\def\OO{\mathcal{O}}
\def\ff{\frak}
\def\Spec{\mbox{\rm Spec}}
\def\Kr{{\rm Kr}}
\def\Max{\mbox{\rm Max}}
\def\patch{\mbox{\rm patch}}
\def\gen{\mbox{\rm gen}}
\def\inv{{\rm inv}}
\def\pt{\mbox{\rm pt}}
\def\cl{\mbox{\rm cl}}
\def\cal{\mathcal}
\def\X{{\ff X}}
\begin{document}

\begin{frontmatter}



\title{Affine schemes and topological closures in the Zariski-Riemann space of valuation rings}



\author{Bruce Olberding}

\address{Department of Mathematical Sciences, New Mexico State University,
Las Cruces, NM 88003-8001}

\ead{olberdin@nmsu.edu}

\begin{abstract}
     Let $F$ be a field, let $D$ be a subring of $F$, and let ${\ff X}$ be the Zariski-Riemann space of valuation rings containing $D$ and having quotient field $F$. 
We consider  the Zariski, inverse and patch topologies on ${\ff X}$   when viewed as a projective limit of projective integral schemes having function field contained in $F$, and we characterize the locally ringed subspaces of $\X$ that are affine schemes.   
\end{abstract}

\begin{keyword}
Zariski-Riemann space \sep valuation ring \sep projective model \sep spectral space \sep Pr\"ufer domain 

\MSC[2010]  13A18 \sep 13B22 \sep  
13F05 \sep 14A15

\end{keyword}

\end{frontmatter}

\newtheorem{theorem}{Theorem}[section]
\newtheorem{lemma}[theorem]{Lemma}
\newtheorem{proposition}[theorem]{Proposition}
\newtheorem{corollary}[theorem]{Corollary}
\newtheorem{example}[theorem]{Example}
\newtheorem{remark}[theorem]{Remark}
\newtheorem{definition}[theorem]{Definition}


\section{Introduction}

Throughout this article, $F$ is a field, $D$ is a subring of $F$ and  
 ${\ff X}$ denotes the collection of all valuation rings between $D$ and $F$ having quotient field $F$.  For each subset $S$ of $F$, denote by $\X_S$ the set of all valuation rings in ${\ff X}$ containing $S$.  
 The set ${\ff X}$ is  endowed with the topology whose basic open sets are of the form $\X_S$, where $S$  is a finite subset of $F$.  The topological space ${\ff X}$ is the
{\it Zariski-Riemann space} of $F/D$ (sometimes called the generalized Riemann manifold, Riemann-Zariski space, Zariski-Riemann manifold, abstract Riemann surface,  or  Riemann variety), and the topology thus defined is the {\it Zariski topology} on ${\ff X}$.  Zariski showed that ${\ff X}$ is quasicompact and used this as a step in the proof of  resolution of singularities of algebraic surfaces in characteristic $0$  by reducing  an infinite resolving system to a finite one \cite{ZarCom}.  But  ${\ff X}$ itself can be viewed as a geometric object.  It is a locally ringed space with structure sheaf $\OO_{\ff X}$ defined by 
 $\OO_{\X}({U})=\bigcap_{V \in {U}}V$ for each nonempty open subset ${U}$ of ${\ff X}$.  Moreover, as a locally ringed space, ${\ff X}$ is the projective limit of the projective models of $F/D$, those projective integral schemes over $\Spec(D)$ whose function field is a subfield of $F$.   Thus, while generally not a scheme, ${\ff X}$ is a projective  limit of projective schemes, and the valuation rings in ${\ff X}$ can be  used to track generic points of closed subschemes in blowups of projective models of $F/D$; this is the point of view taken in Zariski's theory of birational correspondence \cite{ZarBir}. 

The aim of this article is to   
  develop from a basic point of view some of the topological   features   of the Zariski-Riemann space as a projective limit of projective models.   We often do not assume anything more about $D$ other than that it  is a subring of $F$, and sometimes that $\Spec(D)$ is a Noetherian space, so as to assure that projective models of $F/D$ are Noetherian spectral spaces.  
The focus  is the interplay between the topology of $\X$ and that of the projective models of $F/D$.  
 By the ``topology'' of a subspace $Z$ of  ${\ff X}$ we mean not only the  topology on $Z$  induced by the Zariski topology, but the patch and inverse topologies also.  In this way, our approach to the topological nature of $\X$ is 
  influenced by the article \cite{FFL}, where the authors consider these three topologies also.  
  Whereas in \cite{FFL} the authors use the patch topology (and a  nice interpretation of the patch topology in terms of ultrafilter limits of valuation rings; see Remark~\ref{uf remark}(1) below) as a unifying theme, our preference is for the inverse topology because of its application to classifying affine schemes in $\X$ (see Section 6), as well as for describing irredundant representations, as is done in \cite{OlbTopIrr}.  However, as noted in Proposition~\ref{spectral basics} and in \cite[Remark 2.2]{FFL}, inverse closure is simply the composition of patch closure with closure under generalizations.    
  As in \cite{FFL}, we also emphasize the Kronecker function ring construction from multiplicative ideal theory for representing ${\ff X}$ as the prime spectrum of a ring.   
In particular,  in Section 4 we use this construction to exhibit  an affine scheme that maps onto ${\ff X}$ via a morphism of locally ringed spaces that is, by a theorem of Dobbs and Fontana, a homeomorphism on the underlying spaces.  The ring of global sections of this scheme is a Pr\"ufer domain that encodes the valuation theory of ${\ff X}$ into the prime spectrum of a ring. In particular, as proved in \cite{DFF} and \cite{DF}, $\X$ is a spectral space. 
  
 In Section 2 we develop some basic properties regarding patch closure and inverse closure in a spectral space.  In 
 Section 3, we discuss Zariski's representation of $\X$ as a projective limit of projective models. One of the main consequences that we draw from this is that when the base ring $D$ is Noetherian, then $\X$ is a projective limit of Noetherian spectral spaces. As discussed in Section 2, the patch and inverse closures are more transparent on Noetherian spectral spaces, so this representation of $\X$ as a projective limit is helpful  for clarifying the patch and inverse closures; in particular, in Corollary~\ref{fundamental cor}, we show how patch and inverse closures are determined by the images of the domination maps from $\X$ to the approximating projective models. As an application, we describe some patch dense subsets of $\X$. 
  In Section 5, we take a different point of view and describe the inverse closure of a subset of $\X$ in terms of its image in the affine scheme represented by the prime spectrum of the Kronecker function ring of $F/D$.  This allows us to characterize in Theorem~\ref{affine scheme} the subspaces of $\X$ that are affine schemes when viewed with the locally ringed space structure    inherited from $\X$.  
  
  While this article is focused on the topology of 
the Zariski-Riemann space, in future articles we apply these ideas to describing  properties  of irredundant intersections of valuation rings and the geometry of integrally closed rings.

\medskip

{\it Conventions.}  All rings are commutative and contain an identity.  
  An {\it overring} of a domain $R$  is a ring between $R$ and its quotient field.  The set of prime ideals of the ring $R$ is denoted $\Spec(R)$; the set of maximal ideals by $\Max(R)$.  When relevant, $\Spec(R)$ denotes not only a set but  an  affine scheme, and sometimes a submodel of a projective model, but these different uses should always be clear from context.    
As noted above, we  
 write ${\ff X}_R$ for $\{V \in {\ff X}: R \subseteq V\}$, so that $\X_R$ is the subspace of ${\ff X}$ consisting of the valuation rings in ${\ff X}$ between $R$ and $F$.
As discussed in Section 3, we do not assume that a projective model of $F/D$ has function field $F$.  Explanations of other  variations on traditional terminology can be found in the following places:  
generic point of a not-necessarily-closed subset of a spectral space (after Corollary~\ref{very new cor});
affine subset of $\X$ (beginning of \S 4); 
dominant system of projective models (end of \S 3); $X(Z) = $ the image of a subset $Z$ of $\X$ in the projective model $X$ (\S 3). 

\section{Spectral spaces}

A topological space $X$ is a {\it spectral space} if $X$ is quasicompact and $T_0$;  the quasicompact open 
subsets of $X$ are closed under finite intersection and form an open basis; and every 
nonempty irreducible closed subset of $X$ has a generic point.  By a theorem of Hochster, these are precisely the topological spaces which arise as the prime spectrum of a commutative ring \cite{Hoc}. As we recall in the next  section, ${\ff X}$ and the projective models of $F/D$ are spectral spaces. Thus the topological notions developed in this section for spectral spaces will apply to these two cases.  
Although we consider several topologies on a spectral space, to minimize confusion we introduce notation for  operators which we use to distinguish certain subsets of $X$.  Where possible we use these operators rather than shift between topologies. These operators are defined in terms of standard topological notions on $X$.
\begin{eqnarray*}
\cl(Y) & = & {\mbox{intersection of all closed sets containing }} Y.\\
\inv(Y) & = & {\mbox{intersection of all quasicompact open sets containing }} Y. \\
\gen(Y) & = & {\mbox{intersection of all  open sets containing }} Y. \\
\patch(Y) & = & {\mbox{intersection of all subsets between }} Y {\mbox{ and }}  X {\mbox{ of the form }} U_1 \cup (X {\smallsetminus} U_2), \\
&  \: & 
 {\mbox{where }} U_1, U_2 {\mbox{ are  quasicompact open sets of }} X.\\
\pt(Y) & = &  {\mbox{the points of }} \inv(Y)  {\mbox{ closed in the subspace topology}} . \\
%
\end{eqnarray*}

Thus $\cl(Y)$ is the closure of $Y$ in $X$. 
 Both $\inv(Y)$ and $\patch(Y)$ can also be interpreted as closures in appropriate topologies; namely, the subset $Y$ of $X$ is 
{\it patch closed} if $Y = \patch(Y)$; $Y$ is {\it inverse closed } if $Y = \inv(Y)$.    
The {\it inverse topology} on the spectral space $X$ is the topology whose closed sets are the inverse closed subsets of $X$, while 
 the {\it patch topology} on $X$ has for closed sets the patch closed subsets of $X$.   The patch topology gives $X$ the structure 
 of   a zero-dimensional compact Hausdorff space \cite[p.~72]{John}.    The set $\gen(Y)$ is the closure of $Y$ under generalizations, and while it too defines a topology, we will not have occasion to use this topology.  (Recall that if $x,y \in X$ and $x \in \cl(\{y\})$, then $y$ is a {\it generalization} of $x$ and $x$ is a {\it specialization} of $y$.)  
 
 A subset $Y$ of $X$ is {\it inverse open} if its complement in $X$ is inverse closed; $Y$ is {\it patch open} if its complement in $X$ is patch closed.  
It is clear that closed subsets of $X$ and  inverse closed subsets of $X$ are  patch closed, and hence   the patch topology refines  the spectral topology on $X$ and  its inverse topology.
   Both of these topologies were used by Hochster in \cite{Hoc}, who proved that the inverse and patch topologies are again spectral. (Hochster did not give a name to the inverse topology; this  terminology  was evidently introduced  by Schwartz in \cite{Sch}.) 
    More precisely,  let $Y$ and $X$ be spectral spaces. Then a map $f:Y\rightarrow X$ is {\it spectral}  if it is continuous and the preimages of quasicompact open subsets are quasicompact. 
The patch closed subsets of $X$ are precisely those subsets $Y$ of $X$ that are spectral in the subspace topology and for which $Y \cap U$ is quasicompact in $Y$ for all quasicompact open subsets $U$ of $X$ (i.e., the inclusion mapping $Y \rightarrow X$ is a spectral map) \cite[p.~45]{Hoc}.

\begin{proposition} \label{spectral basics}
The following statements hold for  a subset $Y$ of the spectral space $X$.  
\begin{itemize}
\item[{\em (1)}]  $\inv(Y)$ and $\patch(Y)$ are spectral spaces in the subspace topology of $X$.
\item[{\em (2)}] $\gen(Y) \subseteq \inv(Y)$ and $\:  \pt(Y) \subseteq \patch(Y) \subseteq \inv(Y) \cap \cl(Y)$.

\item[{\em (3)}]  $\inv(Y) = \gen(\pt(Y)) = \gen(\patch(Y))$.  
\end{itemize}

\end{proposition}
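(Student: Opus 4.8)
The plan is to prove the three items in the order (1), the two formal containments of (2), then (3), returning last to the remaining containment of (2), since that inclusion will drop out of the identities in (3). I would begin by recording two structural facts about the operator $\inv$. First, it is a closure operator: an intersection of sets containing $Y$ contains $Y$, so $Y \subseteq \inv(Y)$. Second, $\inv(Y)$ is itself inverse closed, hence $\inv$ is idempotent: every quasicompact open containing $Y$ contains $\inv(Y)$, so the quasicompact opens containing $\inv(Y)$ are exactly those containing $Y$, whence $\inv(\inv(Y)) = \inv(Y)$. Granting this, part (1) is immediate from the material preceding the proposition. The set $\patch(Y)$ is an intersection of sets of the form $U_1 \cup (X \smallsetminus U_2)$ and is therefore patch closed; and $\inv(Y)$, being inverse closed, is also patch closed. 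By the quoted characterization of patch closed sets (the inclusion into $X$ is a spectral map), both are spectral in the subspace topology.

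Next I would dispatch the formal containments in (2). Since quasicompact opens are in particular open, $\inv(Y)$ is an intersection over a smaller family than $\gen(Y)$, giving $\gen(Y) \subseteq \inv(Y)$. Both $\cl(Y)$ and $\inv(Y)$ are patch closed sets containing $Y$, so each contains the patch closure; this yields $\patch(Y) \subseteq \inv(Y) \cap \cl(Y)$. The third containment $\pt(Y) \subseteq \patch(Y)$ I would postpone.

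The core of the argument is (3), and the main obstacle is the reverse inclusion $\inv(Y) \subseteq \gen(\patch(Y))$, where quasicompactness must be converted into something usable. I would first reduce to the patch closure: from $Y \subseteq \patch(Y) \subseteq \inv(Y)$ together with monotonicity and idempotency of $\inv$ one gets $\inv(\patch(Y)) = \inv(Y)$. It then suffices to show that $\inv(P) = \gen(P)$ for any patch closed $P$. The containment $\gen(P) \subseteq \inv(P)$ is the formal one above. For the reverse, given $w \in \inv(P)$ and an open set $U \supseteq P$, I would write $U$ as a union of quasicompact opens (these form a basis), use that $P$ is patch compact and that quasicompact opens are patch clopen to extract a finite subcover whose union $V$ is a single quasicompact open with $P \subseteq V \subseteq U$; then $w \in \inv(P) \subseteq V \subseteq U$, so $w$ lies in every open neighborhood of $P$, i.e. $w \in \gen(P)$. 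Applying this to $P = \patch(Y)$ gives $\inv(Y) = \inv(\patch(Y)) = \gen(\patch(Y))$.

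For the identity with $\pt$, I would use that $\inv(Y)$ is spectral by part (1) and hence has enough closed points: every point specializes to a closed point of $\inv(Y)$. A point $w \in \inv(Y)$ thus admits a specialization $z \in \pt(Y)$ with $z \in \cl_X(\{w\})$, showing $w \in \gen(\pt(Y))$; conversely $\pt(Y) \subseteq \inv(Y)$ and inverse closed sets are stable under generalization (if $x \in \inv(Y)$ and $x \in \cl(\{y\})$, then each quasicompact open $U \supseteq Y$ contains $x$ and, being open, contains $y$, so $y \in \inv(Y)$), giving $\gen(\pt(Y)) \subseteq \gen(\inv(Y)) = \inv(Y)$. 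Hence $\inv(Y) = \gen(\pt(Y)) = \gen(\patch(Y))$. Finally, the deferred containment of (2) is now cheap: if $z \in \pt(Y)$, then $z \in \inv(Y) = \gen(\patch(Y))$, so some $z' \in \patch(Y)$ satisfies $z' \in \cl_X(\{z\})$; as $z' \in \inv(Y)$ and $z$ is closed in $\inv(Y)$, we get $z' = z$, so $z \in \patch(Y)$. I expect the only genuinely delicate points to be the existence of closed points in the spectral space $\inv(Y)$ and the patch-compactness extraction of a finite subcover, which is precisely where quasicompactness does the real work.
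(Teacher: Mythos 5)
Your proof is correct, and all the individual steps check out: the idempotency of $\inv$, the reduction $\inv(Y)=\inv(\patch(Y))$, the finite-subcover argument showing $\inv(P)=\gen(P)$ for patch closed $P$, the use of closed points of the spectral space $\inv(Y)$ to get $\inv(Y)=\gen(\pt(Y))$, and the stability of $\inv(Y)$ under generalization. Where you differ from the paper is in the logical order of the two nontrivial facts. The paper proves $\pt(Y)\subseteq\patch(Y)$ \emph{directly}, by a compactness argument inside $\inv(Y)$: given a set $U\cup V$ sandwiched between $Y$ and $\inv(Y)$ with $U$ quasicompact open and $V$ the complement of a quasicompact open, it shows $U\cup V$ is quasicompact and meets every nonempty inverse open subset of $\inv(Y)$, and then uses that a closed point $x$ of $\inv(Y)$ is an intersection of such inverse open sets to force $x\in U\cup V$. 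With that in hand, $\inv(Y)=\gen(\patch(Y))$ falls out of $\inv(Y)=\gen(\pt(Y))$ by sandwiching $\pt(Y)\subseteq\patch(Y)\subseteq\inv(Y)$. You reverse the dependency: you first establish $\inv(P)=\gen(P)$ for patch closed $P$ by extracting a finite subcover of the patch-compact set $P$ from a basic open cover — which is essentially the implication (1) $\Rightarrow$ (2) of the paper's Proposition~\ref{new qc}, front-loaded and specialized to patch closed sets — and then deduce $\pt(Y)\subseteq\patch(Y)$ as a corollary of $\inv(Y)=\gen(\patch(Y))$ together with closedness of points of $\pt(Y)$ in $\inv(Y)$. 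Your route has the merit of isolating a reusable lemma (quasicompact sets satisfy $\inv=\gen$) that the paper in any case needs later; the paper's route keeps the proposition self-contained and makes the inclusion $\pt(Y)\subseteq\patch(Y)$ available before any statement about $\gen(\patch(Y))$ is asserted. There is no circularity in your version since $\inv(Y)=\gen(\patch(Y))$ is obtained independently of the deferred inclusion.
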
  
 
 \begin{proof} 
 That $\patch(Y)$ is a spectral space in the subspace topology is discussed above.    Since an inverse closed subset is patch closed, it follows that $\inv(Y)$ is also a spectral space in the subspace topology. 
 To prove (2), first observe that it is 
clear from the relevant definitions that $\gen(Y) \subseteq  \inv(Y)$ and  $\patch(Y) \subseteq \inv(Y) \cap \cl(Y)$.   
  We  claim  that 
  $\pt(Y) \subseteq \patch(Y)$.  Since $\patch(Y) \subseteq \inv(Y)$ and $\inv(Y)$ is patch closed, it suffices to show that $\pt(Y)$ is a subset of the patch closure of $Y$ in $\inv(Y)$.  
Let ${U}$ be a quasicompact open subset of $\inv(Y)$, and let ${V}$ be the complemement of a quasicompact open subset of $\inv(Y)$.  Since $\inv(Y)$ is quasicompact  (this follows from (1)), so  
 is the  closed subset ${V}$, and hence ${U} \cup {V}$, as a union of two quasicompact subsets, is quasicompact.  Since $Y \subseteq {U} \cup {V} \subseteq \inv(Y)$, it follows that ${U} \cup {V}$ meets every inverse open subset of $\inv(Y)$.  Each  $x \in \pt(Y)$ is a  closed point in $\inv(Y)$, so $\{x\}$ is an intersection of complements of quasicompact open subsets of $\inv(Y)$.  
   However, the intersection of finitely many of any of these inverse open (and spectral closed) subsets meets ${U} \cup {V}$, so since ${U} \cup {V}$ is quasicompact, this forces $x \in {U} \cup {V}$, proving that $\pt(Y) \subseteq \patch(Y)$. This proves (2).

To prove (3), note that since $\inv(Y)$ is a spectral space, then for each $y \in \inv(Y)$, 
   there exists a closed point $x \in \inv(Y)$ such that $x \in \cl(\{y\})$.  Hence  $\inv(Y) \subseteq \gen(\pt(Y))$, and since the reverse inclusion is clear, $\inv(Y) = \gen(\pt(Y))$. Moreover, since by (2), $\pt(Y) \subseteq \patch(Y) \subseteq \inv(Y)$, it follows then that $\inv(Y) = \gen(\patch(Y))$.  
 \end{proof} 
 
 That $\inv(Y) = \gen(\patch(Y))$  is also noted in \cite[Remark 2.2]{FFL}.




If $Y$ is a subspace of the spectral space $X$ and not every point in  $Y$ is closed, then $Y \not \subseteq \pt(Y)$. However,  when $Y$ is quasicompact, then $\pt(Y) \subseteq Y$, and also in this  case,  $\inv(Y)$ collapses to  $\gen(Y)$:    
 
\begin{proposition} \label{new qc} The following are equivalent for a subspace $Y$ of the spectral space $X$.  
\begin{itemize}
\item[{\em (1)}]  $Y$ is quasicompact.
\item[{\em (2)}]  $\inv(Y) = \gen(Y)$.
\item[{\em (3)}] $\pt(Y) \subseteq Y$.  
\end{itemize}
\end{proposition}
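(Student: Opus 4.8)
The plan is to prove the cyclic chain of implications $(1)\Rightarrow(2)\Rightarrow(3)\Rightarrow(1)$, leaning on Proposition~\ref{spectral basics} throughout. A preliminary observation that streamlines the first two implications is that, as in any topological space, $\gen(Y)$ coincides with the set of generalizations of points of $Y$; that is, $x\in\gen(Y)$ if and only if $\cl(\{x\})\cap Y\neq\emptyset$. One inclusion is immediate from the definition of $\gen(Y)$ as the intersection of the open sets containing $Y$, and for the other I would, given $x$ with $\cl(\{x\})\cap Y=\emptyset$, cover $Y$ by open sets omitting $x$ to build an open set containing $Y$ but not $x$.

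For $(1)\Rightarrow(2)$, since $\gen(Y)\subseteq\inv(Y)$ always holds by Proposition~\ref{spectral basics}(2), it suffices to prove $\inv(Y)\subseteq\gen(Y)$. If $x\notin\gen(Y)$, there is an open set $U\supseteq Y$ with $x\notin U$; writing $U$ as a union of quasicompact open sets and invoking the quasicompactness of $Y$, I would extract finitely many whose union $W$ is a quasicompact open set with $Y\subseteq W\subseteq U$, whence $x\notin W$ and so $x\notin\inv(Y)$. This uses only that finite unions of quasicompact open sets are quasicompact and that such sets form a basis. For $(2)\Rightarrow(3)$, take $x\in\pt(Y)$; by Proposition~\ref{spectral basics}(2) and the hypothesis, $x\in\inv(Y)=\gen(Y)$, so by the preliminary observation there is $y\in Y$ with $y\in\cl(\{x\})$. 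Since $y\in Y\subseteq\inv(Y)$, the point $y$ lies in the closure of $\{x\}$ computed in the subspace $\inv(Y)$; but $x$ is by definition closed in $\inv(Y)$, forcing $y=x$ and hence $x\in Y$.

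The implication $(3)\Rightarrow(1)$ is where the real work lies. Here I would use that $\inv(Y)$ is spectral, and in particular quasicompact, by Proposition~\ref{spectral basics}(1). Given a cover of $Y$ by quasicompact open sets $\{U_i\}$ of $X$ (which suffices to test quasicompactness, as these form a basis), consider $C=\inv(Y)\smallsetminus\bigcup_i U_i$, a closed subset of $\inv(Y)$. If $C$ were nonempty, then, being closed in a spectral space, $C$ would contain a point closed in $\inv(Y)$, i.e.\ a point of $\pt(Y)$; by hypothesis this point lies in $Y\subseteq\bigcup_i U_i$, a contradiction. Thus the $U_i$ cover the quasicompact space $\inv(Y)$, a finite subcollection suffices, and that subcollection already covers $Y$.

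The crux, and the step I expect to be the main obstacle, is the spectral-space fact invoked in $(3)\Rightarrow(1)$: that a nonempty closed subset of $\inv(Y)$ contains a point that is closed in $\inv(Y)$. I would justify this by recalling that closed subspaces of spectral spaces are spectral and that every nonempty spectral space has a closed point (equivalently, the prime spectrum of a nonzero ring has a maximal ideal), so that a point closed in the closed subset $C$ is in fact closed in the ambient space $\inv(Y)$.
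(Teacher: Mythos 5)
Your argument is correct. The implications $(1)\Rightarrow(2)$ and $(2)\Rightarrow(3)$ follow essentially the same path as the paper: for the first, the paper also extracts a single quasicompact open set $U$ with $Y\subseteq U$ and $x\notin U$ from the quasicompactness of $Y$ (phrased via the auxiliary space $Y\cup\{x\}$ rather than your cleaner contrapositive through $\gen$), and the second is identical. The genuine divergence is in $(3)\Rightarrow(1)$. The paper first upgrades (3) to $\inv(Y)=\gen(\pt(Y))\subseteq\gen(Y)\subseteq\inv(Y)$ using Proposition~\ref{spectral basics}(3), and then invokes the Schwartz--Tressl criterion that $Y$ is quasicompact if and only if $\gen(Y)$ is patch closed, concluding from the patch closedness of $\inv(Y)$. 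You instead give a self-contained covering argument: any cover of $Y$ by quasicompact opens must in fact cover all of $\inv(Y)$, since a point of the (closed, hence spectral) residual set would specialize to a point of $\pt(Y)\subseteq Y$, and then quasicompactness of the spectral space $\inv(Y)$ finishes the job. Your route avoids the external citation at the cost of explicitly using that every nonempty spectral space has a closed point (a fact the paper itself uses elsewhere, e.g.\ in the proof of Proposition~\ref{spectral basics}(3)), and it has the small side benefit of reproving $\inv(Y)=\gen(Y)$ only where needed rather than as an intermediate step; the paper's route is shorter on the page but leans on \cite{ST}.
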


\begin{proof}
(1) $\Rightarrow$ (2)  
By Proposition~\ref{spectral basics}(2), $\gen(Y) \subseteq \inv(Y)$.     Suppose  $x \in \inv(Y) {\smallsetminus} \gen(Y)$.  Then $\cl(\{x\})$  does not meet $Y$, so that $Y$ is an open subset of the quasicompact space $Y \cup \{x\}$. As such, $Y = \bigcup_{\alpha}((Y \cup \{x\}) \cap U_\alpha)$, where $\{U_\alpha\}$ is a collection of quasicompact open subsets of $X$.  Since $Y$ is quasicompact, it follows that 
$Y = (Y \cup \{x\}) \cap U$, where $U$ is a quasicompact open subset of $X$.  But then $Y \subseteq U$ and $x \not \in U$, in contradiction to the assumption that $x \in \inv(Y)$.  Thus $\inv(Y) = \gen(Y)$.  

(2) $\Rightarrow$ (3) Since by (2),  
 $\inv(Y) \subseteq \gen(Y)$, it follows that if $x \in \pt(Y)$, then there exists $y \in Y$ such that $y \in \cl(\{x\})$.  But since $x$ is a closed point  in $\inv(Y)$, this forces $x = y \in Y$.
 
 (3) $\Rightarrow$ (1)  Assuming (3), Proposition~\ref{spectral basics}(2) implies $\inv(Y) = \gen(\pt(Y)) \subseteq \gen(Y) \subseteq \inv(Y)$, and hence $\inv(Y) = \gen(Y)$.  Schwartz and Tressl show in  \cite[Proposition 2.3]{ST} that $Y$ is quasicompact if and only if $\gen(Y)$ is patch closed.  Thus since $\inv(Y)$ is patch closed, we conclude that $\gen(Y)$ is patch closed, and hence $Y$ is quasicompact.
\end{proof}

\begin{corollary} \label{very new cor} A spectral space $X$ is Noetherian if and only if $\inv(Y) = \gen(Y)$ for all (open) subsets $Y$ of $X$.  
\end{corollary}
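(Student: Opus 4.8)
The plan is to reduce the statement to Proposition~\ref{new qc}, which already converts the identity $\inv(Y) = \gen(Y)$ into the assertion that $Y$ is quasicompact, and then to combine this with the standard characterization of Noetherian spaces as precisely those in which every open (equivalently, every) subset is quasicompact. The three conditions at play are: $X$ Noetherian; $\inv(Y) = \gen(Y)$ for all subsets $Y$; and $\inv(Y) = \gen(Y)$ for all open subsets $Y$. I would prove the chain of implications that shows these coincide, being attentive to the asymmetry flagged by the parenthetical ``(open)'': the forward direction naturally yields the identity for all subsets, while the converse needs it only for open subsets.

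For the forward direction, I would assume $X$ is Noetherian and first recall that a Noetherian space has every subset quasicompact: given any $Y$ and an open cover, the family of finite unions of members of the cover has a maximal element $W$ by the ascending chain condition on open sets, and maximality forces every cover member into $W$, so $Y \subseteq W$ is covered by finitely many sets. Once every $Y$ is quasicompact, the implication $(1) \Rightarrow (2)$ of Proposition~\ref{new qc} delivers $\inv(Y) = \gen(Y)$ for every subset $Y$, and in particular for every open $Y$.

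For the converse, I would assume only the weaker hypothesis that $\inv(Y) = \gen(Y)$ holds for all open subsets $Y$. By the implication $(2) \Rightarrow (1)$ of Proposition~\ref{new qc}, every open subset of $X$ is then quasicompact. To conclude that $X$ is Noetherian, I would take an arbitrary ascending chain $U_1 \subseteq U_2 \subseteq \cdots$ of open sets, observe that its union $U$ is open and hence quasicompact, and note that the cover $\{U_n\}$ of $U$ admits a finite, and therefore (by nestedness) a single-member, subcover, so the chain stabilizes. Thus $X$ satisfies the ascending chain condition on open sets, i.e.\ $X$ is Noetherian.

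I do not expect a genuine obstacle here, since the substantive interaction between quasicompactness and the inverse topology is already packaged in Proposition~\ref{new qc}; the only point requiring care is the bookkeeping that makes the parenthetical ``(open)'' meaningful, namely that the converse can be run from the open-subset hypothesis alone via the elementary equivalence between the Noetherian property and quasicompactness of all open subsets. If a cleaner write-up is wanted, I would instead cite the Schwartz--Tressl criterion already invoked in the proof of Proposition~\ref{new qc} to sidestep the ascending-chain argument, but the direct argument above keeps the corollary self-contained.
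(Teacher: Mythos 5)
Your proposal is correct and follows essentially the same route as the paper: both directions reduce to Proposition~\ref{new qc}, with the forward direction using that subspaces of a Noetherian space are quasicompact and the converse using that quasicompactness of all open subsets forces the ascending chain condition. You merely spell out the two standard topological facts that the paper asserts without proof.
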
 

\begin{proof}  If $X$ is Noetherian, then so is any subspace $Y$ of $X$, and hence $Y$ is quasicompact and by the proposition satisfies $\inv(Y) = \gen(Y)$.  Conversely, if $\inv(Y) = \gen(Y)$ for all open subsets $Y$ of $X$, then by the proposition, each open subset is quasicompact.  This then implies that $X$ is a Noetherian space.  
\end{proof}

\begin{remark} {\em  Let $Y$ be a subspace of the spectral space $X$.  
The fact that  $\inv(Y) = \gen(Y)$ when $Y$ is  quasicompact   is a slightly stronger version of \cite[Proposition 2.6]{FFL} and \cite[Proposition 2.3]{ST}, where it is shown that the generic closure of a quasicompact set is patch closed. 
 A subspace $Y$ of $X$ is quasicompact in the inverse topology if and only if $\cl(Y) = \bigcup_{y \in Y} \cl(\{y\})$ \cite[Corollary 2.4]{ST}.  
}
\end{remark}

In Section~\ref{inverse section} we show that an inverse closed subspace of $\X$ admits the structure of a locally ringed space, a fact which ultimately depends on the following proposition.  

\begin{proposition} \label{inverse lrs}  An inverse closed subspace of an irreducible spectral space is irreducible. 
\end{proposition}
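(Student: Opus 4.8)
The plan is to identify the generic point of $X$ and argue that it must survive in any nonempty inverse closed subset, which forces irreducibility. Let $X$ be an irreducible spectral space and let $Y$ be a nonempty inverse closed subspace. Since a spectral space is $T_0$ and $X$ is itself a nonempty irreducible closed subset of $X$, the defining axiom for spectral spaces supplies a generic point $\eta$ of $X$, so that $\cl(\{\eta\}) = X$.

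The crucial observation I would record is that $\eta$ lies in every nonempty open subset of $X$: were some nonempty open set $U$ to omit $\eta$, then $X \smallsetminus U$ would be a proper closed set containing $\eta$, contradicting $\cl(\{\eta\}) = X$. In particular $\eta$ belongs to every nonempty quasicompact open subset of $X$. Now every quasicompact open set $U$ containing $Y$ is nonempty (as $Y \neq \varnothing$), and hence contains $\eta$; since this family is nonempty (it contains $X$) and $\inv(Y)$ is by definition the intersection of all such $U$, I conclude $\eta \in \inv(Y)$. As $Y$ is inverse closed, $\inv(Y) = Y$, so $\eta \in Y$.

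To finish, I would compute the closure of $\{\eta\}$ within the subspace $Y$, which is $\cl(\{\eta\}) \cap Y = X \cap Y = Y$. Thus $\eta$ is a generic point of $Y$; consequently every nonempty open subset of $Y$ contains $\eta$, any two such sets meet, and $Y$ is irreducible.

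I do not expect a genuine obstacle here: the whole argument turns on the single fact that the generic point of an irreducible spectral space sits inside every nonempty quasicompact open set and therefore cannot be deleted by passing to an inverse closed subset. The only delicate point is a convention, since the empty set is inverse closed but is not irreducible; the statement should accordingly be read as concerning nonempty subspaces.
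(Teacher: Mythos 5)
Your proof is correct, and it takes a genuinely different route from the paper's. The paper argues by compactness: writing $Y=\bigcap_\alpha U_\alpha$ as an intersection of quasicompact open sets, it observes that irreducibility of $X$ gives the family $\{U,V\}\cup\{U_\alpha\}$ (for any two quasicompact opens $U,V$ meeting $Y$) the finite intersection property, and then invokes quasicompactness of $X$ in the inverse topology to conclude that $U\cap V\cap Y\neq\emptyset$. You instead exploit the generic point $\eta$ of $X$ directly: since $\eta$ lies in every nonempty open set, it lies in every quasicompact open set containing the nonempty set $Y$, hence $\eta\in\inv(Y)=Y$, and $\cl(\{\eta\})\cap Y=Y$ shows $\eta$ is generic for $Y$. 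Your argument is more elementary (no appeal to spectrality of the inverse topology) and proves something slightly stronger --- every nonempty inverse closed subspace of $X$ contains the generic point of $X$ and inherits it as its own generic point --- whereas the paper's finite-intersection argument is the one that would survive in settings where one wants to avoid invoking the generic point axiom explicitly. Your remark about the empty set is apt: the paper's statement, read literally, has the same edge case, and its own proof likewise implicitly treats the nonempty situation (the hypothesis that $U\cap Y$ and $V\cap Y$ are nonempty is vacuous for $Y=\emptyset$), so reading the proposition as concerning nonempty subspaces is the intended convention.
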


\begin{proof}  Let $Y$ be an inverse closed subspace of the irreducible spectral space $X$.   Let $U$ and $V$ be open sets of $X$ such that $U \cap Y$ and $V \cap Y$ are nonempty. Since ${X}$ is a spectral space, ${X}$ has a basis of quasicompact open subsets, so we may assume that $U$ and $V$ are quasicompact open subsets of ${X}$.   Since $Y$ is inverse closed in ${X}$, there is a collection $\{U_\alpha\}$ of quasicompact open subsets of ${X}$ such that $Y = \bigcap_{\alpha} U_{\alpha}$.  Since ${X}$ is irreducible, the intersection of any two nonempty open subsets in ${X}$ is nonempty, and hence the collection $\{U,V\} \cup \{U_\alpha\}$ has the finite intersection property.  Since a spectral space with the inverse topology is again a spectral space, then ${X}$ is quasicompact in the inverse topology. Therefore, the intersection of inverse closed sets $Y = U \cap V \cap (\bigcap_{\alpha}U_\alpha)$ is nonempty, which proves that $Y$ is irreducible. 
\end{proof}

When $Y$ is  a Noetherian subspace of $X$, then  
  the patch closure can be interpreted in terms of generic points of subsets of $Y$.  
   We say that $x$ is a {\it generic point} for a subset $Y$ of $X$ if $\cl(\{x\}) = \cl(Y)$; i.e., $x$ is the generic point of the closed set $\cl(Y)$.  
Recall that a subset  $Y$ of a topological space $X$ is {\it retrocompact} if $Y \cap U$ is quasicompact for every quasicompact open subset $U$ of $X$ (i.e., the inclusion map $Y \rightarrow X$ is a spectral map).

\begin{proposition} \label{patch Noetherian} Let $Y$ be a subset of the spectral space $X$.  

\begin{itemize}
\item[{\em (1)}]  If $x \in X$ and $x$ is a generic point for some subset of $Y$, then $x \in \patch(Y)$. 

\item[{\em (2)}]  If  $Y$ is retrocompact (which is the case if $Y$ is a Noetherian subspace of $X$), then $$\patch(Y) = \{x \in X:x {\mbox{ is a generic point for some subset of }} Y\}.$$  
\end{itemize}
\end{proposition}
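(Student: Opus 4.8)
The plan is to reformulate membership in $\patch(Y)$ as a neighborhood condition and then exploit the generic point hypothesis. The sets $U_1 \cup (X \setminus U_2)$ in the definition of $\patch$ are exactly the complements of the basic patch-open sets $U_2 \cap (X \setminus U_1)$, with $U_1,U_2$ quasicompact open; since finite intersections of quasicompact opens are quasicompact open and finite unions of quasicompact opens are quasicompact open, these sets form a basis for the patch topology. Hence $x \in \patch(Y)$ if and only if every set of the form $U \cap (X \setminus W)$, with $U,W$ quasicompact open, $x \in U$ and $x \notin W$, meets $Y$. I will use this reformulation throughout (taking $U = X$ or $W = \emptyset$ recovers the degenerate neighborhoods).

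For (1), assume $\cl(\{x\}) = \cl(S)$ for some $S \subseteq Y$ and fix a basic patch-open neighborhood $U \cap (X \setminus W)$ of $x$. The key observation is that the open condition $x \notin W$ forces $\cl(\{x\}) \subseteq X \setminus W$, so that $S \subseteq \cl(S) = \cl(\{x\})$ is disjoint from $W$. On the other hand, since $x \in \cl(S)$ and $U$ is an open neighborhood of $x$, the set $U \cap S$ is nonempty. Any point of $U \cap S$ lies in $U$, avoids $W$, and belongs to $Y$, so the neighborhood meets $Y$; as it was arbitrary, $x \in \patch(Y)$. No hypothesis on $Y$ is used, so this already yields the inclusion $\patch(Y) \supseteq \{x : x \text{ is a generic point for some subset of } Y\}$ needed in (2).

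For (2), it remains to prove the reverse inclusion under retrocompactness. Given $x \in \patch(Y)$, the natural candidate is $S = Y \cap \cl(\{x\})$; since $S \subseteq \cl(\{x\})$ we have $\cl(S) \subseteq \cl(\{x\})$ for free, so the entire content is to show $x \in \cl(S)$, which gives the reverse inclusion and certifies $x$ as a generic point of $S \subseteq Y$. I plan to argue by contradiction: if $x \notin \cl(S)$, choose a quasicompact open $U$ with $x \in U$ and $U \cap S = \emptyset$, that is, $U \cap Y \cap \cl(\{x\}) = \emptyset$. Using the standard description of specialization in a spectral space, $\cl(\{x\}) = \bigcap\{X \setminus W : W \text{ quasicompact open}, x \notin W\}$, this emptiness translates into the statement that the sets $\{W \cap (U \cap Y)\}$, with $W$ quasicompact open and $x \notin W$, form an open cover of $U \cap Y$.

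This is where the main obstacle lies and where retrocompactness is essential: because $U$ is quasicompact open and $Y$ is retrocompact, the subspace $U \cap Y$ is quasicompact, so the cover admits a finite subcover $U \cap Y \subseteq W_1 \cup \cdots \cup W_n$. Putting $W_0 = W_1 \cup \cdots \cup W_n$, a finite union of quasicompact opens is quasicompact open and $x \notin W_0$, so $U \cap (X \setminus W_0)$ is a patch-open neighborhood of $x$; but $U \cap Y \subseteq W_0$ says exactly that this neighborhood misses $Y$, contradicting $x \in \patch(Y)$. Hence $x \in \cl(S)$, completing (2). The delicate point is precisely the passage from the closure condition on $\cl(\{x\})$—an intersection of possibly infinitely many complements of quasicompact opens—to a single quasicompact open $W_0$, and it is the retrocompactness of $Y$ (automatic when $Y$ is a Noetherian subspace, since then every subspace is quasicompact) that licenses this finite reduction.
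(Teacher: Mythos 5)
Your proof is correct and follows essentially the same route as the paper's: part (1) is the same generic-point argument (you phrase it via patch-open neighborhoods of $x$, the paper via the defining sets $U_1\cup(X\smallsetminus U_2)\supseteq Y$, which are complementary formulations), and in part (2) you use the same candidate set $S=Y\cap\cl(\{x\})$, the same contradiction setup from $x\notin\cl(S)$, and the same application of retrocompactness to extract a single quasicompact open $W_0$ (the paper's $U'$) separating $x$ from $Y$ in the patch topology. No gaps.
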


\begin{proof}
 (1) Suppose that $x \in X$ and  $\cl(\{x\}) = \cl({Z})$ for some subset ${Z}$ of $Y$.  We claim that $ x\in \patch(Y)$.   Suppose otherwise.  Then there exists a quasicompact open subset $U$ of $X$, and a subset $V$ of $X$ that is the complement of a quasicompact open subset of $X$, such that $Y \subseteq U \cup V$ but $x \not \in U \cup V$.  If $Z \cap U$ is nonempty, then for any $z$ in this intersection,  $z \in Z \subseteq  \cl(\{x\})$, so that since $z$ is in the open set $U$, so is $x$, a contradiction.  Thus $Z \cap U$ is empty, and hence $Z  \subseteq  V$, so that $x \in \cl(Z) \subseteq V$, again a contradiction.  Therefore, $x \in \patch(Y)$. 
    
(2) 
Suppose that $Y$ is a retrocompact subspace of $X$.     In light of (1), to prove (2) all that needs to be shown is that if $x \in \patch(Y)$, then $x$ is a generic point for some subset of $Y$.  
Let $x \in \patch(Y)$. Then $\cl(\cl(\{x\}) \cap Y) \subseteq \cl(\{x\})$.   We claim that this set inclusion is an equality.  By way of contradiction, 
 suppose this  inclusion is proper, and let $U =  (Y \cup \{x\}) \smallsetminus \cl(\{x\})$ and $V =  \cl(\cl(\{x\}) \cap Y)$.  
Then $Y \subseteq U \cup V$ but $x \not \in U \cup V.$  Since the closed set $V$ is an intersection of complements of quasicompact open subsets of $X$ and $x \not \in V$, there exists a set $V'$ such that $x \not \in V'$, $V \subseteq V'$ and $V'$ is the complement of a quasicompact open subset of $X$. 
Also, since $Y$ is retrocompact,  $Y {\smallsetminus} V' = Y \cap (X {\smallsetminus} V')$ is quasicompact, so since $Y {\smallsetminus} V' \subseteq U$ and $U$ is a union of quasicompact open subsets of $X$, it follows that  there exists a quasicompact open subset $U'$ such that $Y {\smallsetminus} V' \subseteq U' \subseteq U$.    Now $Y \subseteq U' \cup V'$, and since $U' \subseteq U$, then $x \not \in U'$, and by the choice of $V'$, $x \not \in V'$.  Thus $Y \subseteq U' \cup V'$ but $x \not \in U' \cup V'$, which is impossible since $x \in \patch(Y)$.  This shows that every element of $\patch(Y)$ is a generic point for some subset of $Y$.    
\end{proof}


We next apply these notions to projective limits of Noetherian spectral spaces. Theorem~\ref{fundamental} is our main result in this direction, and it follows from two preliminary lemmas.

  \begin{lemma} \label{pick up extra} \label{spectral map cor}  Let $d:Y \rightarrow X$ be a spectral map between spectral spaces, and let $Z$ be a subspace of $Y$.   Then: 
 \begin{itemize}
 
 \item[{\em (1)}]     The mapping $d$ is continuous in the patch and inverse topologies.

 \item[{\em (2)}]  $d(\gen(Z)) \subseteq \gen(d(Z))$, $d(\inv(Z)) \subseteq \inv(d(Z)) $ and $d(\patch({{Z}})) = \patch(d({{Z}}))$. 
 
  \item[{\em (3)}]  If $d$ is a closed map, then  $d(\pt(Z)) = \pt(d(Z)).$

 \item[{\em (4)}] 
Each point in $X$ that is a generic point for a subset of $d({{Z}})$ is in  $d(\patch({{Z}}))$.
 
 
 \item[{\em (5)}] If also $X$ is a Noetherian spectral space, then $\pt(d(Z))$ is the set of points that are closed in the subspace $d(Z)$ of $X$. 
 \end{itemize}  
    
\end{lemma}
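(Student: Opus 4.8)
The plan is to establish the five parts in order, each building on its predecessors and on the results of Section~2. For (1) I would argue from the subbases: the quasicompact open sets of $X$ form a subbasis of closed sets for the inverse topology, and together with their complements they form a subbasis of closed sets for the patch topology. Since $d$ is spectral, $d^{-1}(U)$ is quasicompact open in $Y$ whenever $U$ is quasicompact open in $X$, and $d^{-1}(X\smallsetminus U)=Y\smallsetminus d^{-1}(U)$ is the complement of a quasicompact open set; thus $d^{-1}$ carries each subbasic inverse-closed (resp.\ patch-closed) set to a set of the same kind, giving continuity in both topologies. For the first two inclusions in (2) I would use a one-point argument: if $y$ lies in every open (resp.\ quasicompact open) set containing $Z$ and $U\supseteq d(Z)$ is open (resp.\ quasicompact open), then $d^{-1}(U)$ is an open (resp.\ quasicompact open) set containing $Z$, so $y\in d^{-1}(U)$ and $d(y)\in U$; this yields $d(\gen(Z))\subseteq\gen(d(Z))$ and $d(\inv(Z))\subseteq\inv(d(Z))$. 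For the patch equality, $d(\patch(Z))\subseteq\patch(d(Z))$ is continuity from (1), while the reverse inclusion I would get from compactness: the patch topology is compact Hausdorff, so $\patch(Z)$ is patch-compact, its image $d(\patch(Z))$ is patch-compact and hence patch-closed, and containing $d(Z)$ it must contain $\patch(d(Z))$.

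Part (4) is then immediate from (2): if $x$ is a generic point for a subset of $d(Z)$, then $x\in\patch(d(Z))$ by Proposition~\ref{patch Noetherian}(1), and $\patch(d(Z))=d(\patch(Z))$. For (5), since $X$ is Noetherian, Corollary~\ref{very new cor} gives $\inv(d(Z))=\gen(d(Z))$, so $\pt(d(Z))$ consists of the closed points of the subspace $\gen(d(Z))$. I would reduce this to the purely topological assertion that in any spectral space the closed points of $\gen(W)$ are exactly the closed points of the subspace $W$: a point of $\gen(W)$ is a generalization of some point of $W$, so a closed point of $\gen(W)$ must already lie in $W$, and $T_0$-separation shows that being closed in $W$ and being closed in $\gen(W)$ coincide. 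Taking $W=d(Z)$ gives (5).

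Part (3) is the crux. The inclusion $\pt(d(Z))\subseteq d(\pt(Z))$ I expect to hold without the closedness hypothesis, by the following argument. Given $x\in\pt(d(Z))$, Proposition~\ref{spectral basics}(2) and part (2) let me write $x=d(p)$ with $p\in\patch(Z)\subseteq\inv(Z)$. As $\inv(Z)$ is spectral, $p$ specializes to a closed point $z_0\in\pt(Z)$ of $\inv(Z)$; then $d(z_0)\in\cl(\{d(p)\})\cap\inv(d(Z))=\cl(\{x\})\cap\inv(d(Z))$, and since $x$ is closed in $\inv(d(Z))$ this intersection is $\{x\}$, forcing $x=d(z_0)\in d(\pt(Z))$.

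The reverse inclusion $d(\pt(Z))\subseteq\pt(d(Z))$ is where the closed-map hypothesis must enter, and it is the step I expect to be the main obstacle. Starting from $z\in\pt(Z)$, I would use that a closed spectral map is specializing, so $d(\cl(\{z\}))=\cl(\{d(z)\})$; to conclude that $d(z)$ is closed in $\inv(d(Z))$ I would take an arbitrary $x'\in\cl(\{d(z)\})\cap\inv(d(Z))$, write $x'=d(w)$ with $w\in\cl(\{z\})$, and aim to show $w\in\inv(Z)$, for then $w\in\cl(\{z\})\cap\inv(Z)=\{z\}$ (as $z$ is closed in $\inv(Z)$) and hence $x'=d(z)$. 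The delicate point is precisely this last lifting: one must control the fibres of $d$ over $\inv(d(Z))$ and verify that $\cl(\{z\})\cap d^{-1}(\inv(d(Z)))$ cannot escape $\inv(Z)$. This is where closedness has to do real work, and establishing it — equivalently, that a specialization of $d(z)$ lying in $\inv(d(Z))$ always lifts to a specialization of $z$ staying inside $\inv(Z)$ — is the heart of the matter.
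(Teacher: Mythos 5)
Parts (1), (2), (4) and (5) of your proposal are correct and essentially match the paper's arguments, and your proof of the inclusion $\pt(d(Z))\subseteq d(\pt(Z))$ in (3) is also correct --- indeed it is cleaner than the paper's, which instead extracts a closed point of the fibre $d^{-1}(x)\cap \patch(Z)$ rather than simply specializing $p$ to a closed point of $\inv(Z)$ as you do; you are also right that this direction uses no closedness hypothesis. The genuine gap is the one you flag yourself: the inclusion $d(\pt(Z))\subseteq \pt(d(Z))$ is never established. For comparison, the paper routes the closedness hypothesis through $\patch$ rather than through the specialization-lifting you attempt: a point $z\in\pt(Z)$ is a closed point of $\patch(Z)$ (because $\pt(Z)\subseteq\patch(Z)\subseteq\inv(Z)$); since $d$ is closed, $d(z)$ is asserted to be a closed point of $d(\patch(Z))=\patch(d(Z))$; and a closed point of $\patch(d(Z))$ is a closed point of $\gen(\patch(d(Z)))=\inv(d(Z))$ by a $T_0$ argument, whence $d(z)\in\pt(d(Z))$.

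Your sense that this step is the real obstacle is better founded than you may realize: neither your lifting strategy nor the paper's $\patch$-argument closes it, because the inclusion $d(\pt(Z))\subseteq\pt(d(Z))$ fails for general closed spectral maps. Let $V$ be a DVR with maximal ideal ${\ff m}$ and residue field $k$, let $R=V\times k$, and let $V\rightarrow R: v\mapsto (v,\overline{v})$; this is a finite injective ring map, so $d:\Spec(R)\rightarrow\Spec(V)$ is a closed spectral surjection. Take $Z=\{(0)\times k,\: V\times (0)\}$. Then $Z$ is a quasicompact open subset of $\Spec(R)$, so $\inv(Z)=\patch(Z)=Z$, and both points of $Z$ are closed in $Z$ (the specialization ${\ff m}\times k$ of $(0)\times k$ lies outside $Z$), so $\pt(Z)=Z$; but $d(Z)=\Spec(V)$, whose only closed point is ${\ff m}$, so $d(\pt(Z))=\Spec(V)\not\subseteq\{{\ff m}\}=\pt(d(Z))$. (Irreducible examples exist as well, e.g.\ $\Spec(k[u,t])\rightarrow\Spec(k[u^2,t])$ with $Z=\{(u-1),\:(u+1,t)\}$.) The step of the paper's proof that breaks on this example is precisely ``$z$ closed in $\patch(Z)$ and $d$ closed imply $d(z)$ closed in $d(\patch(Z))$'': closedness gives $\cl(\{d(z)\})=d(\cl(\{z\}))$, but a specialization of $d(z)$ lying in $d(\patch(Z))$ may be the image of a point of $\patch(Z)$ that is not a specialization of $z$. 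So the missing half of (3) cannot be supplied by a cleverer argument; it requires an additional hypothesis, or the statement must be weakened to the one inclusion you did prove.
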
  

\begin{proof} 
(1)  This follows from the relevant definitions.

(2) 
 To see that $d(\gen(Z)) \subseteq \gen(d(Z))$, let $x \in \gen(Z)$. Then there exists $z \in Z$ such that $z \in \cl(\{x\})$. Hence  $d(z) \in d(\cl(\{x\})) \subseteq \cl(\{d(x)\})$, so that $d(x) \in \gen(d(z))$. This shows that $d(\gen(Z)) \subseteq \gen(d(Z))$.  
Also, since $d$ is continuous in the inverse topology, it follows that 
  $d(\inv(Z)) \subseteq \inv(d(Z))$.  Finally, since the patch topology is compact and Hausdorff and a continuous map between compact Hausdorff spaces is closed, the mapping $d$ is closed in the patch topology, and hence $d(\patch({{Z}})) = \patch(d({{Z}}))$.

(3)  Suppose that $d$ is a closed map. 
 To show that $d(\pt(Z)) \subseteq \pt(d(Z))$, let $x \in d(\pt(Z))$.  Then there exists $y \in \pt(Z)$ such that $d(y) = x$.
 By Proposition~\ref{spectral basics}(2), $\pt(Z) \subseteq \patch(Z)$, so $y$ is a closed point in $\patch(Z)$.  
   Since $d$ is a closed map and $y$ is a closed point  in $\patch(Z)$, then $d(y)$ is a closed  in $d(\patch(Z)) = \patch(d(Z))$. It follows then that $d(y)$ is a closed point in $\inv(d(Z))$,  
      and hence $x \in \pt(d(Z))$.  
      
        Conversely, if $x \in \pt(d(Z))$, then $x$ is a closed point  in $\inv(d(Z))$, and hence a closed point in $\patch(d(Z))$. By Lemma~\ref{pick up extra}(2), $\patch(d(Z)) = d(\patch(Z))$, so $x$ is a closed point in $d(\patch(z))$. 
          As such, $d^{-1}(x) \cap \patch(Z)$ is a closed set in $\patch(Z)$.  Since by Proposition~\ref{spectral basics}(1), $\patch(Z)$, and hence $d^{-1}(x) \cap \patch(Z)$, is a spectral space, there exists $y \in \patch(Z)$ such that $y$ is a closed point in the closed subset $d^{-1}(x) \cap \patch(Z)$ of $\patch(Z)$.  Thus $y$ is a closed point in $\patch(Z)$ with $d(y) = x$. Since by Proposition~\ref{spectral basics}(3),  $\inv(Z) = \gen(\patch(Z))$, it follows that $y$ is a closed point in $\inv(Z)$, so that $y \in \pt(Z)$. Therefore, $x = d(y) \in 
           d(\pt(Z))$, which  proves that $\pt(d(Z)) = d(\pt(Z))$.  

(4)  Suppose there exists a subset $Y$ of $Z$ such that $x \in X$ is a generic point for $d(Y)$. Then by Proposition~\ref{patch Noetherian}(1),  $x \in \patch(d(Y))$, and hence by (2), $x \in d(\patch(Y))$.   

(5) 
Let $x$ be a closed point in $ \inv(d(Z))$.  Then  since by assumption $X$ is a Noetherian spectral space, Proposition~\ref{new qc} implies that  $x \in \pt(d(Z)) \subseteq d(Z)$.  Thus $x$ is a closed point in $d(Z)$. Conversely, let $y$ be a closed point in $d(Z)$. Then $y \in \inv(d(Z))$, so  by Proposition~\ref{spectral basics}, since $\inv(d(Z))$ is a spectral space there exists a closed point $w \in \inv(d(Z))$ such that $w \in \cl(\{y\})$.  The preceding argument  shows that $w$ is a closed point in $d(Z)$.  But $y$ is a closed point in $d(Z)$, so since $w \in \cl(\{y\}) \cap d(Z)$, it follows that $y = w$ and $y$  is a closed point in $\inv(d(Z)))$; i.e., $y \in \pt(d(Z))$. 
\end{proof}



The operators  $\patch$ and $\inv$  also behave well with respect to  projective limits.

\begin{lemma} \label{patch thm} 
Let $Y$ be a spectral space that is a projective limit (in the category of spectral spaces with spectral maps) of a projective system ${\ff S}$ of spectral spaces. Let $Z$ be a subset of $Y$. For each $X \in {\ff S}$, let $d_X:Y  \rightarrow X$ denote the projection, and let $X(Z)=d_X(Z)$. 
 Then $\{\patch(X({{Z}})):X \in {\ff S}\}$ and $ \{\inv(X({{Z}})):X \in {\ff S}\}$ form  projective systems with the induced maps. 
As topological spaces, 
$$\patch({{Z}}) = \varprojlim\: \patch(X({{Z}})) \:\:  {\mbox{ and }}  \:\:\: \inv({{Z}}) = \varprojlim\: \inv(X({{Z}})),$$ where in each case $X$ ranges over the spaces in ${\ff S}$. 
%
\end{lemma}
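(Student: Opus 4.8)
The plan is to identify both sides of each equality as concrete subspaces of $Y=\varprojlim{\ff S}$ and compare them directly. The structural fact I will lean on is that, for a directed system, the quasicompact open subsets of $Y$ are exactly the pullbacks $d_X^{-1}(U)$ with $X\in{\ff S}$ and $U$ a quasicompact open subset of $X$: such sets form a basis for $Y$, and a finite union of them collapses to a single one after passing to a space dominating the finitely many indices involved (using that preimages of quasicompact opens under the transition maps are again quasicompact open). Granting this, I first verify that the indicated families are projective systems. If $\phi\colon X'\to X$ is a transition map of ${\ff S}$, then $d_X=\phi\circ d_{X'}$ gives $X(Z)=\phi(X'(Z))$, so Lemma~\ref{pick up extra}(2) yields $\phi(\patch(X'(Z)))=\patch(X(Z))$ and $\phi(\inv(X'(Z)))\subseteq\inv(X(Z))$; hence $\phi$ restricts to the required maps and both systems are well defined. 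Finally, the underlying set of each topological projective limit is the set of compatible families, so $\varprojlim\patch(X(Z))=\bigcap_{X\in{\ff S}}d_X^{-1}(\patch(X(Z)))$ and $\varprojlim\inv(X(Z))=\bigcap_{X\in{\ff S}}d_X^{-1}(\inv(X(Z)))$ as subspaces of $Y$, and it remains to identify these intersections with $\patch(Z)$ and $\inv(Z)$.

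The inverse case is then essentially a computation. Since $\inv(Z)$ is the intersection of all quasicompact open subsets of $Y$ containing $Z$, and since every such set is some $d_X^{-1}(U)$ with $Z\subseteq d_X^{-1}(U)$ exactly when $X(Z)\subseteq U$, I would write
\[
\inv(Z)=\bigcap\{\,d_X^{-1}(U):X\in{\ff S},\ U\text{ quasicompact open in }X,\ X(Z)\subseteq U\,\}=\bigcap_{X\in{\ff S}}d_X^{-1}(\inv(X(Z))),
\]
where the last equality uses that $d_X^{-1}$ commutes with intersections together with the definition of $\inv(X(Z))$. By the previous paragraph this is exactly $\varprojlim\inv(X(Z))$.

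For the patch case I would prove two inclusions, working with the fact that $\patch(Z)$ and each $\patch(X(Z))$ are closures in the compact Hausdorff patch topologies of $Y$ and of $X$. The inclusion $\patch(Z)\subseteq\bigcap_X d_X^{-1}(\patch(X(Z)))$ is immediate from Lemma~\ref{pick up extra}(2), which gives $d_X(\patch(Z))=\patch(X(Z))$ for every $X$. For the reverse inclusion, suppose $y\in Y$ has $d_X(y)\in\patch(X(Z))$ for all $X$ but $y\notin\patch(Z)$. Then $y$ has a basic patch-open neighborhood disjoint from $Z$; each subbasic patch-open of $Y$ has the form $d_X^{-1}(U)$ or $d_X^{-1}(X\smallsetminus W)$ for quasicompact opens $U,W$, so after pulling a finite intersection back to one dominating index this neighborhood becomes $d_X^{-1}(S)$ with $S$ patch-open in $X$. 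Then $S$ is a patch-neighborhood of $d_X(y)$ with $S\cap X(Z)=\emptyset$ --- a point of $Z$ landing in $S$ would lie in $d_X^{-1}(S)$ --- contradicting $d_X(y)\in\patch(X(Z))$. Hence $\bigcap_X d_X^{-1}(\patch(X(Z)))\subseteq\patch(Z)$, completing the identification.

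I expect the main obstacle to be the repeated passage to a single index, which rests entirely on the description of the quasicompact opens of the projective limit as the pullbacks $d_X^{-1}(U)$; this is where directedness of ${\ff S}$ enters, and it drives both the collapse of finite unions in the inverse case and the reduction of a basic patch-neighborhood to the form $d_X^{-1}(S)$ in the patch case. Once that fact is available the inverse case is a direct computation and the patch case a short separation argument in the compact Hausdorff patch space. The set-level equalities finally upgrade to homeomorphisms because on the common underlying set both the subspace topology inherited from $Y$ and the projective-limit topology coincide with the initial topology determined by the projections $d_X$.
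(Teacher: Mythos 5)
Your proposal is correct, but it takes a more hands-on route than the paper. The paper's proof is very short: after checking (exactly as you do, via Lemma~\ref{pick up extra}(2)) that the two families form projective systems, it observes that $\patch(-)$ and $\inv(-)$ are closure operators for the patch and inverse topologies, that the projections $d_X$ are continuous for those topologies, and then invokes Bourbaki's corollary that the closure of a subset of a projective limit equals the projective limit of the closures of its images. That citation tacitly uses that the patch (resp.\ inverse) topology on $Y$ coincides with the limit of the patch (resp.\ inverse) topologies on the $X$'s, and that the index set is directed --- which is precisely the content you make explicit through your description of the quasicompact opens of $Y$ as the pullbacks $d_X^{-1}(U)$. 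Granting that description (standard for a directed limit of spectral spaces along spectral maps, and correctly sketched in your first paragraph), your direct computation of $\inv(Z)$ as $\bigcap_X d_X^{-1}(\inv(X(Z)))$ and your separation argument in the compact Hausdorff patch topology are both sound, and your closing remark correctly identifies the subspace and limit topologies as the same initial topology. What the paper's approach buys is brevity and a single reusable reference; what yours buys is self-containedness and a clear accounting of exactly where directedness of ${\ff S}$ and spectrality of the projections enter --- worth noting, since the lemma as stated says only ``projective system,'' and both arguments really need the system to be directed, as it is in the paper's applications to dominant systems of projective models.
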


\begin{proof} If $d:X_1 \rightarrow X_2$ is a map in the projective system ${\ff S}$, then Lemma~\ref{pick up extra} implies that $d(\patch(X_1(Z))) = \patch(X_2(Z))$ and $d(\inv(X_1(Z))) \subseteq \inv(X_2(Z))$. 
It follows that $\{\patch(X({{Z}})):X \in {\ff S}\}$ and $ \{\inv(X({{Z}})):X \in {\ff S}\}$ form  projective systems with the induced maps.  
Now 
 $\patch(-)$ is a closure operator in the patch topology and the projection mappings are continuous in the patch topology (Lemma~\ref{pick up extra}(1)), so $\patch(Z)$ is the projective limit of the spectral spaces $\patch(X(Z))$  \cite[Corollary, p.~49]{BouTop}. Similarly, since $\inv(-)$ is a closure operator in the inverse topology and the projections mappings are continuous in the inverse topology (again by Lemma~\ref{pick up extra}(1)), then $\inv(Z)$ is the projective limit of the $\inv(X(Z))$.        
\end{proof}

The lemmas lead to the main theorem of the section, a characterization of $\inv$, $\patch$ and $\pt$  in the case where the spectral spaces in ${\ff S}$ are Noetherian. This theorem will be applied to the Zariski-Riemann space in Corollary~\ref{fundamental cor}. 

\begin{theorem}  \label{fundamental}
With the assumptions of Lemma~\ref{patch thm}, suppose in addition that each space in $ {\ff S}$ is  Noetherian. 
 Then:
\begin{eqnarray*}
 \inv(Z) & = &\{y \in Y:\forall X \in {\ff S}, \:\: d_X(y) {\mbox{ specializes to a point in }}X(Z)\} \\
 \patch(Z) &  = &\{y \in Y:\forall X \in {\ff S}, \:\: d_X(y)  {\mbox{ is  a generic point of a subset of }} X(Z)\} 
 \end{eqnarray*} If also each $d_X$ is a closed map, then 
$$  \pt(Z)  =  \{y \in Y:\forall X \in {\ff S}, \:\: d_X(y) {\mbox{ is  a closed point in }} X(Z)\}.   $$ 

 \end{theorem}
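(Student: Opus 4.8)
The plan is to read off the formulas for $\inv(Z)$ and $\patch(Z)$ directly from Lemma~\ref{patch thm}, which already gives $\inv(Z) = \varprojlim \inv(X(Z))$ and $\patch(Z) = \varprojlim \patch(X(Z))$, and then to describe each factor using the Noetherian hypothesis. For the inverse topology, Corollary~\ref{very new cor} yields $\inv(X(Z)) = \gen(X(Z))$ for every $X \in {\ff S}$, and $\gen(X(Z))$ is precisely the set of points of $X$ that specialize to a point of $X(Z)$. For the patch topology, every subset of the Noetherian space $X$ is quasicompact, hence retrocompact, so Proposition~\ref{patch Noetherian}(2) identifies $\patch(X(Z))$ with the set of $x \in X$ that are generic points of subsets of $X(Z)$.

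To convert these projective-limit identities into the stated set descriptions, I would use that a point $y$ of the projective limit $Y$ is determined by its compatible family of images $(d_X(y))_{X \in {\ff S}}$, so that the projective limit of a compatible family of subspaces $A_X \subseteq X$ is realized inside $Y$ as $\{y \in Y : d_X(y) \in A_X \mbox{ for all } X \in {\ff S}\}$. Since Lemma~\ref{patch thm} shows that $\{\inv(X(Z))\}$ and $\{\patch(X(Z))\}$ are subsystems, taking $A_X = \inv(X(Z))$ and $A_X = \patch(X(Z))$ respectively turns the two identities into the first two formulas of the theorem.

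The formula for $\pt(Z)$ is the main obstacle, since $\pt$ is not a closure operator and so does not pass to the projective limit by the argument used for $\inv$ and $\patch$ in Lemma~\ref{patch thm}; I would instead prove the two inclusions by hand. Write $C_X$ for the set of points closed in the subspace $X(Z)$. For the inclusion ``$\subseteq$'', if $y \in \pt(Z)$ then, because $d_X$ is a closed map, Lemma~\ref{pick up extra}(3) gives $d_X(\pt(Z)) = \pt(X(Z))$, while Lemma~\ref{pick up extra}(5) identifies $\pt(X(Z)) = C_X$; hence $d_X(y) \in C_X$ for all $X$. For the reverse inclusion, suppose $d_X(y) \in C_X$ for every $X$. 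Then each $d_X(y)$ lies in $X(Z) \subseteq \gen(X(Z)) = \inv(X(Z))$, so by the description of $\inv(Z)$ established above $y \in \inv(Z)$; it then remains to verify that $y$ is closed in the subspace $\inv(Z)$.

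The heart of the matter is this last closedness check, and I expect it to be the delicate step. Suppose, toward a contradiction, that some $y'' \in \inv(Z)$ with $y'' \neq y$ lies in $\cl(\{y\})$. Fixing $X$, continuity gives $d_X(y'') \in \cl(\{d_X(y)\})$, and since $d_X(y'') \in \inv(X(Z)) = \gen(X(Z))$ there is a point $w \in X(Z)$ with $w \in \cl(\{d_X(y'')\}) \subseteq \cl(\{d_X(y)\})$. As $d_X(y)$ is closed in $X(Z)$, the set $\cl(\{d_X(y)\}) \cap X(Z)$ reduces to $\{d_X(y)\}$, forcing $w = d_X(y)$, and hence $d_X(y) \in \cl(\{d_X(y'')\})$. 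Thus $d_X(y)$ and $d_X(y'')$ specialize to one another, so by the $T_0$ property of the spectral space $X$ they coincide. Since this holds for every $X$ and distinct points of $Y$ have distinct families of images, we get $y = y''$, a contradiction. Therefore $y$ is closed in $\inv(Z)$, giving $y \in \pt(Z)$ and completing the third formula.
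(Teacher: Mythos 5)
Your proposal is correct and follows essentially the same route as the paper: both reduce to the projective-limit identities of Lemma~\ref{patch thm}, identify $\inv(X(Z))=\gen(X(Z))$ and describe $\patch(X(Z))$ via Proposition~\ref{patch Noetherian} using the Noetherian hypothesis, and handle $\pt(Z)$ separately with Lemma~\ref{pick up extra}(3). The only real difference is that your $T_0$/specialization argument spells out in full the final step --- that a point whose images are all closed in the $X(Z)$ is itself closed in $\inv(Z)$ --- which the paper asserts more tersely by appealing to $\inv(Z)=\varprojlim\inv(X(Z))$; this is a welcome filling-in of detail rather than a change of method.
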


\begin{proof}
To prove the first assertion, let $y \in \inv(Z)$, and let $X \in {\ff S}$.  Then by Lemma~\ref{pick up extra}(2),  $d_X(y) \in X(\inv(Z)) \subseteq \inv(X(Z))$. Since $X$ is Noetherian, so is every subspace of $X$, and hence $X(Z)$ is quasicompact.   Thus by Proposition~\ref{new qc}, $d_X(y) \in X(\inv(Z)) \subseteq  \inv(X(Z)) = \gen(X(Z))$, and hence
$d_X(y)$ specializes to a point in $X(Z)$.  
 Conversely, suppose that $y \in Y$ has the property that for each $X \in {\ff S}$, $d_X(y)$ specializes to a point in $X(Z)$.  Then $d_X(y) \in \gen(X(Z)) =\inv(X(Z))$ for each $X \in {\ff S}$. Thus   
    $y$ is in  $\varprojlim\inv(X(Z))$  \cite[Corollary, p.~49]{BouTop}, so that by Lemma~\ref{patch thm}, $y \in \inv(Z)$.   
 
 Next, let $y \in \patch(Z)$, and let $X \in {\ff S}$.  By Lemma~\ref{pick up extra}(2), $d_X(y) \in X(\patch(Z)) = \patch(X(Z))$, and hence by Proposition~\ref{patch Noetherian}, $d_X(y)$ is a generic point for a subset of $X(Z)$.  Conversely, let $y \in Y$ and suppose that for all $X \in {\ff S}$, $d_X(y)$ is a generic point of a subset of $X(Z)$.  Then by Proposition~\ref{patch Noetherian}, $d_X(y) \in \patch(X(Z))$.    As in the case of $\inv(Z)$, this implies that $y \in \patch(Z)$.   

To verify the assertion about $\pt(Z)$, let $y \in \pt(Z)$, and let $X \in {\ff S}$.  Then by Proposition~\ref{new qc} and Lemma~\ref{pick up extra}(3), $d_X(y) \in X(\pt(Z)) = \pt(X(Z)) \subseteq X(Z)$.  Hence $d_X(y)$ is a closed point in $X(Z)$.  Conversely, suppose that $y \in Y$ and for each $X \in {\ff S}$,  $d_X(y)$ is a closed point  of $X(Z)$.  
Since $X(Z)$ is quasicompact, $\gen(X(Z)) = \inv(X(Z))$, and hence $d_X(y)$ is  a closed point in $\inv(X(Z))$.  Since by Lemma~\ref{patch thm}, $\inv(Z) = \varprojlim \inv(X(Z))$, then it follows that $y$ is a closed point in $\inv(Z)$, and hence  
 $y \in \pt(Z)$.  
%
\end{proof}

\section{The Zariski-Riemann space as a limit of projective models} \label{Zariski-Riemann section}

A {\it projective model} $X$ of $F/D$ is a projective integral scheme over $\Spec(D)$ whose function field is a subfield of  $F$.  Because we wish for our framework to be flexible enough to handle the case in which $D$ is the prime subring of $F$, we depart from contemporary usage and follow Zariski-Samuel \cite[Chapter VI, \S 17]{ZS} in that we do not require a  projective model of $F/D$  to have function field $F$.  However,  
 using the notion of a dominant system of projective models mentioned below, we can reduce to this case when it is possible; that is, when $F$ is a finitely generated field extension of the quotient field of $D$.  
 
 Each projective model $X$ can be realized in a convenient way.     
  Namely,   there exist $f_1,\ldots,f_n \in F$  such that if for each $i$, $D_i = D[\frac{f_1}{f_i},\ldots,\frac{f_n}{f_i}]$, then $X =  \bigcup_{i}\Spec(D_i)$, where prime ideals  ${\ff p}_i$ of $D_i$ and ${\ff p}_j$ of $D_j$ are identified in $X$ whenever $(D_i)_{\ff p_i} = (D_j)_{\ff p_j}$.
We
  denote the structure sheaf of $X$ by $\OO_X$.  
For each nonempty open subset $U$ of $X$, we view the stalk $\OO_{X,x}$ as a subring of $F$, so that 
  $\OO_X(U) = \bigcap_{x \in U}\OO_{X,x}$.   The {\it Zariski topology} on $X$ has as a basis of open sets the sets of the form $\{x \in X:f_1,\ldots,f_n \in \OO_{X,x}\},$ where $f_1,\ldots,f_n \in F$.  

 
 A projective model $Y$ of $F/D$ {\it dominates} a projective model  $X$ of $F/D$ if for each $y \in Y$, there exists $x \in X$ such that $\OO_{X,x} \subseteq \OO_{Y,y}$ and ${\ff m}_{X,x} = {\ff m}_{Y,y} \cap \OO_{X,x}$.  Thus when $Y$ dominates $X$, there is a {\it domination morphism}  $\delta^Y_X =(d,d^\#):Y \rightarrow X$, where the map $d:Y \rightarrow X$ on the underlying topological spaces is given by $d(y) = x$ (with $x$  as above) and the sheaf morphism $d^\#:\OO_X \rightarrow d_*\OO_Y$ is defined on each nonempty open subset $U$ of $X$ by $d^\#(U):\OO_X(U) \rightarrow \OO_Y(d^{-1}(U)):s \mapsto s$.  
   The mapping $d$ is continuous and closed \cite[Lemma 5, p.~119]{ZS}, and  $\delta^Y_X$ is a dominant morphism of  schemes.    
   
     Given any two projective models $X$ and $Y$ of $F/D$, there exists a projective model ${Z}$ of $F/D$ (namely, $Z = X \times_{{\rm Spec}(D)} Y$) such that $Z$ dominates both $X$ and $Y$ \cite[Lemma 6, p.~119]{ZS}, and hence the set of projective models of $F/D$ equipped with the domination morphisms forms a projective  system, a fact we revisit in Proposition~\ref{lr}.    Similarly,   
    ${\ff X}$ dominates each projective model $X$ of $F/D$ in the sense that for each $x \in X$, there exists a valuation ring $V \in {\ff X}$ such that $\OO_{X,x} \subseteq V$ and ${\ff m}_{X,x} = {\ff M}_V \cap \OO_{X,x}$, where ${\ff M}_V$ is the maximal ideal of $V$ \cite[pp.~119-120]{ZS}.  We say that $V$ is {\it centered on} $x$.  
In fact, there is a morphim of locally ringed spaces $\delta^\X_X=(d,d^\#):\X \rightarrow X$, where $d(V) = x$ (with $x$ as above), and $d^\#:\OO_X \rightarrow d_*\OO_{\X}$ is defined on nonempty  open subsets $U$ of $X$ by $d^\#_U:\OO_X(U) \rightarrow \OO_{\X}(d^{-1}(U)):s \mapsto s$.      
     The mapping $d$ is  surjective, continuous and  closed  \cite[Lemma 4, p.~117]{ZS}.  
   Abstractly, the domination morphism $\delta^{\X}_X$ encodes the valuative criteria for properness \cite[Theorem II.4.7]{Hart}.

For lack of a reference, we note next that the underlying topological space of a projective model  is a  spectral space.    The proof of the proposition uses the fact that ${\ff X}$ is a spectral space, which we discuss more in Remark~\ref{after prop} and the next section.

\begin{proposition} \label{Noetherian spectral} The underlying topological spaces of a projective model $X$  of $F/D$ is a spectral space.  If also  $\Spec(D)$ is a Noetherian space, then $X$ is a Noetherian spectral  space. 
\end{proposition}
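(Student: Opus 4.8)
My plan is to verify Hochster's axioms for $X$ by combining its explicit affine cover with the domination map $\delta^\X_X$. First I would write $X=\bigcup_{i=1}^{n}\Spec(D_i)$ as in the realization above, with $D_i=D[\frac{f_1}{f_i},\ldots,\frac{f_n}{f_i}]$, noting that each $\Spec(D_i)$ is an open subscheme of $X$ and, being a prime spectrum, is itself a spectral space. From this cover three of the four axioms come almost for free: $X$ is $T_0$ since it is covered by the $T_0$ open subspaces $\Spec(D_i)$; the distinguished opens of the charts $\Spec(D_i)$ are quasicompact and, since the charts cover $X$, form a basis for $X$; and $X$ is quasicompact, either as a finite union of the quasicompact charts or as the image $d(\X)$ of the quasicompact space $\X$ under the continuous surjection $d$.

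For sobriety, I would take a nonempty irreducible closed $C\subseteq X$ and pick $i$ with $C\cap\Spec(D_i)\neq\emptyset$. Then $C\cap\Spec(D_i)$ is a nonempty open, hence dense and irreducible, subset of $C$ that is closed in $\Spec(D_i)$; sobriety of the affine chart produces a point $\eta$ with $\cl_{\Spec(D_i)}(\{\eta\})=C\cap\Spec(D_i)$, and then $\cl_X(\{\eta\})=\cl_X(C\cap\Spec(D_i))=C$, so $\eta$ is a generic point of $C$.

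The remaining axiom --- that the quasicompact opens are closed under finite intersection --- is where I would use that $\X$ is a spectral space, and this is the step I expect to be the crux. It suffices to show each overlap $\Spec(D_i)\cap\Spec(D_j)$ is quasicompact. Comparing centers, I would check that $d^{-1}(\Spec(D_i))=\X_{D_i}$, the set of valuation rings in $\X$ containing $D_i$: a valuation ring $V\supseteq D_i$ has a unique center on the (separated) scheme $X$, and on the chart $\Spec(D_i)$ this center is the prime ${\ff M}_V\cap D_i$, so it lies in $\Spec(D_i)$. Since $D_i$ is generated over $D$ by finitely many elements of $F$, the set $\X_{D_i}$ is a basic, hence quasicompact, open subset of the spectral space $\X$. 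Because $d$ carries $\X_{D_i}$ onto $\Spec(D_i)$, one obtains $\Spec(D_i)\cap\Spec(D_j)=d(\X_{D_i}\cap\X_{D_j})$, the continuous image of the quasicompact open $\X_{D_i}\cap\X_{D_j}$ of $\X$, and hence quasicompact. (Without $\X$ one would instead invoke the separatedness of the projective scheme $X$ directly.) Together with the previous paragraph this shows $X$ is spectral.

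For the Noetherian statement, I would argue that when $\Spec(D)$ is a Noetherian space, each finitely generated $D$-algebra $D_i$ again has Noetherian prime spectrum --- the property of having Noetherian spectrum, equivalently the ascending chain condition on radical ideals, ascends from $D$ to any finitely generated $D$-algebra --- so each $\Spec(D_i)$ is a Noetherian space and the finite union $X$ is Noetherian. Combined with the first part, $X$ is then a Noetherian spectral space. The main obstacles I anticipate are the quasi-separation step just described and this last ring-theoretic input on ascension of the Noetherian-spectrum property, each of which requires either the spectrality of $\X$ or an external result rather than a direct computation.
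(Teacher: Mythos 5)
Your proof is correct, and for the spectrality it takes a genuinely different route from the paper's. The paper gets all of Hochster's axioms at once by citing a transfer criterion of Dobbs and Fontana: given the continuous surjection $d:\X\rightarrow X$ from the spectral space $\X$ onto the $T_0$ space $X$ with its basis of quasicompact opens, it suffices to check that $d^{-1}(U)$ is quasicompact for each quasicompact open $U$ of $X$, which the paper does by computing $d^{-1}(U)=\{V\in\X: f_1,\ldots,f_n\in V\}$ for basic opens. You instead verify the axioms directly from the affine cover $X=\bigcup_i\Spec(D_i)$: sobriety comes from sobriety of the charts (your argument that an irreducible closed $C$ has a generic point supplied by any chart it meets is sound), and the only place you need outside input is quasi-separation, where your identification $d^{-1}(\Spec(D_i))=\X_{D_i}$ and the surjectivity of $d$ give quasicompactness of the overlaps $\Spec(D_i)\cap\Spec(D_j)$ as continuous images of basic opens of $\X$ --- essentially the same preimage computation the paper makes, but deployed only at this one step. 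Your approach is more self-contained (it does not need the Dobbs--Fontana proposition, and as you note the quasi-separation step could even be replaced by separatedness of the projective scheme, removing $\X$ from the argument entirely), at the cost of being longer; the paper's approach is shorter and reuses machinery it needs anyway. The Noetherian half of your argument --- ascent of the Noetherian-spectrum property to finitely generated algebras, then a finite union of Noetherian subspaces --- is exactly the paper's, which cites Ohm--Pendleton for the ascent.
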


\begin{proof} Let $X$ be a projective model of $F/D$.  Then  $X$ is a $T_0$-space having a basis of quasicompact open subsets.  Moreover, as discussed above, the domination mapping $d:\X \rightarrow X$ of the underlying  topological spaces   is a continuous surjective mapping, and, as noted below in Remark~\ref{after prop}, 
 ${\ff X}$ is a spectral space. Thus by \cite[Proposition 9]{DF}, to prove that $X$ is a spectral space, it suffices to show that  $d^{-1}(U)$ is a quasicompact subset of ${\ff X}$ for every quasicompact open subset $U$ of $X$.  Let $U$ be a basic  open subset of $X$.  Then there exist $f_1,\ldots,f_n \in F$ such that $U = \{x \in X: f_1,\ldots,f_n \in \OO_{X,x}\}$, and hence $d^{-1}(U) = \{V \in {\ff X}:f_1,\ldots,f_n \in V\}$ is a quasicompact open subset of ${\ff X}$. More generally, if $U$ is a quasicompact open subset of $X$, then $U = U_1 \cup \cdots \cup U_m$ for basic open subsets $U_i$ of $X$, and it follows that $d^{-1}(U)$ is a quasicompact subset of $\X$.  Therefore, $X$ is a spectral space.  
Suppose finally that $\Spec(D)$ is a Noetherian space.  
  The projective model $X$ of $F/D$ is covered by finitely many affine submodels $\Spec(D_i)$, $i=1,\ldots,m$, where each $D_i$ is a finitely generated $D$-subalgebra of  $F$.  A finitely generated algebra over a ring with Noetherian prime spectrum again has Noetherian prime spectrum \cite[Corollary 2.6]{OP}, and so  for each $i$, $\Spec(D_i)$ is a Noetherian space.  
It follows that $X$, as a finite union of Noetherian subspaces,  is a Noetherian space. 
\end{proof}


   \begin{remark} \label{after prop}  {\em 
   Viewing ${\ff X}$ and $X$ as spectral spaces, the proof of the proposition shows that the map $d:{\ff X} \rightarrow X$ is a spectral map.  That $d$ is a spectral map was first observed by Dobbs, Fedder and Fontana, who  gave  a topological proof that when 
  $X = \Spec(D)$ and $D$ has quotient field $F$, then 
${\ff X}$ is a spectral space and $d$ is a spectral map  \cite[Theorem 4.1]{DFF}.  Dobbs and Fontana presented a sharper version of this  result in  \cite[Theorem 2]{DF}  by exhibiting a ring $R$ (namely, the Kronecker function ring of $R$ with respect to the $b$-operation, which  is discussed in the next section) such that ${\ff X}$ is homeomorphic to $\Spec(R)$.  That the Zariski-Riemann space $\X$ of $F/D$  is a spectral space when $D$ does not necessarily have quotient field $F$ follows from \cite[Proposition 2.7]{HK} or \cite[Corollary 3.6]{FFL}.  
  In the appendix of \cite{Kuh}, Kuhlmann gives a model-theoretic proof of the fact that ${\ff X}$ is a spectral space. 
  }
 \end{remark}

 When $X$ is a projective model of $F/D$ and $\Spec(D)$ is a Noetherian space, then by Proposition~\ref{Noetherian spectral}, $X$ is a Noetherian space, so that by Corollary~\ref{very new cor},  $\inv(Y) = \gen(Y)$ for every subset $Y$ of $X$.  
 By contrast, subspaces of ${\ff X}$ are  generally   
  not quasicompact, and the inverse topology is more nuanced for this spectral space, as we see throughout the rest of the article.   We will see also that the patch topology  is more subtle on $\X$ than on the projective model $X$, the latter having been described in Proposition~\ref{patch Noetherian}.

   We say a collection ${\ff S}$ of projective models of $F/D$ is a  {\it dominant system} if for each pair of projective models $X$ and $Y$ of $F/D$ there exists a projective model $Z \in {\ff S}$ that dominates both $X$ and $Y$.  The set of all projective models of $F/D$ is a dominant system \cite[Lemma 6, p.~120]{ZS}.  On the other hand, if $F$ is finitely generated over the quotient field of $D$, then the set of projective models of $F/D$ having function field $F$ is also a dominant system.  In this second case, the notion of a dominant system gives us a convenient way to restrict to the birational setting.

 Let ${\ff S}$ be a dominant system of projective models. 
 Since  each pair of projective models in  ${\ff S}$ is dominated by another in ${\ff S}$, then ${\ff S}$  forms a projective system in the category of locally ringed spaces.    Moreover, for each $X \in {\ff S}$, we have the domination morphism $\delta^{\ff X}_X:\X \rightarrow X$. 
 While not stated in this terminology, most of the following proposition is implicit in Zariski-Samuel \cite[Theorem VI.41, p.~122]{ZS}. 


\begin{proposition} \label{lr} {\em (Zariski-Samuel)}  Let ${\ff S}$ be a dominant system of projective models of $F/D$.
\begin{itemize}

\item[{\em (1)}] As a topological space, $\X$ is the projective limit in the category of topological spaces of the underlying topological spaces of the projective models  in $ {\ff S}$.  

\item[{\em (2)}] As a spectral space, $\X$ is the projective limit in the category of spectral spaces of the underlying spectral spaces of the projective models  in $ {\ff S}$. Moreover, for each $X \in {\ff S}$, the map of spectral spaces  underlying the domination morphism $\delta^\X_X$ is a closed spectral map.

\item[{\em (3)}]  Each $V \in \X$ is the union of the $\OO_{X,x_V}$, where $X$ ranges over ${\ff S}$ and $x_V$ is the center of $V$ in $X$.  

\item[{\em (4)}] As locally ringed spaces,  $\X$ is the projective limit of the projective models in  ${\ff S}$.

\end{itemize}  
\end{proposition}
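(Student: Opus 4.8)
The plan is to route all four statements through the single \emph{center map} $\phi\colon\X\to\varprojlim X$ that sends a valuation ring $V$ to the family $(x_V)_{X\in{\ff S}}$ of its centers, and to prove the parts in the order (3), (1), (2), (4), since each relies on its predecessors. I would establish (3) first, as it is the technical heart of the whole proposition. Fix $V\in\X$. Each center satisfies $\OO_{X,x_V}\subseteq V$, so $\bigcup_{X}\OO_{X,x_V}\subseteq V$; the union is directed because ${\ff S}$ is a dominant system. For the reverse inclusion, given $f\in V$ I would produce a model on which $f$ is regular at the center of $V$: the data $f_1=f$, $f_2=1$ realize a projective model $X_0=\Spec(D[1/f])\cup\Spec(D[f])$, and since $f\in V\supseteq D[f]$ the center of $V$ on $X_0$ lies in $\Spec(D[f])$, so $f\in\OO_{X_0,x_V}$. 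Choosing $X\in{\ff S}$ dominating $X_0$ and using that centers are compatible with domination gives $f\in\OO_{X_0,x_V}\subseteq\OO_{X,x_V}$. Thus $V=\bigcup_X\OO_{X,x_V}$; this is the classical computation underlying \cite[Theorem VI.41, p.~122]{ZS}.

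For (1), the map $\phi$ is well defined because centers are compatible with the domination maps, and continuous because $\pi_X\circ\phi=d_X$ for every $X$ while $\varprojlim X$ carries the initial topology. Injectivity is immediate from (3), since $V$ is recovered from its family of centers as $\bigcup_X\OO_{X,x_V}$. For surjectivity I would take a compatible family $(x_X)_X$, form the directed union $R=\bigcup_X\OO_{X,x_X}$ (a local subring of $F$ with maximal ideal $\bigcup_X{\ff m}_{X,x_X}$), and dominate $R$ by a valuation ring $V$ of $F$; because each $\OO_{X,x_X}$ is local, ${\ff M}_V\cap\OO_{X,x_X}={\ff m}_{X,x_X}$, so $V$ is centered on $x_X$ for every $X$ and $\phi(V)=(x_X)_X$. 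To promote the continuous bijection $\phi$ to a homeomorphism I would check it is open on the basis $\{\X_S\}$: for finite $S\subseteq F$ and any $X\in{\ff S}$ the identity $\X_S=d_X^{-1}(U)$ with $U=\{x\in X:S\subseteq\OO_{X,x}\}$ (from the proof of Proposition~\ref{Noetherian spectral}) yields $\phi(\X_S)=\pi_X^{-1}(U)$, which is open; a continuous open bijection is a homeomorphism.

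Parts (2) and (4) are then formal. Each $d_X$ is spectral (Remark~\ref{after prop}) and closed \cite[Lemma 4, p.~117]{ZS}, so for (2) it remains only to verify the universal property among spectral maps: given compatible spectral maps $g_X\colon T\to X$ from a spectral space $T$, the unique continuous $g\colon T\to\X$ from (1) is spectral because every quasicompact open of $\X$ is a finite union of basic sets $\X_S=d_X^{-1}(U)$, so $g^{-1}(\X_S)=g_X^{-1}(U)$ is quasicompact. For (4), by (1) the underlying space of the limit in locally ringed spaces is $\X$, and its structure sheaf is the sheafification of the direct limit $\varinjlim_X d_X^{-1}\OO_X$ (legitimately computed as in ringed spaces, since the resulting stalks will be local). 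I would identify this sheaf with $\OO_{\X}$ stalkwise: at $V$ the stalk of the direct limit is $\varinjlim_X\OO_{X,x_V}=\bigcup_X\OO_{X,x_V}=V$ by (3), which matches $\OO_{\X,V}=V$; a sheaf map that is a stalkwise isomorphism is an isomorphism, and the stalks $V$ are local, so $(\X,\OO_{\X})$ together with the $\delta^\X_X$ is the projective limit in locally ringed spaces.

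The main obstacle is concentrated in (3) and the surjectivity step of (1): exhibiting, for a prescribed compatible system of centers, a valuation ring of $F$ realizing it, and conversely recovering a valuation ring as the union of the local rings at its centers. Once these are in hand, parts (2) and (4) are bookkeeping with universal properties, using that $\X$ and each $X$ are already known to be spectral spaces.
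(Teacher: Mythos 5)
Your proposal reconstructs in full the classical Zariski--Samuel argument that the paper itself does not spell out: for (1), (3) and (4) the paper simply cites the proof of Theorem VI.41 in [ZS] and [Khan, Theorem 2.1.5], and for (2) it observes that the domination morphisms between models are of finite type (hence quasicompact, hence spectral) and invokes (1). So your route through the center map $\phi$, the directed-union description of $V$, and Chevalley's extension theorem for surjectivity is exactly the argument being cited, carried out explicitly; the order (3), (1), (2), (4) and the stalkwise identification in (4) are all sound, and (2) via a direct verification of the universal property is a reasonable alternative to the paper's appeal to the general fact that a cofiltered limit of spectral spaces along spectral maps, computed in ${\rm Top}$, is the limit in the spectral category.

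There is, however, one concretely false step. The identity $\X_S=d_X^{-1}(U)$ with $U=\{x\in X:S\subseteq\OO_{X,x}\}$ does \emph{not} hold for an arbitrary $X\in{\ff S}$; only $d_X^{-1}(U)\subseteq\X_S$ is automatic. For example, with $D=k[s,t]$, $F=k(s,t)$, $X=\Spec(D)$, $S=\{s/t\}$ and $V$ the monomial valuation ring with $v(s)=2$, $v(t)=1$, one has $s/t\in V$ while the center of $V$ is $(s,t)$ and $s/t\notin D_{(s,t)}$, so $V\in\X_S\smallsetminus d_X^{-1}(U)$. You use this identity twice: to show $\phi$ is open in (1) and to show the induced map is spectral in (2). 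Both uses are repaired by replacing ``any $X$'' with ``some suitable $X$'': for $S=\{f_1,\dots,f_n\}$ take a model $X\in{\ff S}$ dominating each of the models determined by $\{1,f_i\}$ (the very construction you already use in (3)); then $\X_{\{f_i\}}=d_X^{-1}(W_i)$ where $W_i$ is the preimage in $X$ of the affine piece $\Spec(D[f_i])$, and $\X_S=d_X^{-1}(W_1\cap\cdots\cap W_n)$ with $W_1\cap\cdots\cap W_n$ quasicompact open. With that substitution the openness of $\phi$ and the spectrality of the induced map go through, and the rest of your argument stands.
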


\begin{proof} Statements (1) and (3)  follow 
from the proof of Theorem VI.41, p.~122, in \cite{ZS}. 
Details for the proof of  statement (4) can  be found in \cite[Theorem 2.1.5]{Khan}.  To see that (2) holds, 
note that  the domination morphisms $\delta^Y_X:Y \rightarrow X$, where $Y$ and $X$ are projective models in ${\ff S}$ and  $Y$ dominates $X$, are of finite type, and hence are quasicompact \cite[Exercise II.3.2, p.~91]{Hart}. Thus the underlying continuous maps of the domination morphisms between projective models in ${\ff S}$ are spectral maps. It follows then from (1) that  $\X$ is the projective limit in the category of spectral spaces of the underlying spectral spaces of the projective models  in $ {\ff S}$. That the underlying map of the domination morphism $\delta^\X_X$ is a closed spectral map was discussed above. 
\end{proof}

   When $X$ is a projective model of $F/D$ and ${{Z}}$ is a subset of ${\ff X}$ we set \begin{center}
   $X({{Z}}) =$ the image of ${{Z}}$ in $X$ under the domination morphism $\delta^{\X}_X$.
   \end{center} Thus a point $x \in X$ is in $X({{Z}})$ if and only if there is   a valuation ring $V$ in $Z$ centered on $x$.

%
%




\begin{corollary}  \label{fundamental cor}
Suppose $\Spec(D)$ is a Noetherian space.  Let ${\ff S}$ be a dominant system of projective models of $F/D$ and let  $Z$ be a subspace of $\X$.  Then:
\begin{eqnarray*}
 \inv(Z) & = &\{V \in \X:\forall X \in {\ff S}, {\mbox{the center of }} V {\mbox{ in }} X {\mbox{ specializes to a point in }}X(Z)\} \\
 \patch(Z) &  = &\{V \in \X:\forall X \in {\ff S}, \: V {\mbox{ is centered on a generic point of a subset of }} X(Z)\} \\
  \pt(Z) & = & \{V \in \X:\forall X \in {\ff S}, \: V {\mbox{ is centered on a closed point in }} X(Z)\}.    
\end{eqnarray*}











 \end{corollary}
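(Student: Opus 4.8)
The plan is to obtain the corollary as a direct specialization of Theorem~\ref{fundamental} to the setting $Y = \X$, with the projective system ${\ff S}$ taken to be the given dominant system of projective models of $F/D$. Under this identification the projection $d_X \colon \X \to X$ is exactly the underlying map of the domination morphism $\delta^\X_X$, so $d_X(V)$ is the center $x_V$ of $V$ in $X$, and $X(Z) = d_X(Z)$ agrees with the notation introduced after Proposition~\ref{lr}. Once the hypotheses of Theorem~\ref{fundamental} are in force, the three displayed formulas there translate verbatim into the three assertions of the corollary, reading ``$d_X(V)$ specializes to a point of $X(Z)$'' as ``the center of $V$ in $X$ specializes to a point of $X(Z)$,'' and similarly for the generic-point and closed-point conditions.

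The substance of the proof is therefore the verification that all hypotheses of Theorem~\ref{fundamental} (equivalently, of Lemma~\ref{patch thm}) hold in this case. First, by Proposition~\ref{lr}(2), $\X$ is the projective limit of ${\ff S}$ in the category of spectral spaces with spectral maps, which is precisely the standing assumption of Lemma~\ref{patch thm}. Second, the extra Noetherian hypothesis of Theorem~\ref{fundamental}---that each space in ${\ff S}$ is Noetherian---follows from Proposition~\ref{Noetherian spectral}: since $\Spec(D)$ is assumed to be a Noetherian space, every projective model $X$ of $F/D$ is a Noetherian spectral space. Third, the additional requirement needed for the formula for $\pt(Z)$, namely that each $d_X$ be a closed map, is supplied by the last sentence of Proposition~\ref{lr}(2), which asserts that the map underlying $\delta^\X_X$ is a closed spectral map. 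With these three facts in hand, each of the three equalities follows by invoking the corresponding line of Theorem~\ref{fundamental}.

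There is no serious obstacle here; the work is entirely in assembling the right earlier results and checking that the dictionary between projection maps and centers is faithful. The one point that merits care is confirming that the Noetherian hypothesis is applied to the correct spaces: Theorem~\ref{fundamental} requires every member of ${\ff S}$ to be Noetherian (not merely $\X$, which is typically \emph{not} Noetherian), and this is exactly what Proposition~\ref{Noetherian spectral} guarantees for projective models once $\Spec(D)$ is Noetherian. I would also note explicitly that the assumption that $\Spec(D)$ is Noetherian enters only through Proposition~\ref{Noetherian spectral}, forcing the approximating spaces to be Noetherian, so that the quasicompactness of each $X(Z)$---used repeatedly in the proof of Theorem~\ref{fundamental} via Proposition~\ref{new qc}---is available.
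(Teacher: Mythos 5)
Your proof is correct and follows essentially the same route as the paper, which simply invokes Proposition~\ref{Noetherian spectral} to make each projective model a Noetherian spectral space and then applies Theorem~\ref{fundamental} together with Proposition~\ref{lr}. You spell out the verification of the hypotheses (projective limit structure, Noetherianity of the approximating spaces, closedness of the domination maps) more explicitly than the paper does, but there is no difference in substance.
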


\begin{proof}
Since $\Spec(D)$ is a Noetherian space, each projective model of $F/D$  is by Proposition~\ref{Noetherian spectral} a Noetherian spectral space. Thus we may apply Theorem~\ref{fundamental} and Proposition~\ref{lr}(3)    to obtain the corollary. 
\end{proof}

  An example shows that without the assumption in the theorem that $\Spec(D)$ is a Noetherian space, the characterization of $\inv(Z)$ need not be valid.      

\begin{example}  {\em Let $R$ be a Pr\"ufer domain with quotient field $F$, and suppose that $R$ has a maximal ideal $M$ such that $\bigcap_{N \ne M}R_N \subseteq R_M$, where $N$ ranges over the maximal ideals of $R$ distinct from $M$. Examples of such Pr\"ufer domains include the ring of entire functions \cite[Proposition 8.1.1(6), p.~276]{FHP}, holomorphy and Kronecker function rings of function fields of transcendence degree $>1$ (\cite[Theorem 4.7]{OlbH} and \cite[Theorem 4.3]{HK}, respectively),
and the ring of integer-valued polynomials (e.g., combine \cite[Theorem 1.6]{GHmanu} and the proof of Proposition VI.2.8 in \cite{CC}.)  Then $X:=\Spec(R)$ is a projective model of $F/R$.  Set ${{Z}}_2 = \{R_N:N \in \Max(R) {\smallsetminus} \{M\}\}$ and ${{Z}}_1 = \Max(R)$.  Then $\bigcap_{V \in Z_1}V = \bigcap_{V \in Z_2} V$, and 
since the fact that $R$ is a Pr\"ufer domain implies that $\X$ is an affine scheme (see Section 4), then $\inv(Z_1) = \inv(Z_2)$ (see Proposition~\ref{top prelim}(5)).  Thus $R_M \in \inv(Z_1) = \inv(Z_2)$, but  
 the  closure of the center $M$ of the valuation ring $R_M \in {{Z}}_1$ in $X$  is simply $\{M\}$, which does not meet $X({{Z}}_2)$, in contrast to the characterization in Corollary~\ref{fundamental}.}
\end{example}


\begin{remark} {\em  Suppose $D$ is a field and $F$ is a finitely generated field extension of $D$.  Let $Z \subseteq \X$.   Favre has shown that for any valuation ring $V \in \cl(Z)$, either $V \in \cl(\{U\})$ for some $U \in Z$ or there is a sequence $\{V_i\}_{i = 1}^\infty$ of valuation rings in $Z$ such that $V$ is the limit in the Zariski topology on $\X$ of the $V_i$ \cite[Theorem 3.1]{Fav}.  Thus every valuation ring in $\cl(Z) \cap \gen(Z)$ is a limit of a (countable) sequence of valuation rings in $Z$.  It follows also from \cite[Lemma 2.4]{Fav} that in the terminology of Remark~\ref{uf remark}(1),  every valuation ring in $\cl(Z) \cap \gen(Z)$ is an ultrafilter limit of countably many valuation rings in $Z$.  }
\end{remark}


 In Corollary~\ref{Kuhlmann cor} we prove  patch density  of some canonically chosen collections of valuation rings.   The corollary is a consequence of a fact about the ``degenerate'' case for the intersection of valuation rings in a subspace of $\X$.  Recall that  
a projective model $X$  having function field $F$ is {\it normal} 
if $X$  is defined by $f_0,\ldots,f_n\in F$  such that the rings $D_i=D[\frac{f_0}{f_i},\ldots,\frac{f_n}{f_i}]$ in the resulting open cover 
  $X = \bigcup_{i=0}^n\Spec(D_i)$ are all integrally closed in $F$.   
  
  



\begin{proposition} \label{Hilbert} Suppose $D$ is a finitely generated algebra over a field.  If 
 $X$ is a normal projective model of $F/D$ having function field $F$, and ${{Z}}$ is a subset of ${\ff X}$  such that $X({{Z}})$ contains all but at most finitely many closed points of $X$, then    $A = \bigcap_{V \in Z}V$ is the integral closure of $D$ in $F$.  
\end{proposition}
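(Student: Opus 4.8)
The plan is to prove the two inclusions $\overline{D}\subseteq A$ and $A\subseteq\overline{D}$, where $\overline{D}$ is the integral closure of $D$ in $F$. The first is immediate: each $V\in Z$ contains $D$ and, being a valuation ring, is integrally closed in $F$, so $\overline{D}\subseteq V$ and hence $\overline{D}\subseteq A$. The content is the reverse inclusion, and I would first record the identification $\overline{D}=\OO_X(X)=\bigcap_{x\in X}\OO_{X,x}$. Since $X$ is proper over $\Spec(D)$ and $D$ is Noetherian (being finitely generated over a field), the finiteness theorem for proper morphisms makes $\OO_X(X)$ a finite $D$-module; as a subring of $F$ that is module-finite over $D$ it is integral over $D$, so $\OO_X(X)\subseteq\overline{D}$, while normality of $X$ gives $\overline{D}\subseteq\OO_{X,x}$ for every $x$ and hence $\overline{D}\subseteq\OO_X(X)$. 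Restricting to closed points changes nothing, since each $x$ specializes to a closed point $x'$ with $\OO_{X,x'}\subseteq\OO_{X,x}$.

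To prove $A\subseteq\overline{D}$ I would argue by contradiction: suppose $a\in A$ but $a\notin\overline{D}=\OO_X(X)$. Then $a\notin\OO_{X,\xi}$ for some point $\xi$, and since $X$ is normal the locus where $a$ fails to be regular is a nonempty divisor. Indeed, writing $\OO_{X,\xi}$ as the intersection of its height-one localizations, which are the discrete valuation rings $\OO_{X,\eta}$ attached to the prime divisors through $\xi$, some prime divisor $P$ with generic point $\eta$ satisfies $v_P(a)<0$. The key local computation is that at a sufficiently general closed point $x_1\in\cl(\{\eta\})$ one can write $a=u\,\pi^{-n}$ with $n\geq 1$, with $\pi$ a local equation of $P$ in $\OO_{X,x_1}$, and with $u$ a unit of $\OO_{X,x_1}$. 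Granting this, for any $V\in\X$ centered on $x_1$ we have $v_V(u)=0$ and $v_V(\pi)>0$, since $\pi\in\mathfrak{m}_{X,x_1}\subseteq\mathfrak{M}_V$; hence $v_V(a)=-n\,v_V(\pi)<0$ and $a\notin V$.

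To finish I would invoke the hypothesis. The set of closed points of $P$ at which the normal form $a=u\pi^{-n}$ fails, namely those lying on another component of the polar locus, on the zero locus of $a\pi^{n}$, or on the singular locus of $X$, is a proper closed subset of $P$. Removing these together with the finitely many closed points of $X$ not in $X(Z)$ still leaves a closed point $x_1\in P\cap X(Z)$ of the good type, provided $P$ carries infinitely many closed points. By the definition of $X(Z)$ there is then $V\in Z$ centered on $x_1$, and by the previous paragraph $a\notin V$, contradicting $a\in A\subseteq V$. Therefore $a\in\overline{D}$, which gives $A\subseteq\overline{D}$ and hence $A=\overline{D}$.

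The main obstacle is the local normalization in the second paragraph: producing, off a proper closed subset of $P$, a single local equation $\pi$ for $P$ together with a factorization $a=u\pi^{-n}$ with $u$ a unit. This requires working on the regular locus of $X$ (whose complement in the normal variety $X$ has codimension $\geq 2$ and so meets $P$ in a proper closed subset), so that $P$ is Cartier near $x_1$, and then spreading out from $\eta$ the fact that $a\pi^{n}$ is a unit to a dense open subset of $P$. A second, more structural point is that the argument needs the polar divisor to carry infinitely many closed points, which is precisely what can fail for one-dimensional models; this is what prevents the finitely many excluded closed points from swallowing an entire polar locus, and it is where the geometry of $X$ genuinely enters.
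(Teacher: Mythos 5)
Your argument is correct for $\dim X\geq 2$ and takes a genuinely different route from the paper's. The paper argues through the patch topology: writing $X=\bigcup_i\Spec(D_i)$ with each $D_i$ integrally closed and finitely generated over a field, it uses the fact that $D_i$ is a Hilbert (Jacobson) ring to see that each height one prime ${\ff p}$ of $D_i$ is an intersection of infinitely many maximal ideals, hence remains a generic point of a subset of $X(Z)$ after discarding finitely many closed points; by Proposition~\ref{patch Noetherian} this puts ${\ff p}$ in $\patch(X(Z))$, Lemma~\ref{pick up extra}(4) then produces a valuation ring of $\patch(Z)$ centered on ${\ff p}$, which must be the DVR $(D_i)_{\ff p}$, and since $A=\bigcap_{V\in\inv(Z)}V$ (Proposition~\ref{top prelim}(4)) and each $D_i$ is a Krull domain one gets $A\subseteq\bigcap_iD_i\subseteq\bigcap_{V\in\X}V=\overline{D}$. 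You replace all of this with a hands-on element argument: a polar prime divisor $P$ of $a$, a sufficiently general closed point $x_1\in P\cap X(Z)$ at which $a=u\pi^{-n}$, and an explicit $V\in Z$ with $a\notin V$. Both proofs ultimately rest on the same geometric fact --- a prime divisor carries infinitely many closed points, so finitely many omissions cannot cover it --- but yours trades the spectral-space machinery for some algebraic geometry (regular locus, factorization in a UFD, spreading out from the generic point of $P$), and it has the small advantage of exhibiting a witness $V$ in $Z$ itself rather than in $\inv(Z)$. Incidentally, the identification $\overline{D}=\OO_X(X)$ does not require coherence of proper pushforward: every $V\in\X$ contains some $D_i$, so $\bigcap_iD_i\subseteq\bigcap_{V\in\X}V=\overline{D}$, and the reverse inclusion follows from normality.

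Your closing caveat about one-dimensional models is well taken, and in fact it applies equally to the paper's own proof: the step ``${\ff p}$ is an intersection of infinitely many maximal ideals'' is valid only when ${\ff p}$ is not itself maximal, i.e.\ when $\dim X\geq 2$. When $\dim X=1$ the statement as written fails: take $D=k$ algebraically closed, $F=k(t)$, $X={\mathbb P}^1_k$, and let $Z$ consist of all valuation rings except those centered at $0$ and $\infty$; then $X(Z)$ omits only two closed points, yet $A=k[t,t^{-1}]\neq k$. So the residual obstacle you identify is a defect of the proposition's hypotheses rather than of your argument; both proofs go through once one assumes $\dim X\geq 2$ (or that $X(Z)$ contains every closed point).
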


\begin{proof}
There exist $f_1,\ldots,f_n \in F$  such that with $D_i = D[\frac{f_1}{f_i},\ldots,\frac{f_n}{f_i}]$, then each $D_i$ is an integrally closed Noetherian domain with quotient field $F$ and  $X= \bigcup_{i}\Spec(D_i)$.  Fix $i$, and  let  ${\ff p}$ be a height $1$ prime ideal of $D_i$.  Then since $D_i$ is a Hilbert domain, ${\ff p}$ is an intersection of  infinitely many maximal ideals of $D_i$.  Now since all but at most finitely many maximal ideals of $D_i$ are in $X({{Z}})$, it follows that the point in $X$ corresponding to ${\ff p}$ is a generic point of a subset of $X$.  
    Therefore, by Proposition~\ref{patch Noetherian},  ${\ff p}$ is in $\patch(X({{Z}}))$.  
    Since the domination map $Z \rightarrow X$ is a closed spectral map, Lemma~\ref{pick up extra}(4) implies that  each point in $X$ that is a generic point for a subset of $X({{Z}})$ has a valuation ring in $\patch({{Z}})$ centered on it.
    Thus since $(D_i)_{\ff p}$ is a DVR, it must be that $(D_i)_{\ff p} \in \patch({{Z}}) \subseteq \inv({{Z}})$.  This is true for every choice of $i$ and height $1$ prime ideal ${\ff p}$ of $D_i$, so since each $D_i$ is a Krull domain, we conclude  that $A \subseteq D_1 \cap \cdots \cap D_n$.  Moreover, every valuation ring in ${\ff X}$ contains at least one of the rings $D_i$.  Thus $A$ is the intersection of all valuation rings in ${\ff X}$, so that $A$ is the integral closure of $D$ in $F$.     
  \end{proof}

Thus with the assumptions on $D$, if for each closed point $x$ in $X$, the fiber ${{Z}}_x$ of the domination mapping $\delta:Z \rightarrow X$ is nonempty, then $A$ is  the integral closure of $D$ in $F$.  We use this observation to prove from a different point of view a density result due to Kuhlmann.  His result is stronger than what is stated here, and is a consequence of a powerful existence theorem for valuations on function fields.


\begin{corollary} \label{Kuhlmann cor} {\em (Kuhlmann \cite[Theorem 9]{Kuh})}
Let $D$ be a subfield of $F$, and suppose that $F/D$ is a finitely generated field  extension of transcendence $n\geq 1$.   
 Let  $d \geq 0$ and $r \geq 1$ such that $n \geq d+r$, and let ${{Z}}_{d,r}$ be the set of all  discrete valuation rings of rank $r$ whose residue field is a finitely generated field extension of $D$ having transcendence degree $d$.  Then ${{Z}}_{d,r}$ is patch dense in $\X$.  
\end{corollary}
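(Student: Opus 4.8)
The plan is to prove patch density model by model and then pass to the projective limit. Since $D$ is a field and $F/D$ is finitely generated, the normal projective models of $F/D$ having function field $F$ form a dominant system ${\ff S}$, and every $X\in{\ff S}$ is an integral scheme of finite type over the field $D$; thus $X$ is a Noetherian spectral space of dimension $n$, and, being of finite type over a field, it is a Jacobson space, so that every closed subset of $X$ is the closure of the closed points it contains. By Lemma~\ref{patch thm}, $\patch(Z_{d,r})=\varprojlim\patch(X(Z_{d,r}))$, while $\X=\varprojlim X$ by Proposition~\ref{lr}(2); hence it suffices to show $\patch(X(Z_{d,r}))=X$ for each $X\in{\ff S}$.

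Fix $X\in{\ff S}$. Since $X$ is Noetherian, Proposition~\ref{patch Noetherian}(2) identifies $\patch(X(Z_{d,r}))$ with the set of points of $X$ that are generic points of subsets of $X(Z_{d,r})$, so I would reduce the problem to the closed points. A closed point $x$ is a generic point only of $\{x\}$, whence $x\in\patch(X(Z_{d,r}))$ if and only if $x\in X(Z_{d,r})$; the claim is therefore that every closed point of $X$ carries a valuation ring from $Z_{d,r}$. Granting this, the non-closed points are automatic: if $x$ is not closed then $\cl(\{x\})$ has positive dimension and, by the Jacobson property, is the closure of the infinitely many closed points it contains, all of which lie in $X(Z_{d,r})$, so $x$ is a generic point of a subset of $X(Z_{d,r})$. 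This is exactly the mechanism of Proposition~\ref{Hilbert}, where a height-one prime of a Hilbert domain is recovered as a generic point of the maximal ideals lying over it. Combining the two cases yields $\patch(X(Z_{d,r}))=X$.

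What remains, and where the real content lies, is the purely local existence statement: for each closed point $x$ of each $X\in{\ff S}$, there is a rank-$r$ discrete valuation ring $V$ with center $x$ whose residue field is a finitely generated extension of $D$ of transcendence degree $d$. Here $\OO_{X,x}$ is a normal Noetherian local domain of dimension $n$ with quotient field $F$ and residue field algebraic over $D$, and the hypothesis $n\ge d+r$ is precisely Abhyankar's inequality $\operatorname{rank}(V)+\operatorname{tr.deg}_D(k_V)\le\operatorname{tr.deg}_D(F)$ for a valuation dominating $\OO_{X,x}$. I would build $V$ as a quasi-monomial valuation: after replacing $X$ by a suitable blow-up over $x$, choose parameters $t_1,\dots,t_n$ and assign values in $\mathbb{Z}^r$ ordered lexicographically, letting $t_1,\dots,t_r$ carry the standard basis, arranging $d$ of the remaining parameters to contribute $d$ algebraically independent residues, and giving every parameter positive value so that the center is the full maximal ideal $\mathfrak{m}_x$ rather than a higher-dimensional point. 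The main obstacle is exactly this construction: one must simultaneously force the rank to equal $r$, the residue field to be finitely generated of transcendence degree exactly $d$, and the center to be the closed point, and it is the inequality $n\ge d+r$ that makes these three requirements compatible. This local existence is the elementary analogue of the function-field existence theorem underlying Kuhlmann's stronger statement, and the topological machinery of Sections~2 and~3 is precisely what reduces the corollary to it.
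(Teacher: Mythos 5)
Your argument follows the paper's proof essentially verbatim: reduce via Lemma~\ref{patch thm} to showing $\patch(X(Z_{d,r}))=X$ on each model in a dominant system, use the Hilbert/Jacobson property (as in Proposition~\ref{Hilbert}) to reduce that to the existence of a valuation ring in $Z_{d,r}$ centered at each closed point, and then appeal to the local existence of rank-$r$ discrete valuations with finitely generated residue field of transcendence degree $d$. The only difference is that where you sketch a quasi-monomial construction of such a valuation and flag it as the main remaining obstacle, the paper simply cites this local existence as a standard fact of the valuation theory of function fields (Bourbaki, Ch.~VI, \S 10.3, Theorem 1, or Kuhlmann), so your final step is precisely the ingredient the paper treats as known.
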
   

\begin{proof}  Let $X$ be a projective model of $F/D$.  Then since $X = \bigcup_{i=1}^n\Spec(D_i)$, where each $D_i$ is a Hilbert domain, it follows as in the proof of  Proposition~\ref{Hilbert} that the patch closure of the set of Zariski closed points of $X$ is $X$.  Thus 
in light of Lemma~\ref{patch thm}, it suffices to show that when $X$ is a projective model of $F/D$ with function field $F(X)$, then every closed point in $X$ has a valuation ring in ${{Z}}_{d,r}$ centered on it.   This is a standard fact about the valuation theory of function fields; for example, it follows from    \cite[Chapter VI, {\S}10.3, Theorem 1]{B} (but see the version stated in \cite[Lemma 2.6]{Kuh2}).  
\end{proof}
  
As a particular case of the corollary, the set ${{Z}}$ of  discrete rank one valuation rings (DVRs) $V$ in $\X$ whose  residue fields are finite over $k$ is patch dense in $\X$.  This has the interesting consequence that every valuation ring in $\X$ is an ultrafilter limit of valuation rings in ${{Z}}$; see Remark~\ref{uf remark}(1) below.  These DVRs in ${{Z}}$ are particularly tractable, since they arise from prime ideals in the generic formal fiber of local rings of closed points in projective models of $F/k$; see \cite[2.6, p.~25]{HRS}.           

In the context of the corollary, when   $k$ is a perfect field, then the   set ${{Z}}$  of DVRs in ${\ff X}$ having residue field $k$  lie dense with respect to the patch topology in the space ${{Z}}'$ of all valuation rings in ${\ff X}$ having residue field $k$ \cite[Corollary 5]{Kuh}.  If also $k$ is not algebraically closed, then as noted in Example~\ref{affine examples}, ${{Z}}$, and hence ${{Z}}'$, are ``affine'' sets, as defined in the next section. 


\begin{remark} \label{uf remark} {\em 
\begin{itemize}
\item[]

\item[{(1)}]   The 
 patch closure in ${\ff X}$ has a  helpful interpretation in terms  of ultrafilter limits of valuation rings.  This is developed in \cite[Corollary 3.8]{FFL}, where it is shown that when ${{Z}}$ is a subset of ${\ff X}$, then $\patch({{Z}})$ is the set of valuation rings $V$ in ${\ff X}$ of the form $$V = \{x \in F:\{V \in {{Z}}:x \in V\} \in {\cal U}\},$$ where ${\cal U}$ is an ultrafilter on the set ${{Z}}$.

\item[{(2)}]  A  version of the Zariski-Riemann space has been developed for graded valuation rings  by  Temkin in \cite{TemNon}, where it is shown  that the graded Zariski-Riemann space is quasicompact \cite[Lemma 2.1]{TemNon}. 
  In \cite[Theorem 1.5]{Duc}, Ducros shows that certain maps between graded Zariski-Riemann spaces are closed with respect to the inverse topology.       
     The patch topology on the graded Zariski-Riemann  space is considered by Conrad and Temkin in  \cite[Lemma 5.3.6]{CT}.
 
\item[{(3)}] With $D = {\mathbb{C}}[[X,Y]]$, $F = {\mathbb{C}}((X,Y))$ and $\X_{0}$ the set of valuation rings in $F/D$ centered on the maximal ideal of $D$, Favre and Jonsson use  
the patch topology (the ``Hausdorff-Zariski'' topology) on ${\ff X}_0$  to describe features of  a tree structure on $\X_{0}$  whose partial ordering is  determined by behavior with respect to  sequences of blow-ups \cite[Proposition 5.29, p.~107]{FJ}.

\item[(4)]  Motivated by applications in real algebraic geometry and rigid analytic geometry,  Huber introduced the valuation spectra of a ring as a generalization of 
 the Zariski-Riemann surface of an extension $F/D$.  The patch topology is  often the topology of choice for valuation spectra; see Huber-Knebusch \cite{HubKne}.     
\end{itemize}      }
 \end{remark}

  \section{The Zariski-Riemann space as the image of an affine scheme}

 Again  we assume $D$ is a subring of a field $F$ but we do not assume any additional conditions (e.g., Noetherian) on $D$.  
  By Proposition~\ref{lr}, the locally ringed space ${\ff X}$  is a projective limit of projective schemes, but  is itself in general not a scheme.  However, by using the Kronecker function ring construction from multiplicative ideal theory, we can view ${\ff X}$ as the image of an affine scheme. 
We discuss how to do this in this section.  

Let  $T$ be an indeterminate for $F$.  For each valuation ring $V \in {\ff X}$, let $V^*$ be the Gaussian extension of $V$ to $F(T)$ ($=$ field of  rational functions in the variable $T$); that is, $$V^*  = V[T]_{{\ff M_V}[T]}.$$  Then $V^*$ is a valuation ring with quotient field $F(T)$ such that $V = V^* \cap F$. For a subset ${{Z}}$ of ${\ff X}$, we define the {\it Kronecker function ring of ${{Z}}$} to be the ring $$\Kr({{Z}}) = \bigcap_{V \in {{Z}}} V^*.$$  In the special case in which $D$ has quotient field $F$, then    $\Kr(\X)$ is the classical   
Kronecker function ring of $D$ with respect to the $b$-operation (see \cite[Section 26]{G}): $$\Kr(\X)=
\left\{\frac{f}{g}:f,g \in D[T], g\ne 0 {\mbox{ and }} \overline{c(f)} \subseteq \overline{c(g)}\right\},$$ where $\overline{I}$ denote the integral closure of the ideal $I$ in $D$ (which in the notation of Krull is  $I^b$, hence  the terminology of ``$b$-operation'').  Although we will not need it, this classical description of the Kronecker function ring of $\X$ with respect to $b$ can also be generalized to  projective space \cite{FH}.  

In any case, when ${{Z}}$ is a subspace of ${\ff X}$, then 
it follows from work of Halter-Koch on function rings that  $\Kr({{Z}})$ is a Pr\"ufer domain with quotient field $F(T)$; cf.~\cite[Corollary 3.6]{FFL}, \cite[Theorem 2.2]{HalKoc} and   
  \cite[Corollary 2.2]{HK}. Recall that a domain $R$ is a {\it Pr\"ufer domain} if every localization of $R$ at a prime ideal is a valuation ring; equivalently, every valuation ring between $R$ and its quotient field is a localization of $R$ at a prime ideal.  
   It follows then that if $R$ is a Pr\"ufer domain containing $D$ and having  quotient field $F$, then ${\ff X}_R \rightarrow \Spec(R)$ is an isomorphism of locally ringed spaces, and hence ${\ff X}_R$ has the structure of an  
  affine scheme.  Thus we obtain:  {\it For each subset  ${{Z}}$ of ${\ff X}$, the subspace ${\ff X}_{{\rm Kr}({{Z}})}$ of $\X^*:=\{V^*:V \in \X\}$ is an affine scheme.}

      We are interested also in when a subspace of $\X$ has the structure of an affine scheme.  We characterize these subspaces  in Theorem~\ref{affine scheme} as the inverse closed subspaces $Z$ of $\X$ such that $A = \bigcap_{V \in Z}V$ is a Pr\"ufer domain with quotient field $F$.  Motivated by this characterization,    
  we say a subset ${{Z}}$ of ${\ff X}$ is {\it affine} if $A=\bigcap_{V \in Z}V$ is a Pr\"ufer domain having quotient field $F$.  
  This  is a slight abuse of notation,
 since the possibly larger set $\inv(Z)$, but not ${{Z}}$ itself, is an affine scheme (in general, the affine set  ${{Z}}$ is only a subset of an affine scheme).  
 Thus an affine set is  inverse dense  in an affine scheme.  

By the above remarks, there is  a canonical way to associate to each ${{Z}} \subseteq {\ff X}$ an affine set.   Define   $$ {{Z}}^*= \{V^*:V \in  {{Z}}\}.$$
Then, since $\Kr({{Z}}) = \bigcap_{V \in {{Z}}^*}V$, it follows that ${{Z}}^*$ is an affine subset of ${\X}^*=\{V^*:V \in {\ff X}\}$.  In Section~\ref{inverse section} we use this observation to describe subsets of ${\ff X}$ that are closed in the inverse topology.   
 By way of motivation, we mention here some examples of affine subsets of ${\ff X}$.

\begin{example}  \label{affine examples}
{\em 
\begin{itemize}
\item[]

\item[(1)]  Any finite subset of ${\ff X}$ is affine \cite[(11.11), p.~38]{N}.

\item[(2)]  If $D$ contains a field and ${{Z}} \subseteq {\ff X}$ has cardinality less than the cardinality of this field, then $A=\bigcap_{V \in Z}V$ is a Pr\"ufer domain, so that if also $A$ has quotient field $F$, then  ${{Z}}$ is affine \cite[Theorem 6.6]{OR}.  

\item[(3)]  If ${{Z}} \subseteq {\ff X}$ has the property that each valuation ring in ${{Z}}$ has a formally real residue field (i.e., $-1$ is not a sum of squares in the residue field), then ${{Z}}$ is an affine set. (The ring $A=\bigcap_{V \in Z}V$ is known in the literature of real algebraic geometry as the {\it real holomorphy ring} of $F/D$; see for example, \cite{Becker, Berr, BK, BS, Schu}.)  This is a special case of a more general phenomenon: If  there exists a nonconstant monic polynomial in $D[T]$ having no root in a residue field of any $V \in {{Z}}$, then ${{Z}}$ is affine.  This result has been proved independently and in various forms by several authors, including A.~Dress, R.~Gilmer, K.~A.~Loper and P.~Roquette; see \cite[p.~332]{OlbH} for precise references and a discussion of this result.   

\item[(4)]  If $D$ contains a field $k$ that is not algebraically closed and every valuation ring in ${{Z}} \subseteq {\ff X}$ has residue field $k$, then ${{Z}}$ is affine.  This is a special case of the general result in (3).  
\end{itemize}
}
\end{example}

   Returning to the case where ${{Z}} = {\ff X}$, we have (see \cite[Theorem 2.3]{HK}), 
    $$({\ff X}^*)_{{\rm Kr}(\X)} = \X^*.$$ 
    Thus since $\Kr(\X)$ is a Pr\"ufer domain, the stalks on $\Spec(\Kr(\X))$ are precisely the valuation rings in $\X^*$.   In this way, we obtain a  morphism of locally ringed spaces
    $\kappa=(k,k^\#):\X^* \rightarrow \X$ defined in the following way.  The map  $k:\X^* \rightarrow \X$ is defined by $k(V) = V \cap F$ for each $V \in \X^*$, and the morphism of sheaves $k^\#:\OO_{\X} \rightarrow k_*\OO_{\X^*}$ is defined for each nonempty open subset $U$ of $\X$  by $$k^\#_U:\OO_{\X}(U) \rightarrow \OO_{\X^*}(k^{-1}(U)):s \mapsto s$$ for each $s \in  \OO_{\X}(U)$. The map $k$ is a homeomorphism; see \cite{DF} for the affine case and  \cite[Corollary 3.6]{FFL} or \cite[Proposition 2.7]{HK}   for the general case.

     This accomplishes our first goal of mirroring  ${\ff X}$ with the affine scheme $\Spec(\Kr(\X))$, which in essence contains all of the valuation theory of $F/D$.  Although this affine scheme is not birationally equivalent to the object ${\ff X}$ (for it has function field $F(T)$ rather than $F$), we can change bases for each projective model $X$ of $F/D$ to produce a projective model $X^*$ of the extension $F(T)/D[T]$.  This is done via the {\it Nagata function ring}, $$D^*:= \left\{\frac{f}{g}:f,g \in D[T], g\ne 0 {\mbox{ and }} c(g) = D\right\},$$  where $c(g)$ denotes the content of the polynomial $g$. (The ring $D^*$ is often denoted $D(T)$, but to be consistent with our other notation, we use $D^*$ rather than $D(T)$.)  We let also $F^* = F(T)$.     
 The ring $D^*$
 is   a faithfully flat extension of $D$.  Therefore, 
  faithfully flat base extension produces the desired projective model: $$X^* = X \times_{{\textrm{Spec}}(D)} \Spec(D^*).$$  Thus when $X = \bigcup_{i=1}^n\Spec(D_i)$, we obtain that $X^*$ can be identified with $\bigcup_{i=1}^n\Spec(D^*[D_i]),$ and hence $X^*$ is a projective model of $F^*/D^*$. 
   In the next proposition, we collect many of the preceding observations into a single diagram.  
    
    \begin{proposition}\label{KFR}\label{KFR inverse} 
  With $X$ a projective model of $F/D$, there is a commutative diagram, 
  $$\begin{CD} \Spec(\Kr(\ff X)) & @>{\lambda}>> & \X^* & @>{\kappa}>> & {\ff X} \\
\:& \:& \: & \: & @VV{\delta^*}V & \: & @VV{\delta}V \\
\:& \:& \: & \: & X^* &  @>>{\pi}> & X,\end{CD}$$ where:
\begin{itemize}

\item[{\em (a)}]  $\lambda$ is the isomorphism of locally ringed spaces induced by localization:  $P \mapsto \Kr(\X)_P$ for each $P \in \Spec(\Kr(\X))$; 

\item[{\em (b)}] $\kappa:\X^* \rightarrow \X$ (which is defined above) is a morphism of locally ringed spaces that is a homeomorphism on the underlying topological spaces; 

\item[{\em (c)}] $\pi$ is the  surjective  morphism of schemes given by faithfully flat base extension;  

\item[{\em (d)}] $\delta$ is the domination morphism and hence is a closed surjective morphism of locally ringed spaces; and 

\item[{\em (e)}] $\delta^*$ is the  morphism of locally ringed spaces given by the restriction to $\X^*$ of the domination morphism 
from the Zariski-Riemann space of $F^*/D^*$ to $X^*$.  
   \end{itemize}
Each induced continous map in (a)--(e)  on the underlying topological spaces  is a spectral map, and hence is continuous in the Zariski, inverse and patch  topologies.   
  \end{proposition}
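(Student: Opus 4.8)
The plan is to treat parts (a)--(e) as an assembly of facts recorded earlier, and then to prove the two statements carrying real content: the commutativity of the square and the spectrality of each of the five maps. Part (a) is immediate: $\Kr(\X)$ is a Pr\"ufer domain with quotient field $F^*=F(T)$ and $(\X^*)_{\Kr(\X)}=\X^*$, so the fact recorded above that $\X_R\to\Spec(R)$ is an isomorphism of locally ringed spaces for any Pr\"ufer domain $R$ applies verbatim, with $\lambda\colon P\mapsto\Kr(\X)_P$. Part (b) is the homeomorphism $\kappa$ discussed above, and part (d) is Proposition~\ref{lr}(2). For (c), faithful flatness of $D^*$ over $D$ makes the base extension $\pi$ faithfully flat, hence surjective. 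For (e), I would first verify that $\X^*$ is contained in the Zariski-Riemann space $W$ of $F^*/D^*$: for $f/g\in D^*$ with $c(g)=D$, the coefficients of $g$ generate the unit ideal of $D$ and so cannot all lie in ${\ff M}_V$, whence $g\notin{\ff M}_V[T]$ is a unit in $V^*=V[T]_{{\ff M}_V[T]}$, giving $D^*\subseteq V^*$; then $\delta^*$ is by definition the restriction to $\X^*$ of the domination morphism $\Delta\colon W\to X^*$.

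To check commutativity of the square on a point $V^*\in\X^*$, write $X=\bigcup_i\Spec(D_i)$ with $D_i=D[\frac{f_1}{f_i},\dots,\frac{f_n}{f_i}]$ and $X^*=\bigcup_i\Spec(D^*[D_i])$, and fix $i$ with $D_i\subseteq V$. Then $\delta(\kappa(V^*))={\ff M}_V\cap D_i$, while $\pi(\delta^*(V^*))=({\ff M}_{V^*}\cap D^*[D_i])\cap D_i={\ff M}_{V^*}\cap D_i$, and these agree because ${\ff M}_{V^*}\cap F={\ff M}_V$ and $D_i\subseteq F$. Commutativity on structure sheaves is automatic, since each sheaf morphism in the diagram is the inclusion $s\mapsto s$ of sections viewed inside $F$ or $F^*$.

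For the spectrality assertions, the maps $\lambda$ and $\kappa$ are homeomorphisms of spectral spaces and hence spectral, and $\delta$ is a closed spectral map by Proposition~\ref{lr}(2). To see that $\pi$ is spectral I would argue locally over the finite affine cover $X=\bigcup_i\Spec(D_i)$: since $\pi^{-1}(\Spec(D_i))=\Spec(D^*[D_i])$ and any morphism of affine schemes is spectral (preimages of basic opens are basic, hence quasicompact), and since the quasicompact opens of the spectral space $X$ are stable under finite intersection, the preimage under $\pi$ of a quasicompact open $U$ of $X$ is the finite union of the quasicompact sets $\pi^{-1}(U\cap\Spec(D_i))$, hence quasicompact.

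The crux is $\delta^*$, and the plan is to present it as a composite of spectral maps rather than to argue about restrictions by hand. Because $\Kr(\X)$ is Pr\"ufer with quotient field $F^*$, every valuation ring of $F^*$ containing $\Kr(\X)$ is a localization of $\Kr(\X)$, and these localizations are exactly the rings $V^*$; hence $\X^*=W_{\Kr(\X)}=\bigcap_{s\in\Kr(\X)}W_{\{s\}}$ is an intersection of basic quasicompact open subsets of $W$, so $\X^*$ is inverse closed, and therefore patch closed, in $W$. By the characterization in Section 2 of patch closed subsets as those whose inclusion map is spectral, the inclusion $\iota\colon\X^*\hookrightarrow W$ is spectral; since $\Delta$ is spectral by Proposition~\ref{lr}(2) applied to $F^*/D^*$, the composite $\delta^*=\Delta\circ\iota$ is spectral. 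The last sentence of the proposition then follows, since every spectral map is continuous in the Zariski topology by definition and in the inverse and patch topologies by Lemma~\ref{pick up extra}(1). The main obstacle is precisely this identification of $\X^*$ as an inverse closed subspace of $W$, which is what turns the restriction $\delta^*$ into a composite of maps already known to be spectral.
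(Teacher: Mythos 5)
Your proposal is correct and takes essentially the same route as the paper: both treat (a)--(e) as an assembly of previously recorded facts, verify commutativity via the chart description $X^*=\bigcup_i\Spec(D^*[D_i])$, and check spectrality map by map (the paper handles $\pi$ by noting it is of finite type, hence quasicompact, which is your local argument in compressed form). The one place you go beyond the paper is the spectrality of $\delta^*$, where the paper simply appeals to Remark~\ref{after prop}; your identification of $\X^*$ as an inverse closed (hence patch closed) subspace of the Zariski-Riemann space of $F^*/D^*$, so that the inclusion is spectral and $\delta^*$ is a composite of spectral maps, is a legitimate and more explicit justification of that step.
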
 
 
 \begin{proof}
 Statements (a) and (b) were discussed above, and statement (c) is clear from the construction of $X^*$.  Statement (d) was discussed in Section~\ref{Zariski-Riemann section}, and statement (e) follows similarly. The commutativity of the  diagram is clear in light of the representation $X^* = \bigcup_{i=1}^n\Spec(D^*[D_i]).$  To see that all the maps are spectral maps, observe that 
 since the mappings $\lambda$ and $\kappa$ are homeomorphisms in the Zariski topology, it is clear that they are homeomorphisms in the inverse and patch topologies.  By Remark~\ref{after prop}, the continuous maps induced by $\delta$ and $\delta^*$ are spectral. 
  Moreover, it is clear that $\pi$ is continuous, and since $\pi$ is a finite type, hence  quasicompact, morphism,
  it induces a 
  spectral map on the underlying topological spaces. Thus   by Lemma~\ref{pick up extra}(1), this map is continuous 
 in the inverse and patch topologies.    
 \end{proof}

 \begin{remark} {\em The top row of the diagram in the proposition is also emphasized in \cite[Corollary 3.6 and Proposition 3.9]{FFL}.   
  That $\delta$ is continuous in the patch topology is proved in \cite[Proposition~3.9]{FFL} using an interpretation of patch closure   involving ultrafilters   (discussed in  Remark~\ref{uf remark}).   The argument there makes precise how $\delta$ preserves ultrafilter limits of valuation rings.   
}
 \end{remark}




   By the proposition, for each subset ${{Z}}$ of $ {\ff X}$, the subspace ${{Z}}^*$ of $\X^*$ is homeomorphic to ${{Z}}$ via the restriction of $\kappa$ to ${{Z}}^*$.  We rely on this observation  in the next section, since it allows us when considering topological properties of ${{Z}}$ to replace a non-affine subset of ${\ff X}$ with an affine subset of $\X^*$ homeomorphic to ${{Z}}$.

  \begin{corollary} \label{KFR2} Let $X$ be a projective model of $F/D$, and let ${{Z}}$ be a subspace of ${\ff X}$.  Then the continuous map $X^* \rightarrow X$ induced by the morphism $\pi:X^* \rightarrow X$ restricts to a homeomorphism $X^*({{Z}}^*) \rightarrow X({{Z}})$.  
  \end{corollary}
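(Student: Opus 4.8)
The plan is to exhibit an explicit continuous section of the map underlying $\pi$ and to identify its image with the image of $\delta^*$; the corollary then follows by pure restriction. Write $p:X^* \to X$ for the continuous map underlying $\pi$, and recall the representations $X = \bigcup_{i=1}^n\Spec(D_i)$ from Section~\ref{Zariski-Riemann section} together with the identification $X^* = \bigcup_{i=1}^n\Spec(D^*[D_i])$ established above, where $D^*[D_i] = D_i[T]_S$ and $S = \{g \in D[T]: g \ne 0, \ c(g) = D\}$ is the multiplicative set defining the Nagata ring $D^*$. First I would record the effect of the Gaussian extension on centers. If $V \in \X$ is centered at ${\ff p} \in \Spec(D_i)$, then $V^* = V[T]_{{\ff M}_V[T]}$ contains $D_i[T]_S$: each $g \in S$ has $c(g) = D$, hence $g \notin {\ff M}_V[T]$, so $g$ is a unit in $V^*$, while $D_i[T] \subseteq V^*$ trivially. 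Computing ${\ff M}_{V^*} \cap D_i[T]_S = {\ff M}_V V^* \cap D_i[T]_S$ shows that the center of $V^*$ on $\Spec(D_i[T]_S)$ is exactly the extended prime ${\ff p}^* := {\ff p}D_i[T]_S = {\ff p}[T]_S$. Consequently the image of $\delta^*$ is the set $\Sigma := \{{\ff p}^* : {\ff p} \in X\}$, with $p({\ff p}^*) = {\ff p}$, so that $X^*(Z^*) = \{{\ff p}^*:{\ff p}\in X(Z)\}\subseteq \Sigma$.

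Next I would define $\sigma : X \to X^*$ by $\sigma({\ff p}) = {\ff p}^*$ and prove it is a continuous section of $p$, i.e.\ $p \circ \sigma = \mathrm{id}_X$. Continuity is checked chart by chart on $\Spec(D_i)$: for $h = \sum_j a_j T^j \in D_i[T]$, the identity ${\ff p}[T]_S \cap D_i[T] = {\ff p}[T]$ gives $h \notin {\ff p}^* \iff c(h) \not\subseteq {\ff p} \iff {\ff p}\in \bigcup_j\{{\ff q}\in\Spec(D_i): a_j \notin {\ff q}\}$, so the $\sigma$-preimage of the basic open determined by $h$ is the open set $\bigcup_j\{{\ff q}:a_j\notin{\ff q}\}$. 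Compatibility of these local sections on overlaps is inherited from the identifications defining the models (equivalently, from the base-change construction $X^* = X \times_{\Spec(D)}\Spec(D^*)$). Since a continuous section is automatically a topological embedding, $\sigma$ restricts to a homeomorphism $X \xrightarrow{\ \sim\ } \Sigma$ whose inverse is $p|_{\Sigma}$.

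Finally I would restrict the homeomorphism. From the commutativity of the diagram in Proposition~\ref{KFR}, together with $k(V^*) = V^* \cap F = V$ (so $k(Z^*) = Z$), one gets $p(X^*(Z^*)) = p(d^*(Z^*)) = d(k(Z^*)) = d(Z) = X(Z)$; and $X^*(Z^*) = \sigma(X(Z)) \subseteq \Sigma$ by the center computation. Hence the restriction of the homeomorphism $p|_{\Sigma}$ to the subspace $X^*(Z^*)$ is a homeomorphism onto $X(Z)$, which is exactly the assertion.

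The main obstacle is the continuity of the section $\sigma$: the remainder is bookkeeping around the diagram, but the fact that $p$ restricts to a \emph{homeomorphism} on $\Sigma$ (rather than a mere continuous bijection) rests on the content computation above. It is essential that we work with the extended primes ${\ff p}^*$ cut out by the content-$D$ set $S$ and not with arbitrary points of the fibers of $p$, which are in general positive-dimensional. I would also take care that the argument uses only the explicit localization description, so that no Noetherian hypothesis on $\Spec(D)$ is needed, consistent with the standing assumptions of this section.
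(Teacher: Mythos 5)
Your proof is correct, and it takes a genuinely different route from the paper's. The paper gets surjectivity of $X^*(Z^*)\rightarrow X(Z)$ from the commutative diagram and then proves injectivity by contradiction: assuming $V,W\in Z$ have the same center on $X$ but $V^*,W^*$ have different centers on $X^*$, it produces $f,g\in D_i[T]$ with $c(g)=D_i$ and $f/g\in {\ff M}_{V^*}\smallsetminus{\ff M}_{W^*}$ and derives a contradiction from the content formula $fV^*=c(f)V^*$. You instead compute once and for all that the center of $V^*=V[T]_{{\ff M}_V[T]}$ on the chart $\Spec(D^*[D_i])=\Spec(D_i[T]_S)$ is the extended prime ${\ff p}[T]_S$ of the center ${\ff p}$ of $V$, and then package this as a continuous section $\sigma:{\ff p}\mapsto {\ff p}^*$ of $\pi$ with $X^*(Z^*)=\sigma(X(Z))$. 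Both arguments rest on the same Gauss-lemma-type computation with contents, but your organization buys something concrete: a continuous section is automatically an embedding, so you get bicontinuity of the restricted map for free, whereas the paper's written argument literally produces only a continuous bijection and leaves the continuity of the inverse implicit. Your chart-by-chart verification of the continuity of $\sigma$ (via ${\ff p}[T]_S\cap D_i[T]={\ff p}[T]$ and the decomposition of $\sigma^{-1}$ of a basic open into the opens $D(a_j)$ determined by the coefficients) is sound, as is the observation that no Noetherian hypothesis is needed; the only caveat is that the well-definedness of $\sigma$ on overlaps deserves the one-line remark that $(D_i[T]_S)_{{\ff p}[T]_S}=\OO_{X,x}[T]_{{\ff m}_{X,x}[T]}$ depends only on the local ring at the identified point, which your appeal to the base-change description supplies.
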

  
  \begin{proof}
  Since by Proposition~\ref{KFR} the domination mapping ${{Z}}^* \rightarrow \Spec(\Kr({{Z}}))$ is a homeomorphism, it follows from the  proposition that $X^*({{Z}}^*) \rightarrow X({{Z}})$ is onto.  To see that this mapping is one-to-one, suppose that $V,W \in {{Z}}$ such that $V$ and $W$ are centered on the same point of $X$, but $V^*$ and $W^*$ are centered on different points of $X^*$.  Write $X = \bigcup_{i}\Spec(D_i)$.  Then since $V$ and $W$ are centered on the same point of $X$, and $X^* = \bigcup_{i}\Spec(D^*[D_i])$,  it follows that there exists $i$ such that $V^*$ and $W^*$ are centered on different points of $\Spec(D^*[D_i])$.  Thus since $D^*[D_i] \subseteq D_i^*$, it must be that $V^*$ and $W^*$ are centered on different points of $D_i^*$, and there exist $f,g \in D_i[T]$ such that $c(g) = D_i$ and $f/g \in {\ff M}_{V^*} {\smallsetminus} {\ff M}_{W^*}$.  Then 
 $g/f \in W^*$, so that since $c(g) =D_i$, we have $1 \in c(g)W^* = gW^* \subseteq fW^* = c(f)W^*$.  Hence $c(f) \not \subseteq {\ff M}_{W^*}$.  On the other hand,  
  $fV^* = c(f)V^*$, so that $c(f) \subseteq  {\ff M}_V$.  Therefore,  $V$ and $W$ are centered on different points of $\Spec(D_i) \subseteq X$, a contradiction which implies the corollary.  
  \end{proof}

  In particular, the subspace $X^*(\X^*)$ of $X^*$ is homeomorphic to $X$.

  \begin{remark}  {\em  
  Let $k$ be a field, let $F$ be an extension of $k$, and let $L$ be an extension of $F$.  Denote by $\X_{F/k}$ and $\X_{L/k}$ the Zariski-Riemann spaces of $F/k$ and ${L/k}$, respectively.  In \cite[Theorem 1.4]{Duc}, Ducros shows that the map  $\X_{L/k} \rightarrow \X_{F/k}:V \mapsto V \cap F$ is a continuous map that is, in our terminology, closed with respect to the inverse topology.  Ducros also proves a graded version of this result \cite[Theorem 1.5]{Duc}.  Another version of functoriality is given in \cite[Proposition 4]{DF}, where it is shown that when $R \subseteq S$ is an extension of domains, then the Kronecker fuction ring of $R$ embeds in that of $S$.  
  }
  \end{remark}



\section{Inverse closed subspaces of $\X$}

\label{inverse section}

In this section we consider in more detail inverse closed subpaces of $\X$.
 We show first that these subspaces possess a locally ringed space structure inherited in an obvious way from $\X$ and that with this structure, they are expressible as a projective limit of locally ringed inverse closed subspaces of projective models.

Let $X$ be a set, and let $\{A_x:x \in X\}$ be a collection of quasilocal rings
 with $D \subseteq A_x \subseteq F$.  Then the {\it Zariski topology} on $X$ has as an open basis the sets of the form $\{x \in X:S \subseteq A_x\}$, where $S$ is a finite subset of $F$ \cite[Chapter VII, \S17]{ZS}. When 
  $X$ is a subset of $\X$  and $A_V = V$ for each $V \in X$, then this topology is precisely the subspace topology that $X$ inherits from $\X$. Similarly, when $X$ is a subset of a  projective model $Y$ of $F/D$  and $A_x = \OO_{Y,x}$ for each $x \in X$, then this topology  is the subspace topology on $X$.  
  
 In general, with $X$ a set and $\{A_x:x \in X\}$ a collection of quasilocal $D$-subalgebras over $F$, define a sheaf $\OO_X$ on $X$ by  
 $\OO_X(\emptyset) =  F$, and  
   for each nonempty open subset $U $ of $X$, $\OO_X(U) = \bigcap_{x \in U}A_x$, where for nonempty open sets $V \subseteq U$, the restriction map $\rho^U_V:\OO_X(U) \rightarrow \OO_X(V)$ is simply set inclusion. 
 Moreover, for each $x \in X$, the stalk $\OO_{X,x}$ of $\OO_X$ at $x$ is $A_x$.

An inverse closed subset  of a projective model is a  spectral space (Proposition~\ref{Noetherian spectral}), as are the 
 inverse closed subsets of  $\X$ (Proposition~\ref{pick up extra}(1)).  
 Thus we deduce the following lemma from  the preceding discussion.

\begin{lemma} \label{discuss} If $X$ is a projective model of $F/D$, then every nonempty inverse closed subspace $Y$ of $X$ is a locally ringed spectral space with structure sheaf $\OO_Y$  defined above. Similarily, every nonempty inverse closed subspace $Z$ of $\X$ is a locally ringed spectral space with structure sheaf $\OO_Z$. \qed
\end{lemma}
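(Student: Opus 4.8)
The plan is to deduce the lemma by combining the sheaf-theoretic construction described immediately before its statement with the spectrality and irreducibility facts of Section~2. I would treat the two assertions in parallel, since they differ only in the ambient space and in the chosen collection of quasilocal rings.

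First I would settle the underlying topology. In the projective model case, $X$ is a spectral space by Proposition~\ref{Noetherian spectral}, and since $Y$ is inverse closed we have $Y = \inv(Y)$, so Proposition~\ref{spectral basics}(1) shows that $Y$ is a spectral space in the subspace topology; in the Zariski--Riemann case the same reasoning applies with $\X$ in place of $X$, using that $\X$ is a spectral space (Remark~\ref{after prop}) and $Z = \inv(Z)$.

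Next I would identify the structure sheaf and its stalks. Each stalk $\OO_{X,y}$ (for $y \in Y$), viewed inside $F$, is a quasilocal $D$-subalgebra of $F$, and each valuation ring $V \in Z$ is likewise a quasilocal $D$-subalgebra of $F$; thus both collections are of the type treated in the discussion preceding the lemma. As noted there, the Zariski topology determined by such a collection agrees with the subspace topology, and the presheaf $\OO_Y(U) = \bigcap_{y \in U}\OO_{X,y}$ (respectively $\OO_Z(U) = \bigcap_{V \in U} V$) has as its stalk at a point precisely the associated quasilocal ring. Hence the stalks are local, and the only remaining point is that $\OO_Y$ and $\OO_Z$ are genuinely sheaves rather than merely presheaves.

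I expect this sheaf axiom to be the one delicate step, and it is exactly where irreducibility enters. A projective model is an integral scheme and so is irreducible, while $\X$ is irreducible because the trivial valuation ring $F$ lies in every nonempty basic open set and is therefore a generic point; in both cases Proposition~\ref{inverse lrs} then forces the inverse closed subspace ($Y$, resp.\ $Z$) to be irreducible, so that any two nonempty open subsets meet. Since the restriction maps are mere inclusions of subrings of $F$, the separation axiom is automatic, and for the gluing axiom the compatibility of sections on the nonempty pairwise overlaps forces all the local sections to coincide as a single element of $F$, whose membership in $\bigcap_i \OO_Y(U_i) = \OO_Y(\bigcup_i U_i)$ is immediate. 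This yields that $\OO_Y$ and $\OO_Z$ are sheaves with quasilocal stalks, so $(Y,\OO_Y)$ and $(Z,\OO_Z)$ are locally ringed spectral spaces, completing both assertions.
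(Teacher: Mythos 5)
Your proposal is correct and follows exactly the route the paper intends: the lemma is stated with no written proof precisely because it is meant to be assembled from the spectrality of inverse closed subsets (Proposition~\ref{spectral basics}(1)), the irreducibility supplied by Proposition~\ref{inverse lrs} (which the paper introduces expressly for this purpose), and the identification of the stalks with the given quasilocal rings in the discussion preceding the lemma. Your observation that irreducibility is the point where the sheaf axiom could fail, and that it holds here because $X$ is integral and $F$ is a generic point of $\X$, is exactly the argument the paper relies on (compare the proof of Theorem~\ref{affine scheme}).
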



In particular, when $Z$ is a subspace of $\X$, then $\inv(X(Z))$ is a locally ringed space. Implicit in the next theorem (and explicit in its proof) is the fact that when ${\ff S}$ is a dominant system of projective models of $F/D$, then $\{\inv(X(Z)):X \in {\ff S}\}$ is a projective system of locally ringed spaces.

\begin{theorem} \label{inverse case} Let $Z$ be a subspace of $\X$, and let ${\ff S}$ be  a  dominant system of projective models of $F/D$. Then, as locally ringed spaces, 
$$\inv({{Z}}) = \varprojlim\: \inv(X({{Z}})),$$
where $X$ ranges over ${\ff S}$.\end{theorem}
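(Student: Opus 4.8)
The plan is to take the topological identification already in hand from Lemma~\ref{patch thm}, namely $\inv(Z) = \varprojlim \inv(X(Z))$ as spectral spaces, and to promote it to an identification of locally ringed spaces by comparing structure sheaves stalkwise. The topological statement requires no Noetherian hypothesis, so this route keeps the argument valid at the generality stated.

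First I would verify that $\{\inv(X(Z)):X \in {\ff S}\}$ is genuinely a projective system of locally ringed spaces over which $\inv(Z)$ sits compatibly. Each $\inv(X(Z))$ carries the locally ringed structure of Lemma~\ref{discuss}, with stalk $\OO_{X,x}$ at a point $x$. If $Y$ dominates $X$ in ${\ff S}$, then Lemma~\ref{pick up extra}(2) gives $d(\inv(Y(Z))) \subseteq \inv(X(Z))$, so the underlying map of the domination morphism $\delta^Y_X$ restricts to these subspaces; the accompanying sheaf morphism, being the inclusion of subrings of $F$, restricts as well, yielding a morphism of locally ringed spaces $\inv(Y(Z)) \to \inv(X(Z))$. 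The same reasoning applied to $\delta^\X_X$, using $d(\inv(Z)) \subseteq \inv(X(Z))$, produces a compatible family of morphisms $\inv(Z) \to \inv(X(Z))$ and hence, by the universal property, a canonical morphism $\phi\colon \inv(Z) \to \varprojlim \inv(X(Z))$ of locally ringed spaces.

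The second step is to show $\phi$ is an isomorphism. On underlying spaces $\phi$ is a homeomorphism by Lemma~\ref{patch thm}. For the sheaves, recall that the structure sheaf of the projective limit of locally ringed spaces is the direct limit $\varinjlim_X d_X^{-1}\OO_{\inv(X(Z))}$ (this is the construction underlying Proposition~\ref{lr}(4)), so that its stalk at a point $V \in \inv(Z)$ is the filtered colimit $\varinjlim_X \OO_{X, x_V}$, where $x_V$ denotes the center of $V$ in $X$. Here it is essential that $x_V \in \inv(X(Z))$ for each $X$, which holds because $d(\inv(Z)) \subseteq \inv(X(Z))$; this is what guarantees the colimit is taken over the correct stalks. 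By Proposition~\ref{lr}(3) this filtered union is precisely $V$, which is the stalk of $\OO_{\inv(Z)}$ at $V$ by Lemma~\ref{discuss}, and the map induced by $\phi$ on stalks is the identity. Since $\phi$ is a homeomorphism inducing isomorphisms on all stalks, it is an isomorphism of locally ringed spaces.

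I expect the main obstacle to be the bookkeeping around the structure sheaf of the projective limit: one must correctly identify its stalks as the filtered colimits $\varinjlim_X \OO_{X, x_V}$ and verify that the directedness of these local rings under inclusion, a consequence of ${\ff S}$ being a dominant system, lets Proposition~\ref{lr}(3) collapse the colimit to $V$. A secondary point requiring care is that the colimit genuinely runs over the stalks of the restricted sheaves $\OO_{\inv(X(Z))}$ rather than of $\OO_X$; this is harmless precisely because the two sheaves share the same stalk at any point of $\inv(X(Z))$, and the centers $x_V$ land in $\inv(X(Z))$ by Lemma~\ref{pick up extra}(2).
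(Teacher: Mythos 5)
Your proposal is correct and follows essentially the same route as the paper's proof: both establish that $\{\inv(X(Z))\}$ forms a projective system of locally ringed spaces via Lemma~\ref{pick up extra}(2) and the restricted domination morphisms, invoke Lemma~\ref{patch thm} for the topological identification, and use Proposition~\ref{lr}(3) together with the fact that the structure sheaves are induced from those of the models to conclude at the level of locally ringed spaces. Your version merely makes explicit the stalkwise computation $\varinjlim_X \OO_{X,x_V} = V$ that the paper leaves implicit in its final sentence.
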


\begin{proof}  
For each $X \in {\ff S}$, let $X' = \inv(X(Z))$. By Lemma~\ref{discuss}, $X'$ is a locally ringed space with structure sheaf $\OO_{X'}$. 
 Let $X$ and $Y$ be projective models in ${\ff S}$ such that $Y$ dominates $X$, and let $\delta=(d,d^\#):Y \rightarrow X$ be the domination morphism. 
   Define a morphism $\phi=(f,f^\#):Y' \rightarrow X'$ in the following way. Let $f$ be the restriction of $d$ to $Y'$, so that $f$ is a continuous map from $Y'$ to $X$. We claim that $f(Y') \subseteq X'$. Now $d$ is a spectral map (for as a proper morphism, $\delta$ is quasicompact), and hence  by Lemma~\ref{pick up extra}(2),
     $f(Y') = d(\inv(Y(Z))) \subseteq \inv(d(Y(Z)))= \inv(X(Z)) = X'$, which shows that $f$ is a continuous map from $Y'$ into $X'$. 
     Next, define the sheaf morphism $f^\#:\OO_{X'} \rightarrow f_*\OO_{Y'}$ for each open set $U$ of $X'$ by $f^\#_U:\OO_{X'}(U) \rightarrow \OO_{Y'}(d^{-1}(U)):s \mapsto s$. This makes sense because  $\OO_{X'}(U) \subseteq\OO_{Y'}(d^{-1}(U))$. If $y \in Y'$, then the stalk of $Y'$ at $y$  is $\OO_{Y',y} = \OO_{Y,y}$, and from this it follows that  since $\delta$ is a morphism of locally ringed spaces, then so is $\phi$.    
   This shows that $\{X':X \in {\ff S}\}$ is a projective system of locally ringed spaces.    
  Now by Lemma~\ref{patch thm}, $\inv(Z)$ is the projective limit of the topological  spaces $X' = \inv(X(Z))$, where $X$ ranges over $ {\ff S}$.  
      Moreover, it follows from Proposition~\ref{lr}(3)  and the fact that the structure sheaf on each locally ringed space $X'$ is induced by the structure sheaf on $X$ that $\inv(Z)$ is the projective limit of $\{X':X \in {\ff S}\}$ in the category of locally ringed spaces.
 \end{proof}

\begin{remark}{\em 
Though it is a locally ringed space, an inverse closed subspace of a projective model of $F/D$ need not be a scheme. For example, when $D = k[T_0,T_1]$, with $k$ a field and $T_0$ and $T_1$ indeterminates for $k$, then the set $X$ of all prime ideals of $D$ of height $\leq 1$ is inverse closed in the projective model $\Spec(D)$ (since it is closed under generalizations and all open subsets of $\Spec(D)$ are quasicompact), but $X$ is not a scheme with respect to the structure sheaf $\OO_X$. For suppose $U$ is a nonempty open subset of $\Spec(D)$ such that $X \cap U$ is an affine scheme. Then 
$X \cap U$  contains all but at most finitely many height one prime ideals of $D$, and hence since $D$ is a UFD 
there is $f \in D$ such that $X \cap U = \{P \in \Spec(D):f \not \in P\}$. But then 
the ring of global sections of $X \cap U$  is  $D_f$, and this ring has Krull dimension $2$, while the stalks of $X \cap U$ all have dimension one. So $X$ cannot be a scheme.} 
\end{remark}


 The  quasicompact open sets in ${\ff X}$ are of the form $\X_{D_1} \cup \cdots \cup \X_{D_n}$, where $D_1,\ldots,D_n$ are finitely generated $D$-subalgebras of $F$. 
One of the advantages of working with affine subsets of ${\ff X}$ is that  it follows from Proposition~\ref{top prelim} that when  ${{Z}}$ is affine, then ${{Z}}$ is inverse closed if and only if ${{Z}} = \X_R$ for some ring $R$ with quotient field $F$.  That a subset of the form ${\ff X}_R$ is inverse closed is always true, but the converse, that an inverse  closed set has this form, requires additional hypotheses, such as  that ${{Z}}$ is affine.  For example, consider the case where  $D$ is a  local Noetherian UFD with quotient field $F$ and Krull dimension $>1$.  Let $p$ be a prime element of $D$.  Then ${\ff X}_{D[1/p]} \cup \X_{D_{(p)}}$ is an inverse closed subset of ${\ff X}$ that is not of the form $\X_R$ for any overring $R$ of $D$. As statement (3) of the next lemma shows, such examples do not arise for affine subspaces.



\begin{lemma} \label{qc open} Suppose $D$  is a Pr\"ufer domain  with  quotient field $F$.  

\begin{itemize}

\item[{\em (1)}] If $S$ and $T$ are $D$-submodules of $F$, then $\X_{S \cap T} = \X_S \cup \X_T$.  

\item[{\em (2)}]  A subset  of $\X$ is open and quasicompact in the Zariski topology if and only if it is of the form ${\ff X}_{S}$ for some finite subset $S$ of  $F$.  

\item[{\em (3)}]  A subset  of $\X$ is inverse closed in  $\X$ if and only if it is of the form $\X_S$ for some overring $S$.
\end{itemize}
\end{lemma}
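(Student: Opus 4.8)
The plan is to prove the three parts in order, since (2) depends on (1) and (3) on (2). The guiding idea throughout is that the Pr\"ufer hypothesis lets me identify every $V\in\X$ with a localization $D_P$, $P\in\Spec(D)$, and this localization structure is what makes all three statements work.

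For (1) the inclusion $\X_S\cup\X_T\subseteq\X_{S\cap T}$ is trivial, so the work is the reverse inclusion. I would argue by contradiction: suppose $V=D_P$ contains $S\cap T$ but contains neither $S$ nor $T$, and choose $s\in S$ and $t\in T$ with $s,t\notin V$. Because $V$ is a valuation ring, one of $s/t,\,t/s$ lies in $V$; say $t/s\in V=D_P$, and write $t/s=a/b$ with $a,b\in D$ and $b\notin P$, so that $b$ is a unit of $V$. Then $bt=as$ belongs to $S\cap T$ (here I use that $S,T$ are $D$-submodules), yet its value equals that of $t$, which is negative, so $as\notin V$, contradicting $S\cap T\subseteq V$. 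I expect this denominator-clearing step to be the crux of the whole lemma: the Pr\"ufer hypothesis is used precisely to arrange that $t/s$ has a denominator $b$ that is a unit of $V$, and the conclusion fails for a general valuation ring $V$ that is not a localization of $D$.

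For (2) I would dispatch the reverse implication first: for finite $S$ the ring $D[S]$ is an overring of the Pr\"ufer domain $D$, hence again Pr\"ufer with quotient field $F$, so $\X_S=\X_{D[S]}$ is homeomorphic to $\Spec(D[S])$ and thus quasicompact, while being open by definition. For the forward implication, a quasicompact open set is a finite union of basic opens $\X_{S_1}\cup\cdots\cup\X_{S_n}$; writing $M_i$ for the finitely generated $D$-submodule of $F$ generated by $S_i$, so that $\X_{S_i}=\X_{M_i}$, repeated use of (1) collapses this union to $\X_{M_1\cap\cdots\cap M_n}$. I would then invoke the standard fact that over a Pr\"ufer domain a finite intersection of finitely generated fractional ideals is finitely generated, and take a finite generating set $S'$ of $M_1\cap\cdots\cap M_n$ to conclude $\X_{S'}$.

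Part (3) should then follow formally, using the observation (immediate from the definitions in Section 2) that a subset of $\X$ is inverse closed exactly when it is an intersection of quasicompact open sets. For the reverse direction, an overring $S$ gives $\X_S=\bigcap_{s\in S}\X_{\{s\}}$, an intersection of quasicompact opens by (2), hence inverse closed. Conversely, an inverse closed $Y$ can be written $Y=\bigcap_\alpha U_\alpha$ with each $U_\alpha$ quasicompact open; by (2) each $U_\alpha=\X_{S_\alpha}$ with $S_\alpha$ finite, and then $Y=\X_S$ for the overring $S=D[\bigcup_\alpha S_\alpha]$. Apart from the denominator-clearing argument in (1), the only nonformal input is the Pr\"ufer-theoretic fact about finitely generated intersections in (2); everything else is bookkeeping with basic opens and with the description of inverse closed sets as intersections of quasicompact opens.
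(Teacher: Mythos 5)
Your proof is correct, and for parts (2) and (3) it follows exactly the paper's route: collapse a quasicompact open to a single basic open via (1) together with the fact that a finite intersection of finitely generated fractional ideals of a Pr\"ufer domain is finitely generated, and then realize an inverse closed set as $\X_S$ for the overring generated by the finite sets defining the quasicompact opens. The only divergence is in (1), which you correctly identify as the crux. The paper localizes the modules: writing $V=D_P$ and $A=S\cap T$, it notes $A_P=S_P\cap T_P$ and that the $D_P$-submodules of $F$ are comparable, so $A_P$ equals $S_P$ or $T_P$, whence $S\subseteq V$ or $T\subseteq V$. You instead argue by contradiction with an explicit denominator-clearing computation, producing from $s\in S\setminus V$ and $t\in T\setminus V$ an element $bt=as\in S\cap T$ of negative value. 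Both arguments use the Pr\"ufer hypothesis in the same essential way (to write $V=D_P$); the paper's is a two-line appeal to flatness of localization and comparability of submodules over a valuation ring, while yours is more elementary and makes visible exactly where the $D$-module hypothesis on $S$ and $T$ enters. Your extra remark in (2) that $\X_S\cong\Spec(D[S])$ is quasicompact is fine (the paper takes quasicompactness of basic opens for granted, as it holds for any Zariski--Riemann space), though it uses the Pr\"ufer hypothesis where none is needed.
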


\begin{proof}
(1) Let $A = S \cap T$, and let $V \in \X$.  Then since $D$ is a Pr\"ufer domain, $V = D_P$ for some prime ideal $P$ of $D$.  Hence $A_P = S_P \cap T_P$, and since $D_P$ is a valuation domain, this forces $A_P = S_P$ or $A_P = T_P$.  Thus  $S \subseteq V$ or $T \subseteq V$.

(2) Let ${{Z}}$ be a quasicompact open subset of $\X$.  Then since the subsets of $\X$ of the form $\X_S$, with $S$ a finite subset of $F$,  constitute  a basis of open subsets of $\X$ in the Zariski topology,   there exist finite subsets $S_1,\ldots,S_n$ of  $F$   such that ${{Z}} = \X_{S_1} \cup \cdots \cup \X_{S_n}$. For each $i=1,\ldots,n$, let $I_i$ be the fractional ideal of $D$ generated by $S_i$.  
Then by (1), $Z = \X_{I_1 \cap \cdots \cap I_n}$.  
%
 Statement (2) now follows from the fact that since $D$ is a Pr\"ufer domain, the  intersection  of the finitely generated fractional ideals $I_1,\ldots,I_n$ is a finitely generated fractional ideal \cite[Proposition 21.4]{G}.

(3)  Suppose ${{Z}}$ is inverse closed in $\X$.  Then  by (2), ${{Z}}= \bigcap_{i}\X_{D_i}$ for a collection $\{D_i\}$ of overrings of $R$, and hence ${{Z}} = \X_{S}$, where  $S$ is the overring of $R$ generated by the rings $D_i$.  Conversely, if $S$ is an overring of $D$, then $\X_S = \bigcap_{s \in S}\X_{\{s\}}$, so that by (2), $\X_S$, as an intersection of inverse closed subsets, is inverse closed.          
\end{proof}

\begin{remark}  {\em   A domain $D$ with quotient field $F$ is {\it vacant} if it has a unique Kronecker function ring (for an explanation of the notion of a Kronecker function ring associated to a domain, see the discussion before Proposition~\ref{KFR closed}).  Fabbri has shown that these domains are characterized by a  version of  property (1) in the proposition:  $D$ is vacant if and only if $\X = \X_{D_1} \cup \cdots \cup \X_{D_n}$ whenever $D = D_1 \cap D_2 \cap \cdots \cap D_n$ \cite[Theorem 3.1]{Fab}.   It is an open question as to whether statement (1) is in general equivalent to the property of being vacant \cite[p.~1075]{Fab}.}
\end{remark}

The description of the inverse closure of non-affine subsets in ${\ff X}$ is less transparent, but  the Kronecker function ring construction discussed in the last section is useful here too in clarifying things.    
Here, as in Section 3, $\X_R^*$ is the set of valuation rings in $\X^*$ containing $R$.  

 \begin{proposition} \label{top prelim lemma} \label{top prelim} Let $Z$ be a subspace of $\X$, and let $A = \bigcap_{V \in Z}V$. Then the following statements hold for $Z$ and $A$.
 \begin{itemize}
 \item[{\em (1)}] ${{Z}} = \inv({{Z}})$  if and only if ${{Z}}^* = \X^*_{\Kr({{Z}})}$. 
 \item[{\em (2)}]  $\inv({{Z}}) = \{V \cap F:V \in \X^*_{\Kr(Z)}\}$ 
 
 \item[{\em (3)}]  $(\inv({{Z}}))^* = \X^*_{\Kr({{Z}})}$.
  \item[{\em (4)}] $A = \bigcap_{V \in {\rm{inv}}(Z)}V$. 
 \item[{\em (5)}] When ${{Z}}$ is affine, then $\inv({{Z}}) = \X_{A}$. 
\item[{\em (6)}]  $(\inv({{Z}}))^* = \inv({{Z}}^*)$.  
 \end{itemize}
 \end{proposition}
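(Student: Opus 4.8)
The plan is to prove the six statements in an order that exploits the homeomorphism $\kappa\colon\X^*\to\X$ of Proposition~\ref{KFR} together with the Prüfer structure of the Kronecker function ring, so that the non‑affine situation in $\X$ is reduced to the Prüfer setting of Lemma~\ref{qc open}. Two identities drive everything. First, since $V = V^*\cap F$ for each $V\in\X$, the operation $*$ is exactly the inverse of $\kappa$; in particular $*$ is a bijection and $\kappa\bigl((\inv(Z))^*\bigr)=\inv(Z)$. Second, $\Kr(Z)\cap F=\bigcap_{V\in Z}(V^*\cap F)=\bigcap_{V\in Z}V=A$. Statement (6) is then immediate: by Proposition~\ref{KFR} the map $\kappa$ is a homeomorphism in the inverse topology, so $\kappa(\inv(Z^*))=\inv(\kappa(Z^*))=\inv(Z)$, and applying $\kappa^{-1}=*$ gives $(\inv(Z))^*=\inv(Z^*)$.

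The crux is statement (3). Here I would observe that $\X^*$ is itself the Zariski--Riemann space of the Prüfer domain $\Kr(\X)$: since $\Kr(\X)$ is a Prüfer domain with quotient field $F(T)$ and $(\X^*)_{\Kr(\X)}=\X^*$, the elements of $\X^*$ are precisely the valuation overrings of $\Kr(\X)$. Thus Lemma~\ref{qc open}(3), applied with base ring $\Kr(\X)$, says that the inverse closed subsets of $\X^*$ are exactly the sets $\X^*_S$ with $S$ an overring of $\Kr(\X)$. Now $\Kr(Z)=\bigcap_{W\in Z^*}W$ is such an overring, so $\X^*_{\Kr(Z)}$ is an inverse closed set containing $Z^*$; and any inverse closed $\X^*_S\supseteq Z^*$ forces $S\subseteq\bigcap_{W\in Z^*}W=\Kr(Z)$, hence $\X^*_S\supseteq\X^*_{\Kr(Z)}$. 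Therefore $\inv(Z^*)=\X^*_{\Kr(Z)}$, and combining with (6) gives (3). Statements (1) and (2) then follow formally: (1) because $*$ is a bijection, so $Z=\inv(Z)$ iff $Z^*=(\inv(Z))^*=\X^*_{\Kr(Z)}$; and (2) by applying $\kappa$ to (3), using $\kappa\bigl((\inv(Z))^*\bigr)=\inv(Z)$ and $\kappa(W)=W\cap F$.

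For (4) I would use the second identity: by (2) every $V\in\inv(Z)$ has the form $W\cap F$ with $W\supseteq\Kr(Z)$, so $V\supseteq\Kr(Z)\cap F=A$; together with $Z\subseteq\inv(Z)$ this yields $A=\bigcap_{V\in\inv(Z)}V$. Finally, (5) is the affine payoff. Assuming $A$ is a Prüfer domain with quotient field $F$, statement (4) shows $\inv(Z)\subseteq\X_A$, while $\X_A=\bigcap_{a\in A}\X_{\{a\}}$ is an intersection of basic quasicompact open sets and hence is inverse closed in $\X$. Thus $\inv(Z)$ is an inverse closed subspace of $\X_A$, which is the Zariski--Riemann space of the Prüfer domain $A$; by Lemma~\ref{qc open}(3), applied now with base ring $A$, we get $\inv(Z)=\X_S$ for some overring $S\supseteq A$. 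Since $S$ is again Prüfer with quotient field $F$, one has $\bigcap_{V\in\X_S}V=S$, so (4) forces $S=A$ and $\inv(Z)=\X_A$.

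I expect the main obstacle to be the two applications of Lemma~\ref{qc open}(3) in these derived settings. One must verify that $\X^*$ genuinely is the full space of valuation overrings of $\Kr(\X)$ (so the lemma applies verbatim with base $\Kr(\X)$), and that an inverse closed subset of $\X$ that lies in $\X_A$ is inverse closed in the intrinsic topology of $\X_A$ (so the lemma applies with base $A$). The latter holds because the inclusion $\X_A\hookrightarrow\X$ is spectral: a presentation $\inv(Z)=\bigcap_\alpha U_\alpha$ by quasicompact open subsets of $\X$ restricts to $\inv(Z)=\bigcap_\alpha(U_\alpha\cap\X_A)$, a presentation by quasicompact open subsets of $\X_A$. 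Once this bookkeeping is in place, the remaining steps are the routine manipulations of intersections indicated above.
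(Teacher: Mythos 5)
Your proposal is correct and follows essentially the same route as the paper: both reduce to the Pr\"ufer setting of Lemma~\ref{qc open}(3) via the Kronecker function ring and the homeomorphism $\kappa$, with the minimality of $\X^*_{\Kr(Z)}$ among inverse closed sets containing $Z^*$ doing the main work. The only difference is organizational --- you establish (6) and (3) first and derive (1) and (2) from them, whereas the paper starts from (1) and works forward; the underlying ingredients are identical.
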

 
 \begin{proof}
(1) Suppose ${{Z}}$ is inverse closed in ${\ff X}$.  Then by Proposition~\ref{KFR inverse}, ${{Z}}^*$ is inverse closed in $\X^*$. Moreover, ${{Z}}^* \subseteq \X^*_{\Kr({{Z}})}$, so since $\Kr({{Z}})$ is a Pr\"ufer domain with quotient field $F^*$, we have by Lemma~\ref{qc open}(3) that  ${{Z}}^* = \X^*_R$ for some overring $R$ of $\Kr({{Z}})$. As an overring of a Pr\"ufer domain, $R$ is itself a Pr\"ufer domain, and in particular, it is the intersection of its valuation rings, so we conclude that $R  
 = \Kr({{Z}})$.   Therefore, ${{Z}}^* = \X^*_{\Kr({{Z}})}$.  
 Conversely, if ${{Z}}^* = \X^*_{\Kr({{Z}})}$, then ${{Z}}^*$ is by Lemma~\ref{qc open}(3) inverse closed in $\X^*$.  Thus by Proposition~\ref{KFR inverse}, ${{Z}}$ is inverse closed in  ${\ff X}$.  

(2) By Lemma~\ref{qc open}(3), $\X^*_{\Kr({{Z}})}$ is an inverse closed subset of $\X^*$, so by Proposition~\ref{KFR inverse}, $\{V \cap F:V \in \X^*_{\Kr(Z)}\}$ is an inverse closed subset of ${\ff X}.$ 
Thus to prove that $\inv({{Z}}) = \{V \cap F:V \in \X^*_{\Kr(Z)}\}$, it suffices to show that every inverse  closed subset of ${\ff X}$ containing ${{Z}}$ contains also $\{V \cap F:V \in \X^*_{\Kr(Z)}\}$.    
  If ${{Z}}'$ is another inverse closed subset of ${\ff X}$ containing ${{Z}}$, then by (1), $({{Z}}')^* = \X^*_{\Kr({{Z}}')}$.  Thus     $
  \{V \cap F:V \in \X^*_{\Kr(Z)}\} \subseteq \{V \cap F: V \in ({{Z}}')^*\} = {{Z}}'$.  Also, since ${{Z}} \subseteq {{Z}}'$, we have $\X^*_{\Kr({{Z}})} \subseteq \X^*_{\Kr({{Z}}')}$, so that $\{V \cap F:V \in \X^*_{\Kr(Z)}\} \subseteq {{Z}}'$.  
      This verifies (2). 
 
 (3) This follows from (2) and Proposition~\ref{KFR inverse}.  
 
 (4) It is enough by  (2) to observe that $\X^*_{\Kr({{Z}})}= \X^*_{\Kr(\inv({{Z}}))}$, which is the case by (3).  
 
(5) Since ${{Z}}$ is affine, $A$ is a Pr\"ufer domain with quotient field $F$.  Thus by Lemma~\ref{qc open}(3), $\inv({{Z}}) = \X_R$ for some ring $R$ between $D$ and $F$ that is integrally closed in $F$.  By (4), $A = \bigcap_{V \in {\rm{inv}}(Z)}V = \bigcap_{V \in \X_R}V= R$, so $\inv({{Z}}) = \X_{A}$.       

(6)  By (3) and (5), $\inv({{Z}}^*) = \X^*_{\Kr({{Z}})}= (\inv({{Z}}))^*$.
 \end{proof}

A consequence of the proposition is that  localizations of $A=\bigcap_{V \in Z}V$ can be represented with subsets of $\inv({{Z}})$.  

\begin{corollary} \label{localization} If ${{Z}}\subseteq {\ff X}$ and $S$ is a multiplicatively closed subset of $A=\bigcap_{V \in Z}V$, then  there exists $Y \subseteq \inv({{Z}})$ such that  $A_S = \bigcap_{V \in Y}V$. 
\end{corollary}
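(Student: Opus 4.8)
The plan is to lift the localization to the Kronecker function ring, where everything is Pr\"ufer, and then descend by intersecting with $F$. Throughout write $F^* = F(T)$ as in Section 4.

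First I would record the identity $\Kr(Z)_S \cap F = A_S$. Since $V^* \cap F = V$ for each $V$, we have $\Kr(Z) \cap F = \bigcap_{V \in Z}(V^* \cap F) = A$; in particular $S \subseteq A \subseteq \Kr(Z)$, so $A_S \subseteq \Kr(Z)_S$ and also $A_S \subseteq F$. For the reverse containment, if $x \in \Kr(Z)_S \cap F$, write $x = a/s$ with $a \in \Kr(Z)$ and $s \in S$; then $a = sx$ lies in $F$ (as $s,x \in F$) and in $\Kr(Z)$, so $a \in \Kr(Z) \cap F = A$, whence $x \in A_S$. Thus $A_S = \Kr(Z)_S \cap F$.

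Next set $R = \Kr(Z)_S$. As an overring of the Pr\"ufer domain $\Kr(Z)$, $R$ is again a Pr\"ufer domain with quotient field $F^*$, so $R$ is the intersection of its valuation overrings in $F^*$. The main point is that every such valuation overring lies in $\X^*$: it contains $\Kr(\X) \subseteq \Kr(Z) \subseteq R$, and by the identification $\X^* \cong \Spec(\Kr(\X))$ of Section 4 (with $\Kr(\X)$ Pr\"ufer), the valuation overrings of $\Kr(\X)$ are exactly the Gaussian extensions forming $\X^*$. Hence the valuation overrings of $R$ are precisely the members of $\X^*_R$, and $R = \bigcap_{W \in \X^*_R} W$. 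I would then take $Y = \{W \cap F : W \in \X^*_R\}$. Since $\Kr(Z) \subseteq R$ gives $\X^*_R \subseteq \X^*_{\Kr(Z)}$, Proposition~\ref{top prelim}(2) yields $Y \subseteq \{W \cap F : W \in \X^*_{\Kr(Z)}\} = \inv(Z)$, and commuting $\bigcap$ with $(-)\cap F$ gives
\[
\bigcap_{V \in Y} V = \Big(\bigcap_{W \in \X^*_R} W\Big)\cap F = R \cap F = A_S,
\]
as desired.

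The step I expect to be the main obstacle is the structural one in the third paragraph: $A_S$ itself need not be a Pr\"ufer domain, so Proposition~\ref{top prelim}(5) does not apply directly. The device is to pass to $R = \Kr(Z)_S$, which \emph{is} Pr\"ufer over $F^*$, and to know that all of its valuation overrings are Gaussian extensions lying in $\X^*$, so that the descent $W \mapsto W \cap F$ lands inside $\inv(Z)$ and reconstructs $A_S$ exactly. Everything else is the routine clearing-of-denominators computation and the fact that intersection commutes with restriction to $F$.
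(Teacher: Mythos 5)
Your proof is correct and follows essentially the same route as the paper's: both localize the Kronecker function ring, use $\X^*_{\Kr(Z)_S} \subseteq \X^*_{\Kr(Z)}$ together with Proposition~\ref{top prelim} to descend to a subset $Y$ of $\inv(Z)$, and recover $A_S$ as $\Kr(Z)_S \cap F$. The only difference is that you spell out the clearing-of-denominators step and the identification of the valuation overrings of $\Kr(Z)_S$ with members of $\X^*$, which the paper leaves implicit.
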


\begin{proof}
Since $A = \Kr({{Z}}) \cap F$, we have $A_S = \Kr({{Z}})_S \cap F$.  Moreover, $\X^*_{\Kr({{Z}})_S} \subseteq \X^*_{\Kr({{Z}})}$, so by Proposition~\ref{top prelim}(3), there is a subset $Y$ of $\inv({{Z}})$ such that $Y^* = \X^*_{\Kr({{Z}})_S}$.  Thus $\bigcap_{V \in Y}V = \Kr(Y) \cap F = \Kr({{Z}})_S \cap F = A_S$.  
\end{proof}

The proposition also yields an interpretation of $\pt(Z)$.

  \begin{corollary} \label{pt} Let ${{Z}}$ be a subspace of ${\ff X}$.  The continuous map, $$g:\Spec(\Kr({{Z}})) \rightarrow {\ff X}:P \mapsto \Kr({{Z}})_P \cap F$$  restricts to a homeomorphism of  $\Max(\Kr({{Z}}))$ onto $\pt({{Z}})$.
  \end{corollary}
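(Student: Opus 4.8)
The plan is to exhibit $g$ as a composite of homeomorphisms and then observe that a homeomorphism carries closed points to closed points. First I would factor $g$ through the Kronecker function ring machinery of Section~4: writing $\lambda\colon\Spec(\Kr(Z))\to\X^*_{\Kr(Z)}$ for the localization map $P\mapsto\Kr(Z)_P$ (the analogue for $\Kr(Z)$ of the map in Proposition~\ref{KFR}(a)) and $\kappa$ for the homeomorphism of Proposition~\ref{KFR}(b), one has $g(P)=\Kr(Z)_P\cap F=\kappa(\lambda(P))$, so that $g=\kappa\circ\lambda$. Because $\Kr(Z)$ is a Pr\"ufer domain with quotient field $F^*$ (as recorded in Section~4), every valuation ring in $\X^*_{\Kr(Z)}$ is a localization of $\Kr(Z)$ at a prime, whence $\lambda$ is a homeomorphism onto $\X^*_{\Kr(Z)}$. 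By Proposition~\ref{top prelim}(2), the map $\kappa$ carries $\X^*_{\Kr(Z)}$ onto $\inv(Z)$, and since $\kappa$ is a homeomorphism on underlying spaces its restriction to $\X^*_{\Kr(Z)}$ is a homeomorphism onto $\inv(Z)$. Composing, $g$ is a homeomorphism of $\Spec(\Kr(Z))$ onto the subspace $\inv(Z)$ of $\X$.

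Next I would identify the two sets of closed points. In the Zariski topology on $\Spec(\Kr(Z))$ a singleton $\{P\}$ is closed precisely when $P$ is maximal, so the closed points are exactly $\Max(\Kr(Z))$. On the other side, by the definition of the operator $\pt$, the set $\pt(Z)$ consists of exactly those points of $\inv(Z)$ that are closed in the subspace topology. Since $g$ is a homeomorphism onto $\inv(Z)$, it matches closed points with closed points; hence $g(\Max(\Kr(Z)))=\pt(Z)$, and $g$ restricts to a homeomorphism $\Max(\Kr(Z))\to\pt(Z)$, as desired.

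The substantive input here is Proposition~\ref{top prelim}, which already packages the interaction between $\inv(Z)$ and $\X^*_{\Kr(Z)}$; once that is in hand the argument is essentially formal. The one point deserving care is that the homeomorphism be onto all of $\inv(Z)$, not merely onto $\{V\cap F:V\in Z^*\}$: surjectivity onto $\inv(Z)$ rests on the Pr\"uferness of $\Kr(Z)$ (so that $\lambda$ hits every valuation overring) together with Proposition~\ref{top prelim}(2). I would therefore expect the only genuine obstacle to be confirming that the topology on $\inv(Z)$ used in the definition of $\pt(Z)$ agrees with the subspace topology transported by $g$, which is immediate once $g=\kappa\circ\lambda$ is recognized as a composite of homeomorphisms.
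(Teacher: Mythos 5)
Your proof is correct and follows essentially the same route as the paper: both arguments reduce the statement to the fact that $g$ is a homeomorphism of $\Spec(\Kr(Z))$ onto $\inv(Z)$ (via Proposition~\ref{top prelim}(2)--(3), the Pr\"uferness of $\Kr(Z)$, and the homeomorphism $\kappa$), and then match closed points on both sides. Your packaging of the two inclusions into a single homeomorphism argument is if anything slightly cleaner than the paper's, which handles the forward direction separately by invoking closedness of $g$.
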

  
  \begin{proof}  Let $M \in \Max(\Kr({{Z}}))$.  Then by Proposition~\ref{top prelim}(2), $g(M) = \Kr({{Z}})_M \cap F \in \inv({{Z}})$.  Moreover, the mapping $g$ is by Proposition~\ref{KFR} a  closed map, so since $M$ is a closed point in $\Max(\Kr({{Z}}))$, $g(M) \in \pt({{Z}})$.  Thus $g$ carries $\Max(\Kr({{Z}})))$ into $\pt({{Z}})$.  Now let  $V \in \pt({{Z}})$.  Then  Proposition~\ref{top prelim}(3) implies that $V^* \in \X^*_{\Kr({{Z}})}$, and hence there is a prime ideal $P$ of $\Kr({{Z}})$ such that $V^* = \Kr({{Z}})_P$.  By Proposition~\ref{KFR}, the mapping $g$ induces a homeomorphism of $\Spec(\Kr({{Z}}))$ onto $\{V \cap F:V \in \X^*_{\Kr({{Z}})}\}$. Since $V$ is a closed point in $\inv({{Z}})=\{V \cap F:V \in \X^*_{\Kr({{Z}})}\}$, then $V^*$ is a closed point in $\X^*_{\Kr({{Z}})}$, so that $P$ is a closed point in $\Spec(\Kr({{Z}}))$.  As such, $P \in \Max(\Kr({{Z}}))$, so that $g$ carries $\Max(\Kr({{Z}}))$ onto $\pt({{Z}})$. By Proposition~\ref{KFR}, $g$ is a closed continuous map, and the  proposition  follows
  \end{proof}

  We mention several  examples of inverse closed subsets.  Some explicit interpretations of $\inv(Z)$ are also given in \cite{OFuture} in the case where $D$ is a two-dimensional Noetherian domain with quotient field $F$.  

\begin{example} \label{formally real example} 
{\em \begin{itemize}
\item[]

\item[(1)]  
  Let $f(T)$ be a nonconstant monic polynomial in $D[T]$, and let ${{Z}}$ be the set of valuation rings $V$ in ${\ff X}$ such that $f$ has no root in the residue field of $V$.  If ${{Z}}$ is nonempty, then as noted in Example~\ref{affine examples}(2), ${{Z}}$ is affine.  Moreover, with $A = \bigcap_{V \in Z}V$, then  ${{Z}} = \X_{A}$.  For suppose $V \in \X_{A}$ and $f(x) \in {\ff M}_V$ for some $x \in V$. If $f(x) = 0$, then $x$ is in the integral closure of $D$ in $F$, and hence an element of each $V$ in $Z$, a contradiction to the fact that $f$ has no root in the residue field of $V$. Thus $f(x) \ne 0$ and since $f(x) \in {\ff M}_V$,  
   then $f(x)^{-1} \not \in V$, so that $f(x)^{-1} \not \in A$.  But then $f(x)^{-1} \not \in W$ for some $W \in {{Z}}$, so that since $W$ is a valuation ring, $f(x) \in {\ff M}_W$, a contradiction.  
Thus by   Proposition~\ref{top prelim}(5), ${{Z}} = \inv({{Z}})$. 


\item[(2)]  Suppose that $D$ is a Krull domain with quotient field $F$, and let ${{Z}}$ be the set of essential prime divisors of $D$, i.e., ${{Z}}$ is the set of all localizations of $D$ at height $1$ prime ideals of $D$.    Then $\Kr({{Z}})$ is a Dedekind domain with quotient field $F^*$, and hence $\X^*_{\Kr({{Z}})} ={{Z}}^*\cup\{F^*\}$.
Thus by Proposition~\ref{top prelim}(3), $\inv({{Z}}) = {{Z}} \cup \{F\}$. Moreover, if $D$ has Krull dimension $>1$, then ${{Z}}$ is not affine.      This example is a special case of the more general fact  that the inverse closure of a Noetherian subspace ${{Z}}$ of ${\ff X}$ is the closure of ${{Z}}$  under generalizations (Corollary~\ref{very new cor}).  
\end{itemize}}
\end{example}

It follows from  Proposition~\ref{top prelim}(5) that two {affine} subspaces ${{Z}}_1$ and ${{Z}}_2$ in ${\ff X}$ have the same inverse closure if and only if $\bigcap_{V \in Z_1}V = \bigcap_{V \in Z_2}V$. 
For non-affine subspaces we can assert via Proposition~\ref{top prelim}(4) only that when ${{Z}}_1$ and ${{Z}}_2$ have the same inverse closure, then $\bigcap_{V \in Z_1}V = \bigcap_{V \in Z_2}V$.   
  The reason that the converse of this is generally not true is that an integrally closed domain can have more than one Kronecker function ring associated to it. To clarify this, we say that for a ring $R$ between $D$ and $F$ that is integrally closed in $F$, an overring $S$ of $\Kr(\X)$  is a {\it Kronecker $F$-function ring} of $R$ if $R = S \cap F$. (This is a special case of the class of  $F$-function rings introduced by Halter-Koch in \cite{HalKoc}).   
 Thus $\Kr(\X_R)$ is the smallest Kronecker $F$-function ring of $R$, and when $R$ is a Pr\"ufer domain with quotient field $F$,  it is the unique Kronecker $F$-function ring of $R$ \cite[Theorem 26.18]{G}.  
 In general the property of having a unique Kronecker $F$-function ring  does not characterize Pr\"ufer domains   \cite{Fab}.  In any case, the next proposition shows that   the complexity (or lack thereof) of the class of   Kronecker $F$-function rings of a given domain $R$  explains the complexity of the class of inverse closed subspaces of ${\ff X}$ that represent $R$. 

\begin{proposition} \label{KFR closed} Let $R$ be a ring between $D$ and $F$ that is integrally closed in $F$.  Then there is a one-to-one correspondence between the Kronecker $F$-function rings $S$ of $R$  and inverse closed subspaces ${{Z}}$ of ${\ff X}$ such that $R = \bigcap_{V \in Z}V$. The correspondence is  given by $$S \mapsto \{V \cap F: V \in \X^*_S\} \:\:\:{\mbox{ and }} \:\:\: {{Z}} \mapsto \Kr({{Z}}).$$  
\end{proposition}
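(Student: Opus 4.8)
The plan is to check that the two displayed assignments are well-defined maps between the indicated sets and then that they are mutually inverse; the underlying tool throughout is the bijection $\X \leftrightarrow \X^*$, $V \mapsto V^*$ with inverse $W \mapsto W \cap F$, which by Proposition~\ref{KFR inverse} is a homeomorphism in the inverse topology and satisfies $V = V^* \cap F$. First I would show that if $Z$ is inverse closed with $R = \bigcap_{V \in Z}V$, then $\Kr(Z)$ is a Kronecker $F$-function ring of $R$: it is an overring of $\Kr(\X)$ because $Z \subseteq \X$, and interchanging intersections gives $\Kr(Z) \cap F = \bigcap_{V \in Z}(V^* \cap F) = \bigcap_{V \in Z}V = R$. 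Conversely, given a Kronecker $F$-function ring $S$ of $R$, I would show $Z_S := \{V \cap F : V \in \X^*_S\}$ is inverse closed with $\bigcap_{V \in Z_S}V = R$. Since $S$ is an overring of the Pr\"ufer domain $\Kr(\X)$, Lemma~\ref{qc open}(3) --- applied with $\Kr(\X)$ and $F^*$ in the roles of $D$ and $F$, using $(\X^*)_{\Kr(\X)} = \X^*$ --- shows $\X^*_S$ is inverse closed in $\X^*$, whence $Z_S$ is inverse closed in $\X$ by transport along the inverse-topological homeomorphism $\kappa$; and $\bigcap_{V \in Z_S}V = (\bigcap_{W \in \X^*_S}W) \cap F = S \cap F = R$.

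To see the maps invert one another, start from $S$: since $(W \cap F)^* = W$ for $W \in \X^*$, we have $Z_S^* = \X^*_S$, so $\Kr(Z_S) = \bigcap_{W \in \X^*_S}W = S$. Starting instead from an inverse closed $Z$, Proposition~\ref{top prelim}(2) identifies $\{V \cap F : V \in \X^*_{\Kr(Z)}\}$ with $\inv(Z)$, which equals $Z$ by hypothesis; that is, $Z_{\Kr(Z)} = Z$. Hence the two assignments are inverse bijections, which is the assertion of the proposition.

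The single nonformal point, and the step I expect to carry the weight of the argument, is the identity $\bigcap_{W \in \X^*_S}W = S$ for an overring $S$ of $\Kr(\X)$; it is used both to compute $\bigcap_{V \in Z_S}V = R$ and to recover $\Kr(Z_S) = S$. Establishing it requires combining two facts: an overring of a Pr\"ufer domain is again Pr\"ufer and so is the intersection of its valuation overrings, and every valuation overring of $\Kr(\X)$ actually lies in $\X^*$, which is the content of $(\X^*)_{\Kr(\X)} = \X^*$ from \cite[Theorem 2.3]{HK}. Once this identity is secured, the remainder is routine manipulation of commuting intersections together with the inverse-topological homeomorphism $\kappa$.
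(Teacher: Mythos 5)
Your proof is correct and follows essentially the same route as the paper's: both rest on Proposition~\ref{top prelim}(2) to recover an inverse closed $Z$ from $\Kr(Z)$, and on the identity $S=\bigcap_{W\in\X^*_S}W$ for an overring $S$ of the Pr\"ufer domain $\Kr(\X)$ to recover $S$ from $\{V\cap F:V\in\X^*_S\}$. You are somewhat more explicit than the paper in verifying that $S\mapsto\{V\cap F:V\in\X^*_S\}$ actually lands in the set of inverse closed subspaces representing $R$ (via Lemma~\ref{qc open}(3) and the homeomorphism $\kappa$), a point the paper leaves implicit, but this is a matter of presentation rather than a different argument.
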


\begin{proof} Let ${{Z}}$ be a closed subspace of ${\ff X}$ such that $R = \bigcap_{V \in Z}V$.  Then 
by Proposition~\ref{top prelim}(2), $Z = \{V \cap F:V \in \X^*_{\Kr({{Z}})}\}$, so $\Kr({{Z}})$ is a Kronecker $F$-function ring of $R$.  Also, if $S$ and $T$ are Kronecker $F$-function rings of $R$ with $\{V \cap F:V \in \X^*_S\} = \{V \cap F:V \in \X^*_T\}$, then   $S = \bigcap_{V \in \X_S}V = \Kr(\{V \cap F:V \in \X^*_S\}) = \Kr(\{V \cap F:V \in \X^*_T\}) = \bigcap_{V \in \X_T}V = T$. 
This proves  the proposition.
\end{proof}

By way of example, every Krull domain of dimension at least $2$ has more than one Kronecker function ring; see \cite[Proposition 2.10]{Fab}. 

\section{Affine schemes in $\X$}

In this section we characterize the subspaces of $\X$ that are affine schemes.  As discussed at the beginning of Section~\ref{inverse section}, whether a subspace of $\X$ when equipped with the natural choice of a structure presheaf is a locally ringed subspace can be detected topologically, in the sense that this presheaf is a sheaf if and only if the  subspace is irreducible. 
In contrast, whether a subspace is an affine scheme cannot be detected topologically since $\X$ is homeomorphic to the affine scheme $\X^*$, regardless of the choice of $D$ and $F$.  Our first characterization of affine schemes in $\X$ is the following theorem.  
In the theorem, $\OO_Z$ is the presheaf defined at the beginning of Section~\ref{inverse section} as $\OO_Z(U) = \bigcap_{V \in U}V$ for all nonempty open subsets $U$ of $Z$.

      \begin{theorem} \label{affine scheme}
     Let $Z$ be a subspace   of $\X$. 
    Then $(Z,\OO_Z)$  is an affine scheme if and only if $Z$ is inverse closed in $\X$ and $\OO_Z(Z)$ is a Pr\"ufer domain with quotient field $F$.
      \end{theorem}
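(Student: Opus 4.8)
The plan is to set $A = \OO_Z(Z) = \bigcap_{V \in Z}V$ throughout and to route both implications through the identity $\inv(Z) = \X_A$ supplied by Proposition~\ref{top prelim}(5) for affine sets. For the ``if'' direction, suppose $Z$ is inverse closed and $A$ is a Pr\"ufer domain with quotient field $F$. Then $Z$ is an affine set, so Proposition~\ref{top prelim}(5) gives $\inv(Z) = \X_A$, and inverse closedness yields $Z = \X_A$. Since $A$ contains $D$ and is a Pr\"ufer domain with quotient field $F$, the localization map $\X_A \to \Spec(A)$ is an isomorphism of locally ringed spaces, as recalled in Section~4; because the structure sheaf $\OO_Z$ assigns $\bigcap_{V \in U}V$ to each open $U$, this isomorphism identifies $(Z,\OO_Z)$ with $\Spec(A)$, so $(Z,\OO_Z)$ is an affine scheme.

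The content lies in the converse. Assume $(Z,\OO_Z)$ is an affine scheme. As an intersection of subrings of $F$, the ring $A$ is a domain, and necessarily $(Z,\OO_Z) \cong \Spec(A)$ with $A$ its ring of global sections; fix such an isomorphism $f\colon Z \to \Spec(A)$ of locally ringed spaces. First I would show $A$ is Pr\"ufer. The stalk of $\OO_Z$ at $V$ is $V$, while the stalk of $\OO_{\Spec(A)}$ at a prime $P$ is $A_P$, so the induced stalk isomorphisms give $A_{f(V)} \cong V$ for every $V \in Z$. Since $f$ is a bijection onto $\Spec(A)$ and being a valuation ring is preserved under ring isomorphism, each localization $A_P$ is a valuation ring, and hence $A$ is Pr\"ufer by definition. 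To see that the quotient field of $A$ is $F$, I would inspect the generic point: the minimal prime $(0)$ of the domain $A$ corresponds under $f$ to a point $\eta \in Z$ whose stalk $\eta$ is isomorphic to $A_{(0)}$, the quotient field of $A$, and is therefore a field; as the trivial valuation ring $F$ is the only member of $\X$ that is a field, $\eta = F$, and comparing the stalk isomorphism at $\eta$ with its compatibility with global sections (the canonical map $A \to \eta$ being the inclusion $A \subseteq F$) forces the quotient field of $A$ to equal $F$.

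It remains to show $Z$ is inverse closed, which I expect to be the main obstacle. Now that $A$ is a Pr\"ufer domain with quotient field $F$, the set $Z$ is affine, so Proposition~\ref{top prelim}(5) gives $\inv(Z) = \X_A$; this is an affine scheme by the ``if'' direction, and its ring of global sections is again $A$ by Proposition~\ref{top prelim}(4). Since trivially $Z \subseteq \X_A = \inv(Z)$, I must promote this inclusion to an equality, and a cardinality count will not do, since many distinct subspaces of $\X$ can intersect to the same $A$. The resolution is schematic: the inclusion $\iota\colon Z \hookrightarrow \inv(Z)$ is a morphism of locally ringed spaces between affine schemes, and on global sections it induces the identity on $A$, because both section rings equal $\bigcap_{V \in Z}V$ with restriction given by inclusion. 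By the anti-equivalence between affine schemes and rings, a morphism inducing an isomorphism on global sections is an isomorphism; as $\iota$ is then a bijective set inclusion, its surjectivity forces $Z = \inv(Z)$. Hence $Z$ is inverse closed, completing the proof.
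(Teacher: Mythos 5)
Your proof is correct, and its skeleton is the same as the paper's: both directions run through the identification of $Z$ with $\X_{A}$, where $A=\bigcap_{V\in Z}V$. The differences lie in how you execute two sub-steps of the converse. The paper identifies each stalk of $\Spec(A)$ with the corresponding valuation ring $V\in Z$ as an actual subring of $F$ (the stalk isomorphism is an $A$-algebra map into $F$, hence forces $V=A_{f(V)}$), so the quotient-field claim and the equality $Z=\{A_P:P\in\Spec(A)\}=\X_A$ both fall out at once, with inverse closedness then immediate because sets of the form $\X_R$ are always inverse closed. You instead work only with abstract stalk isomorphisms, which costs you two extra arguments --- the generic-point computation pinning down the quotient field via the unique field in $\X$, and the appeal to the anti-equivalence between affine schemes and rings to upgrade the inclusion $Z\hookrightarrow\inv(Z)=\X_A$ (which is the identity on global sections) to an isomorphism and hence a bijection --- but it buys a more careful justification of the point the paper leaves tacit, namely why the isomorphism $V\cong A_{f(V)}$ respects the embeddings into $F$. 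In the ``if'' direction you quote the Section~4 fact that $\X_R\to\Spec(R)$ is an isomorphism of locally ringed spaces for $R$ Pr\"ufer with quotient field $F$, where the paper re-derives it (using Proposition~\ref{inverse lrs} to see that $\OO_Z$ is a sheaf); since that fact is stated before the theorem, this is legitimate. Both routes are sound.
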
     
      
      \begin{proof}
Let $A = \OO_Z(Z) = \bigcap_{V \in Z}V$, and suppose that $(Z,\OO_Z)$ is an affine scheme.    Then by assumption, $A_P \in Z$ for each prime ideal $P$ of $A$.  In particular, each localization of $A$ at a prime ideal is a valuation domain with quotient field $F$, and hence $A$ is a Pr\"ufer domain with quotient field $F$.  Moreover, if $V \in Z$, then since $Z$ is an affine scheme,  $V=A_P$ for some prime ideal $P$ of $A$,  so that $Z = \X_{A}$.  By Proposition~\ref{top prelim}(1), 
 $\X_{A}$ is inverse closed.  

Conversely, suppose that $A$ is a Pr\"ufer domain with quotient field $F$ and $Z$ is inverse closed. By Proposition~\ref{inverse lrs}, $Z$ is irreducible, and hence $\OO_Z$ is a sheaf.   
Since $A$ is a Pr\"ufer domain, then  each $V \in Z$ is a localization of $A$ at a prime ideal. Also, by Proposition~\ref{top prelim}(4), $Z = \inv(Z) = \X_{A}$, so it follows that each localization of $A$ at a prime ideal is in $Z$.   Thus $Z$ can be identified with $\Spec(A)$, and $(Z,\OO_Z)$ is an affine scheme.  
      \end{proof}

\begin{corollary} \label{ZR Prufer cor}The Zariski-Riemann space $\X$ is an affine scheme if and only if the integral closure of $D$ in $F$ is a Pr\"ufer domain with quotient field $F$. \qed
\end{corollary}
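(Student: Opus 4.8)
The plan is to obtain this corollary as the special case $Z = \X$ of Theorem~\ref{affine scheme}. First I would observe that $\X$ is trivially inverse closed in itself, so that the first hypothesis of the theorem---inverse closedness of $Z$---holds automatically. This reduces the corollary to analyzing the second hypothesis, namely the condition that $\OO_{\X}(\X)$ be a Pr\"ufer domain with quotient field $F$.

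The key step will be to identify the ring of global sections $\OO_{\X}(\X) = \bigcap_{V \in \X} V$ with the integral closure of $D$ in $F$. This is the classical fact that the integral closure of a subring $D$ of a field $F$ coincides with the intersection of all valuation rings of $F$ containing $D$ (cf.~\cite[Theorem 19.8]{G}), a fact already invoked in the proof of Proposition~\ref{Hilbert}. It is precisely because $\X$ is defined to consist of the valuation rings between $D$ and $F$ \emph{having quotient field $F$} that this intersection runs over exactly the right family of valuation rings.

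Granting this identification, the corollary is immediate: Theorem~\ref{affine scheme} asserts that $(\X,\OO_{\X})$ is an affine scheme if and only if $\X$ is inverse closed and $\OO_{\X}(\X)$ is a Pr\"ufer domain with quotient field $F$. Since the first condition always holds and $\OO_{\X}(\X)$ is the integral closure of $D$ in $F$, this is equivalent to requiring that the integral closure of $D$ in $F$ be a Pr\"ufer domain with quotient field $F$, which is exactly the asserted equivalence.

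I do not anticipate any genuine obstacle, as the statement is a direct specialization of a theorem already in hand. The only point requiring (routine) care is the global-sections computation, but this is a standard result and is used earlier in the paper, so it can simply be cited.
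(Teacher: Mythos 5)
Your proposal is correct and matches the paper's (implicit) argument exactly: the corollary is stated with a \qed precisely because it is the specialization $Z=\X$ of Theorem~\ref{affine scheme}, with $\X$ trivially inverse closed in itself and $\OO_{\X}(\X)=\bigcap_{V\in\X}V$ equal to the integral closure of $D$ in $F$ by the classical valuation-theoretic characterization of integral closure (already invoked in the proof of Proposition~\ref{Hilbert}). No gaps.
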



More generally, the inverse closed subsets of $\X$ correspond to affine schemes in $\X^*$:

\begin{corollary} A nonempty subset $Z$ of $\X$ is inverse closed if and only if $(Z^*,\OO_{Z^*})$  is an affine scheme.
\end{corollary}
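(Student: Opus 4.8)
The plan is to reduce the corollary to Theorem~\ref{affine scheme} applied not to $\X$ but to $\X^*$, exploiting that $\X^*$ is itself a Zariski-Riemann space of Pr\"ufer type. First I would recall, from the discussion preceding Proposition~\ref{KFR}, that $\Kr(\X)$ is a Pr\"ufer domain with quotient field $F^*$ and that $(\X^*)_{\Kr(\X)} = \X^*$; that is, $\X^*$ is precisely the set of all valuation rings of $F^*$ containing $\Kr(\X)$, so $\X^*$ is the Zariski-Riemann space of $F^*/\Kr(\X)$. Since this is exactly the setting of Section~\ref{inverse section} with $F$ and $D$ replaced by $F^*$ and $\Kr(\X)$, the statement of Theorem~\ref{affine scheme} applies verbatim to subspaces of $\X^*$: for $Z^* \subseteq \X^*$, the pair $(Z^*,\OO_{Z^*})$ is an affine scheme if and only if $Z^*$ is inverse closed in $\X^*$ and $\OO_{Z^*}(Z^*)$ is a Pr\"ufer domain with quotient field $F^*$.

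The key simplification is that the Pr\"ufer hypothesis is automatic here. For any $Z \subseteq \X$ the global sections of $(Z^*,\OO_{Z^*})$ are $\OO_{Z^*}(Z^*) = \bigcap_{V \in Z^*} V = \Kr(Z)$, and by the function-ring results of Halter-Koch recalled at the start of Section~4, $\Kr(Z)$ is \emph{always} a Pr\"ufer domain with quotient field $F^*$. Consequently the second condition of Theorem~\ref{affine scheme} is satisfied for free, and the theorem collapses to the single equivalence that $(Z^*,\OO_{Z^*})$ is an affine scheme if and only if $Z^*$ is inverse closed in $\X^*$. (The irreducibility needed for $\OO_{Z^*}$ to be a sheaf is handled internally by Theorem~\ref{affine scheme} via Proposition~\ref{inverse lrs}, so invoking the theorem as a black box suffices.)

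It then remains to transfer inverse-closedness between $\X^*$ and $\X$. By Proposition~\ref{KFR}(b) the map $\kappa:\X^* \to \X$, $V \mapsto V \cap F$, is a spectral homeomorphism of underlying spaces, hence a homeomorphism in the inverse topology; since $\kappa$ carries $Z^*$ bijectively onto $Z$ (as $V^* \cap F = V$), the set $Z^*$ is inverse closed in $\X^*$ if and only if $Z$ is inverse closed in $\X$. Chaining this with the equivalence of the previous paragraph yields the corollary. I expect the one point genuinely requiring care to be the justification that $\X^*$ is the \emph{full} Zariski-Riemann space of $F^*/\Kr(\X)$ rather than merely a distinguished subset of it, so that Theorem~\ref{affine scheme} may legitimately be invoked for its subspaces; this is exactly the content of $(\X^*)_{\Kr(\X)} = \X^*$ together with the Pr\"ufer property, which forces every valuation overring of $\Kr(\X)$ in $F^*$ to lie in $\X^*$. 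Everything else is bookkeeping with the homeomorphism $\kappa$ and the identity $\OO_{Z^*}(Z^*) = \Kr(Z)$.
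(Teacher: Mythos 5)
Your proof is correct and follows essentially the same route as the paper: both reduce the statement to the affine-scheme characterization (Theorem~\ref{affine scheme}) applied inside $\X^*$ viewed as the Zariski--Riemann space of $F^*/\Kr(\X)$, using that $\Kr(Z)$ is automatically a Pr\"ufer domain with quotient field $F^*$ and that $\kappa$ is a homeomorphism transferring inverse-closedness between $Z^*$ and $Z$. The only cosmetic difference is that the paper routes the forward implication through Proposition~\ref{top prelim}(3) and Corollary~\ref{ZR Prufer cor} rather than observing, as you do, that the Pr\"ufer hypothesis in Theorem~\ref{affine scheme} is satisfied for free.
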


\begin{proof}
If $Z$ is inverse closed, then by Proposition~\ref{KFR}, $Z^*$ is also inverse closed, and hence by Proposition~\ref{top prelim}(3), $Z^* = \X^*_{\Kr(Z)}$, so that by Corollary~\ref{ZR Prufer cor}, $(Z^*,\OO_{Z^*})$ is an affine scheme. Conversely, if $(Z^*,\OO_{Z^*})$  is an affine scheme, 
then by
Theorem~\ref{affine scheme}, $Z^*$ is inverse closed, and hence by Proposition~\ref{KFR}, $Z$ is inverse closed.  
\end{proof}

Motivated by the terminology of \cite[Chapter VII, \S17]{ZS}, we define an {\it affine model} $X$ of $F/D$ to be an affine integral scheme of finite type over $D$ whose function field is a subfield of $F$; equivalently, there is a finitely generated $D$-subalgebra $R$ of $F$ such that $X = \{R_P:P \in \Spec(R)\}$.

\begin{corollary} \label{lim cor}  A nonempty subset $Z $ of $\X$ is affine if and only if $\inv(Z)$ is the projective limit of a projective system of affine models of $F/D$. 
\end{corollary}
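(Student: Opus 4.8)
The plan is to play both implications off against Theorem~\ref{affine scheme} and the standard fact that the functor $\Spec$ turns a filtered direct limit of rings into a projective limit of locally ringed spaces: if $\{R_i\}$ is a directed system of subrings of $F$ with union $R=\varinjlim_i R_i$, then $\Spec(R)=\varprojlim_i \Spec(R_i)$ as locally ringed spaces, the transition maps being those induced by the inclusions $R_i\hookrightarrow R_j$. (This is the limit-preservation of the right adjoint $\Spec$ of the global-sections functor, and since the $R_i$ sit inside $F$ the identification is compatible with the ambient $F$-structure: the stalk at a prime $P$ of $R$ is the directed union $\bigcup_i (R_i)_{P\cap R_i}\subseteq F$.) Throughout set $A=\bigcap_{V\in Z}V$ and recall from Proposition~\ref{top prelim}(4) that $A=\bigcap_{V\in\inv(Z)}V=\OO_{\inv(Z)}(\inv(Z))$.

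For the forward implication, suppose $Z$ is affine, so $A$ is a Pr\"ufer domain with quotient field $F$. By Proposition~\ref{top prelim}(5), $\inv(Z)=\X_A$, and by Theorem~\ref{affine scheme} the locally ringed space $(\inv(Z),\OO_{\inv(Z)})$ is the affine scheme $\Spec(A)$. I would then write $A$ as the directed union $A=\bigcup_i R_i$ of its finitely generated $D$-subalgebras; each $R_i$ is a domain whose quotient field is a subfield of $F$, so $X_i:=\{(R_i)_P:P\in\Spec(R_i)\}$ is an affine model of $F/D$, and the inclusions $R_i\hookrightarrow R_j$ make $\{X_i\}$ a projective system of affine models. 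Since $A=\varinjlim_i R_i$, the limit-preservation of $\Spec$ gives $\varprojlim_i X_i=\Spec(\varinjlim_i R_i)=\Spec(A)=\inv(Z)$ as locally ringed spaces, which is the desired conclusion.

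For the converse, suppose $\inv(Z)=\varprojlim_i X_i$ as locally ringed spaces, with each $X_i=\{(R_i)_P:P\in\Spec(R_i)\}$ an affine model; write $B=\varinjlim_i R_i=\bigcup_i R_i\subseteq F$. By the same limit-preservation fact, $\inv(Z)=\Spec(B)$, and comparing global sections identifies $B$ with $A$, so that $\inv(Z)=\Spec(A)$ as subspaces of $\X$, with each point $V\in\inv(Z)$ equal to the localization $A_P$ at the corresponding prime. By Lemma~\ref{discuss} the stalk of $\OO_{\inv(Z)}$ at $V$ is $V$ itself, so each $A_P=V$ is a valuation ring; hence $A$ is a Pr\"ufer domain. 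Moreover $A\subseteq V\subseteq F$ and $A_P=V$ has quotient field $F$ (as $V\in\X$), so the quotient field of $A$, which equals that of its localization $A_P$, is $F$. Thus $A=\bigcap_{V\in Z}V$ is a Pr\"ufer domain with quotient field $F$, i.e.\ $Z$ is affine. (Alternatively one may invoke Corollary~\ref{ZR Prufer cor}.)

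The step I expect to demand the most care is the precise matching of categories and the $F$-structure: one must confirm that the projective limit of the affine models is computed in the category of locally ringed spaces whose sheaves take values in subrings of $F$, so that the abstract identification $\varprojlim_i\Spec(R_i)=\Spec(\varinjlim_i R_i)$ is realized by genuine equalities of subrings of $F$ rather than merely abstract ring isomorphisms. This $F$-compatibility is exactly what makes the quotient-field computation in the converse valid (an abstract isomorphism $\mathrm{Frac}(A)\cong F$ together with $\mathrm{Frac}(A)\subseteq F$ would not by itself force equality), and it is also what lets one read off in the converse that the stalks of $\inv(Z)$ are the localizations $A_P$, forcing $A$ to be Pr\"ufer.
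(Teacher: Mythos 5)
Your proof is correct and follows essentially the same route as the paper: the forward direction reduces to $\inv(Z)=\Spec(A)$ via Theorem~\ref{affine scheme} and Proposition~\ref{top prelim} and writes $A$ as a directed union of finitely generated $D$-subalgebras, while the converse rests on the fact that a directed projective limit of affine schemes is affine (the paper cites the Stacks Project for exactly the identification $\varprojlim\Spec(R_i)=\Spec(\varinjlim R_i)$ you use). Your only slip is the parenthetical appeal to Corollary~\ref{ZR Prufer cor}, which concerns all of $\X$ rather than the subspace $\inv(Z)$ (Theorem~\ref{affine scheme} is the right reference), but your main argument does not depend on it.
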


\begin{proof}
Suppose that $Z$ is affine. Then by definition, $A=\bigcap_{V \in Z}V$ is a Pr\"ufer domain with quotient field $F$. Since by Proposition~\ref{top prelim}(4), $A = \bigcap_{V \in {\rm{\inv(Z)}}}V$, then by Theorem~\ref{affine scheme}, $\inv(Z)$ is 
 an affine scheme, so that by Proposition~\ref{top prelim}(5), the morphism $\inv(Z) \rightarrow \Spec(A)$ that sends a valuation ring in $\inv(Z)$ to its center on $A$ is an isomorphism of schemes. Since also
 $A$ is the direct limit of finitely generated $D$-subalgebras, it follows that $\inv(Z)$ is the projective limit of the projective system of affine models dominated by $\inv(Z)$.  Conversely, a projective limit of affine schemes is necessarily affine; see for example \cite[Lemma 01YW]{stacks}. 
\end{proof}




%

We  give next a more refined version of Corollary~\ref{lim cor} by showing  that whether a subset $Z$ of $\X$ is affine is determined by whether every projective model is dominated by an affine model that is in turn dominated by $Z$. This is a consequence of the following routine lemma, whose proof we omit.  

\begin{lemma} \label{standard} Let $A$ be a domain with quotient field $F$, and let $t_0,\ldots,t_n$ be nonzero elements in $ F$. Then $(t_0,\ldots,t_n)A$ is an invertible fractional ideal of $A$ if and only if 
 $$A = \sum_{0=1}^n t_i:_A(t_0,\ldots,t_n)A.$$

\end{lemma}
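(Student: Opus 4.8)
The plan is to reduce the stated identity to the standard characterization of invertibility, namely that a nonzero finitely generated fractional ideal $I$ of the domain $A$ is invertible if and only if $I\,I^{-1}=A$, where $I^{-1}=(A:_F I)=\{x\in F:xI\subseteq A\}$. Writing $I=(t_0,\ldots,t_n)A$ (which is nonzero, since each $t_i\ne 0$), this turns the whole lemma into the single assertion that the right-hand side $\sum_{i=0}^n\bigl(t_i:_A I\bigr)$ is exactly the product $I\,I^{-1}$. I would first record the two elementary implications that make the reduction self-contained: if $IJ=A$ for some fractional ideal $J$ then $J\subseteq I^{-1}$, whence $A=IJ\subseteq I\,I^{-1}\subseteq A$ forces $I\,I^{-1}=A$; and conversely $I\,I^{-1}=A$ is by definition a witness to invertibility.

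Next I would expand $I\,I^{-1}=\bigl(\sum_i t_iA\bigr)I^{-1}=\sum_i t_iI^{-1}$, so that it suffices to identify each summand $t_iI^{-1}$ with the colon ideal $t_i:_A I=\{x\in A:xI\subseteq t_iA\}$. For the containment $t_iI^{-1}\subseteq(t_i:_A I)$, take $y\in I^{-1}$: then $(t_iy)I=t_i(yI)\subseteq t_iA$, and moreover $t_iy\in A$ because $t_i\in I$ gives $yt_i\in A$. For the reverse containment, take $x\in A$ with $xI\subseteq t_iA$; then $t_i^{-1}x\in F$ satisfies $(t_i^{-1}x)I\subseteq A$, so $t_i^{-1}x\in I^{-1}$ and $x\in t_iI^{-1}$.

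The one point that repays a little care — and the step I would flag as the crux — is the observation that $t_i\in I$. This is what makes every colon ideal $t_i:_A I$ automatically a subset of $A$, and it pins down the identification $t_iI^{-1}=(t_i:_A I)$ with no ambiguity about whether the colon is computed inside $A$ or inside $F$. Once this identification is in hand, summing over $i$ yields $\sum_{i=0}^n(t_i:_A I)=\sum_{i=0}^n t_iI^{-1}=I\,I^{-1}$; since each term lies in $A$, the sum is an ideal of $A$, and the equality $A=\sum_{i=0}^n(t_i:_A I)$ holds precisely when $I\,I^{-1}=A$, i.e.\ precisely when $(t_0,\ldots,t_n)A$ is invertible. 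No genuinely hard estimate is involved; the argument is bookkeeping around the colon notation together with the integrality forced by $t_i\in I$.
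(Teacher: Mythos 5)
Your proof is correct. Note that the paper gives no proof of this lemma at all --- it is introduced as ``the following routine lemma, whose proof we omit'' --- so there is nothing to compare against except the intended routine argument, which is precisely what you supply: the identification $t_i I^{-1} = (t_i :_A I)$ for $I=(t_0,\ldots,t_n)A$, using $t_i\in I$ to see that $t_iI^{-1}\subseteq A$, followed by $\sum_i t_iI^{-1}=II^{-1}$ and the standard criterion $II^{-1}=A$. One trivial quibble: $t_i:_A I$ is a subset of $A$ by the definition of $:_A$; what $t_i\in I$ actually buys you (and what you correctly use in your second paragraph) is that $t_iI^{-1}$ lands inside $A$, so the colon computed in $F$ agrees with the one computed in $A$ and the identification is an equality rather than a one-sided inclusion.
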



\begin{theorem} \label{geom Prufer} The following are equivalent for a subset $Z$ of $\X$. 

\begin{itemize}

\item[{\em (1)}] $Z$ is affine.

\item[{\em (2)}] For every projective model $X$ of $F/D$, there exists an affine model $Y$ of $F/D$ dominating $X$ and dominated by $Z$. 

\item[{\em (3)}] For every projective model $X$ of $F/D$ defined by $2$ elements of $F$, there exists an affine model $Y$ of $F/D$  dominating $X$ and dominated by $Z$.

\end{itemize}
\end{theorem}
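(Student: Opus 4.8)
The plan is to prove the cycle $(1)\Rightarrow(2)\Rightarrow(3)\Rightarrow(1)$. Throughout, write $A=\bigcap_{V\in Z}V$, and read ``$Y$ is dominated by $Z$,'' for an affine model $Y=\{R_P:P\in\Spec(R)\}$, as the assertion that every $V\in Z$ has a center on $Y$ --- equivalently that $R\subseteq A$, so that the domination morphism from the subspace $Z$ to $Y$ exists (this is the only sense in which a single valuation ring, say, can dominate a positive-dimensional affine chart). The implication $(2)\Rightarrow(3)$ is then immediate, since a projective model defined by two elements is in particular a projective model, so the conclusion of (2) applies to it verbatim.

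For $(1)\Rightarrow(2)$, suppose $Z$ is affine, so $A$ is a Pr\"ufer domain with quotient field $F$, and let $X$ be a projective model defined by $f_0,\dots,f_n\in F$ with charts $D_i=D[\frac{f_0}{f_i},\dots,\frac{f_n}{f_i}]$. Since $A$ is Pr\"ufer, the fractional ideal $I=(f_0,\dots,f_n)A$ is invertible, so there is a relation $\sum_k b_kf_k=1$ with each $b_k\in(A:_A I)$; in particular $c_{kj}:=b_kf_j\in A$ for all $k,j$. I would then set $R=D[\,c_{kj}:k,j\,]$, a finitely generated $D$-subalgebra of $A$, and take $Y=\Spec(R)$. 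Because $\sum_k c_{kk}=1$, for each prime $Q$ of $R$ some $c_{kk}=b_kf_k$ is a unit in $R_Q$, whence $\frac{f_j}{f_k}=c_{kj}/c_{kk}\in R_Q$ for every $j$; thus $D_k\subseteq R_Q$ and $R_Q$ dominates the corresponding point of $X$. Hence $\Spec(R)$ dominates $X$, while $R\subseteq A$ shows $Y$ is dominated by $Z$. (Alternatively one may invoke Corollary~\ref{lim cor}, which realizes $\inv(Z)=\X_A$ as the projective limit of the affine models it dominates, and then factor the domination $\X_A\to X$ through a finite stage by a limit argument.)

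The heart of the matter is $(3)\Rightarrow(1)$, where I use the classical fact that a domain is Pr\"ufer exactly when each of its two-generated ideals is invertible --- precisely the reason (3) may restrict to models defined by two elements. First, for the quotient field, apply (3) to the model defined by $1,t$ (for $t\in F$, $t\ne0$) to get an affine model $Y=\Spec(R)$ dominating it with $R\subseteq A$; inspecting the generic point of $\Spec(R)$, whose stalk is $\mathrm{Frac}(R)$ and which must dominate the generic point of $X$, gives $\mathrm{Frac}(D)(t)\subseteq\mathrm{Frac}(R)\subseteq\mathrm{Frac}(A)\subseteq F$, so $t\in\mathrm{Frac}(A)$ and hence $\mathrm{Frac}(A)=F$. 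Next, fix nonzero $a,b\in A$ and apply (3) to the model $X$ defined by $a,b$, obtaining $Y=\Spec(R)$ dominating $X$ with $R\subseteq A$. Domination forces, for every prime $Q$ of $R$, either $\frac ba\in R_Q$ or $\frac ab\in R_Q$; identifying $\{Q:\frac ba\in R_Q\}$ with $\{Q:(aR:_Rb)\not\subseteq Q\}$, this says no prime contains both $(aR:_Rb)$ and $(bR:_Ra)$, so these ideals sum to $R$ and we obtain $1=u+v$ with $ub\in aR$ and $va\in bR$. Since $R\subseteq A$ this lifts to $1=u+v$ with $u\in(aA:_Ab)$ and $v\in(bA:_Aa)$, i.e.\ $A=(a:_A(a,b)A)+(b:_A(a,b)A)$; by Lemma~\ref{standard}, $(a,b)A$ is invertible. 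Thus every two-generated ideal of $A$ is invertible, $A$ is a Pr\"ufer domain with quotient field $F$, and $Z$ is affine.

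I expect the main obstacle to be the passage $(3)\Rightarrow(1)$, and within it the careful transport of the partition-of-unity relation $1=u+v$ from $R$ to $A$: one must verify that the ideal quotients computed over $R$ (governing which chart of $X$ a local ring $R_Q$ dominates) push forward correctly to the quotients over $A$ demanded by Lemma~\ref{standard}, and that $R\subseteq A$ is exactly the hypothesis making this transport legitimate. Secondary care is needed to pin down the interpretation of ``dominated by $Z$'' as $R\subseteq A$ and to invoke cleanly the reduction of the Pr\"ufer property to two-generated ideals, which is what licenses the restriction in (3) to projective models defined by two elements.
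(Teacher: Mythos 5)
Your proposal is correct, and its cycle $(1)\Rightarrow(2)\Rightarrow(3)\Rightarrow(1)$ together with the proof of $(1)\Rightarrow(2)$ --- invertibility of $(f_0,\ldots,f_n)A$, a partition of unity $1=\sum_k b_kf_k$ with all products $b_kf_j\in A$, and the finitely generated $D$-subalgebra $R$ generated by these products --- matches the paper's argument essentially verbatim (one notational quibble: your $b_k$ lie in $(A:I)\subseteq F$, not in $A$ as the notation $(A:_AI)$ suggests, but this is harmless since only the $c_{kj}$ enter the construction). Where you genuinely diverge is in $(3)\Rightarrow(1)$. The paper argues locally and directly: applying (3) to the model defined by $\{1,t\}$ produces $R\subseteq A$ with each $R_Q$ containing $t$ or $t^{-1}$, whence $t$ or $t^{-1}$ lies in $A_P$ for every prime $P$ of $A$; since $t\in F$ is arbitrary, this shows in one stroke that each $A_P$ is a valuation domain with quotient field $F$. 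You instead establish $\mathrm{Frac}(A)=F$ separately via the generic point of $\Spec(R)$, then show each two-generated ideal $(a,b)A$ is invertible by transporting the comaximality of $(aR:_Rb)$ and $(bR:_Ra)$ up to $A$ and invoking Lemma~\ref{standard}, and finally appeal to the classical theorem that a domain whose nonzero two-generated ideals are invertible is Pr\"ufer. Both routes are valid. The paper's is shorter and self-contained --- it needs only the definition of a Pr\"ufer domain via localizations and gets the quotient field statement for free --- whereas yours imports the two-generator characterization of Pr\"ufer domains from outside the paper; on the other hand, your version makes explicit why the restriction in (3) to models defined by two elements is exactly the right economy, since it mirrors the two-generated-ideal test.
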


\begin{proof} (1) $\Rightarrow$ (2) 
Suppose that $Z$ is an affine subset of $\X$, and let $X$ be a projective model of $F/D$ defined by $t_0,\ldots,t_n \in F$ (as in Section 3). Let $A = \bigcap_{V \in Z}V$.  Then since $A$ is a Pr\"ufer domain, $(t_0,\ldots,t_n)A$ is an invertible fractional ideal of $A$, so we have  by Lemma~\ref{standard} that  $$A = \sum_{0=1}^n t_i:_A(t_0,\ldots,t_n)A.$$ Thus there exist $b_1,\ldots,b_n \in A$ and $a_{ij} \in A$, $i,j=1,\ldots,n$,  such that \begin{center} $(\dagger) \:\:\:\:\: \:\:\:\:\:\:\:\:\:\: 1 = b_1 + \cdots +b_n$ and for each $i$, $b_i = a_{i1}\frac{t_i}{t_1}=\cdots=a_{in}\frac{t_i}{t_n}.$\end{center}
 Define $R = D\left[\{b_1,\ldots,b_n\} \cup \{a_{ij}:i,j=1,\ldots,n\}\right].$ Then $R$ is a finitely generated $D$-subalgebra of $A$. Let $Y =\{R_P:P \in \Spec(R)\}$, so that $Y$ is an affine submodel of $F/D$. Since $R \subseteq A$, then $Z$ dominates $Y$. We claim that $Y$ dominates $X$. Indeed, let $P$ be a prime ideal of $R$.  Observe that from ($\dagger$) and the definition of $R$, it follows that  $R = \sum_{0=1}^n t_i:_R(t_0,\ldots,t_n)R.$ Thus there exists $i=1,\ldots,n$ such that 
 $R_P =  t_i:_{R_P}(t_0,\ldots,t_n)R_P.$ Consequently, $D[\frac{t_0}{t_i},\ldots,\frac{t_n}{t_i}] \subseteq  R_P$, and since the projective model $X$ is defined by $t_0,\ldots,t_n$, we conclude that $R_P$ dominates a local ring in $X$. Therefore, $Y$ dominates $X$.      
   
   (2) $\Rightarrow$ (3) This is clear. 
   
   (3) $\Rightarrow$ (1) 
   Assuming (3),   we claim that $A=\bigcap_{V \in Z}V$ is a Pr\"ufer domain with quotient field $F$. It suffices to show that for each prime ideal $P$ of $A$ and $0 \ne t \in F$, $t$ or $t^{-1} \in A_P$.  
    Let $P$ be a prime ideal of $A$ and let $0 \ne t \in F$. Let $X$ be the projective model of $F/D$ defined by $\{1,t\}$.  By assumption there is a finitely generated $D$-subalgebra $R$ of $F$ such that $Z$ dominates  the affine model $Y=\{R_Q:Q \in \Spec(R)\}$ (equivalently, $R \subseteq A$) while $Y$  dominates $X$.   
 Since $Y$ dominates  $X$ and $X$ consists of the localizations of $D[t]$ and $D[t^{-1}]$ at prime ideals, it follows that $R_{P \cap R}$, hence $A_P$, contains either $t$ or $t^{-1}$. Since the choice of $t$ was arbitrary, we conclude that $A_P$ is a valuation domain with quotient field $F$, and hence $A$ is a Pr\"ufer domain with quotient field $F$.
\end{proof}

\begin{corollary} A   domain $A$ with quotient field $F$ is a Pr\"ufer domain if and only if every projective model of $F/A$ is affine. 
\end{corollary}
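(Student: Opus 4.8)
The plan is to read this as the specialization of Theorem~\ref{geom Prufer} to the subset $Z = \X$ of the Zariski--Riemann space of $F/A$, combined with the blow-up interpretation of a projective model. Since a Pr\"ufer domain is integrally closed in its quotient field, in the forward direction $A$ coincides with $\bigcap_{V \in \X} V$, and the delicate point in the converse will be to recover this integral-closedness from the affineness hypothesis.

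For the forward implication I would argue directly. Suppose $A$ is a Pr\"ufer domain with quotient field $F$, and let $X$ be a projective model of $F/A$, realized as $X = \bigcup_i \Spec(A[\tfrac{t_0}{t_i},\ldots,\tfrac{t_n}{t_i}])$ for some $t_0,\ldots,t_n \in F$, as in Section~\ref{Zariski-Riemann section}. Because $A$ is Pr\"ufer, the finitely generated fractional ideal $(t_0,\ldots,t_n)A$ is invertible, so Lemma~\ref{standard} gives $A = \sum_i t_i :_A (t_0,\ldots,t_n)A$. The computation in the implication (1) $\Rightarrow$ (2) of Theorem~\ref{geom Prufer}, run with $R = A$, then shows that each chart $\Spec(A[\tfrac{t_j}{t_i}])$ is a localization of $A$ and that these charts glue back to $\Spec A$; equivalently, blowing up the invertible ideal $(t_0,\ldots,t_n)A$ is an isomorphism, so $X \cong \Spec A$ is affine.

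For the converse, suppose every projective model of $F/A$ is affine. Then each projective model $X$ is itself an affine model of $F/A$ which trivially dominates itself and is dominated by $\X$, so condition (2) of Theorem~\ref{geom Prufer} holds for $Z = \X$; hence $\X$ is affine, and $\bigcap_{V \in \X} V$ is a Pr\"ufer domain with quotient field $F$. To finish I must identify this intersection with $A$, i.e.\ show $A$ is integrally closed in $F$. Here I would use the reduction to models defined by two elements afforded by condition (3) of Theorem~\ref{geom Prufer}: for $a,b \in A$ I would test the invertibility of $(a,b)A$ against the affineness of the projective model defined by $\{a,b\}$, using Lemma~\ref{standard} to pass from the global sections of that model back to a relation in $A$.

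The main obstacle is precisely this last step of the converse. Applied to $Z = \X$, Theorem~\ref{geom Prufer} yields directly only that the integral closure $\overline{A}$ of $A$ is Pr\"ufer, and the substance of the corollary is to see that affineness of \emph{every} projective model already forces $A = \overline{A}$ and the invertibility of every two-generated ideal. Geometrically, a non-invertible finitely generated ideal should produce a projective model whose exceptional fibre is positive-dimensional (for instance a projective line over a residue field), and such a model cannot be affine; making this precise, and controlling the passage between the ring $A$ and the ring of global sections of the model, is the technical heart of the argument.
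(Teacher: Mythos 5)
Your forward direction is correct, though it travels a different road from the paper's: rather than invoking invertibility of $(t_0,\ldots,t_n)A$ together with Lemma~\ref{standard} and the fact that blowing up an invertible ideal is an isomorphism, the paper argues directly that each local ring $\OO_{X,x}$ of a projective model $X$ of $F/A$ is a quasilocal overring of the Pr\"ufer domain $A$, hence a valuation ring and a localization of $A$, and that conversely every valuation overring of $A$ dominates, hence equals, some $\OO_{X,x}$; thus $X=\Spec(A)$. Both arguments are sound, and yours is a reasonable alternative.

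The genuine gap is in the converse, and you have diagnosed it accurately but not closed it. As you observe, applying Theorem~\ref{geom Prufer} with $Z=\X$ yields only that $\bigcap_{V\in\X}V$, i.e.\ the integral closure $\overline{A}$ of $A$ in $F$, is a Pr\"ufer domain with quotient field $F$; your proof stops at the step $A=\overline{A}$, and the strategy you sketch for it (extracting a non-affine model from a non-invertible two-generated ideal) cannot succeed without a further hypothesis. Indeed, every projective model of $F/A$ is the blowup of a nonzero finitely generated fractional ideal, and when $A$ is a one-dimensional Noetherian domain the fibres of such a blowup over $\Spec(A)$ are zero-dimensional, so the model is projective and quasi-finite over $\Spec(A)$, hence finite, hence affine. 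Taking $A=k[t^2,t^3]\subseteq F=k(t)$ therefore gives a domain all of whose projective models are affine but which is not integrally closed and so not Pr\"ufer; only $\overline{A}=k[t]$ is. Thus the missing step is not a technicality you could supply with more effort: the equivalence requires either the standing assumption that $A$ is integrally closed in $F$ or a restatement of the conclusion in terms of $\overline{A}$ (compare Corollary~\ref{ZR Prufer cor}). Be aware that the paper's own proof of this direction consists of a single appeal to Theorem~\ref{geom Prufer} and elides exactly the point you flagged, so your analysis, while incomplete as a proof, has correctly located the delicate spot.
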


\begin{proof} If $A$ is a Pr\"ufer domain and $X$ is a projective model of $F/A$, then $X$ consists of valuation rings between $A$ and $F$, and hence each member of $X$ is a localization of $A$. Also, if $V$ is a valuation ring between $A$ and $F$, then $V$ dominates $\OO_{X,x}$ for some $x \in X$. But $\OO_{X,x}$ is a valuation domain, so it follows that $V = \OO_{X,x} \in X$. Thus $X = \Spec(A)$. Conversely,   
if every projective model of $F/A$ is affine, then by Theorem~\ref{geom Prufer}, $A$ is a Pr\"ufer domain. 
\end{proof}

\smallskip

{\it Acknowledgement.} I thank Marco Fontana for pointing out to me a mistake in an earlier version of the article. 

\medskip


\end{document}